\newtheorem{theorem}{Theorem}[section]
\newtheorem{lemma}[theorem]{Lemma}
\newtheorem{proposition}[theorem]{Proposition}
\newtheorem{corollary}[theorem]{Corollary}
\theoremstyle{definition}
\newtheorem{definition}[theorem]{Definition}
\newtheorem{remark}[theorem]{Remark}
\newtheorem{ipotesi}[theorem]{Assumption}
\newtheorem{notazioni}[theorem]{Notation}
\numberwithin{equation}{section}
\numberwithin{subsection}{section}
\newcommand{\Z}{\mathbb{Z}}
\newcommand{\R}{\mathbb{R}}
\newcommand{\C}{\mathbb{C}}
\newcommand{\Sf}{\mathbb{S}}
\newcommand{\N}{\mathcal{N}}
\newcommand{\A}{\mathcal{A}}
\newcommand{\G}{\mathcal{G}}
\newcommand{\Dir}{\mathrm{Dir}}
\newcommand{\E}{\mathscr{E}}
\newcommand{\cS}{\mathcal{S}}
\newcommand{\Ha}{\mathcal{H}}
\newcommand{\e}{\varepsilon}
\newcommand{\spt}{\mathrm{spt}}
\newcommand{\dist}{\mathrm{dist}}
\newcommand{\Lip}{\mathrm{Lip}}
\def\XXint#1#2#3{{\setbox0=\hbox{$#1{#2#3}{\int}$ }
\vcenter{\hbox{$#2#3$ }}\kern-.6\wd0}}
\newcommand{\mres}{\mathbin{\vrule height 1.6ex depth 0pt width 
0.13ex\vrule height 0.13ex depth 0pt width 1.3ex}}
\DeclareMathOperator{\reg}{reg}
\DeclareMathOperator{\sing}{sing}
\DeclareMathOperator*{\esssup}{ess\,sup}
\def\I#1{{\mathcal{A}}_{#1}}
\newcommand{\Iq}{{\mathcal{A}}_Q}
\def\a#1{\left\llbracket{#1}\right\rrbracket}
\newcommand{\abs}[1]{\lvert#1\rvert}
\newcommand{\Abs}[1]{\left\lvert#1\right\rvert}
\newcommand{\norm}[1]{\left\lVert#1\right\rVert}
\newcommand{\ps}[2]{\left\langle#1,#2\right\rangle}
\newcommand{\ton}[1]{\left(#1\right)}
\newcommand{\qua}[1]{\left[#1\right]}
\newcommand{\cur}[1]{\left\{#1\right\}}
\newcommand{\Vol}{\text{Vol}}
\newcommand{\B}[2]{B_{#1}\ton{#2}}
\newcommand{\cD}{\mathcal{D}}
\newcommand{\cF}{\mathcal{F}}
\newcommand{\cH}{\mathcal{H}}
\newcommand{\Na}{\mathbb{N}}
\renewcommand{\P}{\mathcal{P}}
\newcommand{\Pinch}{\mathcal{W}}
\newcommand{\cB}{\mathcal{B}}
\newcommand{\cG}{\mathcal{G}}
\newcommand{\cR}{\mathcal{R}}
\newcommand{\cW}{F}
\renewcommand{\epsilon}{\e}
\title[The singular set of multiple valued energy minimizing harmonic maps]{Rectifiability of the singular set of multiple valued energy minimizing harmonic maps}
\author{Jonas Hirsch \and Salvatore Stuvard \and Daniele Valtorta}
\newcommand{\Addresses}{{
  \bigskip
  \footnotesize

  J. H., \textsc{Scuola Internazionale Superiore di Studi Avanzati, via Bonomea, 265, 34136 Trieste, \textsc{Italy}}
  \par\nopagebreak

\textit{E-mail address}, J. H.: \texttt{\href{jonas.hirsch@sissa.it}{jonas.hirsch@sissa.it}}

\bigskip

  S. S. \and D. V., \textsc{Universit\"at Z\"urich, Winterthurerstrasse 190, CH-8057 Z\"urich, \textsc{Switzerland}}
  \par\nopagebreak
  
  \textit{E-mail address}, S. S.: \texttt{\href{salvatore.stuvard@math.uzh.ch}{salvatore.stuvard@math.uzh.ch}}

  \medskip

  \textit{E-mail address}, D. V.: \texttt{\href{daniele.valtorta@math.uzh.ch}{daniele.valtorta@math.uzh.ch}}
   
}}
\begin{document}
 
\begin{abstract}

In this paper we study the singular set of Dirichlet-minimizing $Q$-valued maps from $\R^m$ into a smooth compact manifold $\N$ without boundary. Similarly to what happens in the case of single valued minimizing harmonic maps, we show that this set is always $(m-3)$-rectifiable with uniform Minkowski bounds. Moreover, as opposed to the single valued case, we prove that the target $\N$ being non-positively curved but not simply connected does not imply continuity of the map.

\vspace{4pt}
\noindent \textsc{Keywords:} Q-valued functions, harmonic maps, singular set, rectifiability, Reifenberg theorem, quantitative stratification.

\vspace{4pt}
\noindent \textsc{AMS subject classification (2010):} 49Q20, 58E20.
\end{abstract}

\maketitle

\section{Introduction}

Multiple-valued harmonic functions ($\Dir$-minimizers) were originally introduced by Almgren in \cite{almgren_big} as first order approximations for the branching singularities of minimal surfaces in codimension higher than one. Roughly speaking, a $Q$-valued Dirichlet minimizer is a function which attains $Q$ different values (counted with multiplicity) for each point in the domain, and minimizes a suitably defined Dirichlet energy with respect to boundary data.

Even though at first sight it might seem that $Q$-valued functions are easy generalizations of classical (single valued) functions, there are some crucial differences. For instance, the space of such functions is not linear, in the sense that the sum of two $Q$-valued function is not a well-defined notion. These differences make the study of such objects both more complicated than their classical counterpart and more interesting. For a recent survey on results of this kind, we direct the reader to \cite{DLS11a}, where the authors revisit Almgren's original regularity theory of $\Dir$-minimizing $Q$-valued functions suggesting a more intrinsic approach which has its roots in the new techniques developed in the last two decades to perform analysis on metric spaces.

Several generalizations of the original $Q$-valued $\Dir$-minimizing functions have been studied in literature, both in the direction of analyzing multiple-valued functions taking values in more general target spaces than $\R^n$ and in the sense of more general functionals to minimize. Here we limit ourselves to mentioning some of these works. In the direction of functionals more general than the Dirichlet energy there are the works \cite{Mattila,J6}, as well as the recent work \cite{SS17b} by one of the authors, where a complete multi-valued theory for the stability operator is studied. The papers \cite{J7,J11,BDPW,J3,SS17a} focus instead on more general target spaces.

The work \cite{Hir16} of one of the authors started analyzing $Q$-valued harmonic maps into compact Riemannian manifolds, introducing the appropriate definitions and developing the basic continuity theory for such objects. In particular, using a suitably modified version of Federer-Almgren's dimension-reduction argument, \cite{Hir16} proves that 
\begin{theorem}[{\cite[Theorem 0.1]{Hir16} }]
Given a smooth compact Riemannian manifold $\N$ and a $Q$-valued map $u:\Omega\subset \R^m \to \A_{Q}(\N)$ locally minimizing the Dirichlet energy, the singular set of $u$ defined as
\begin{gather}
 \sing_{H}(u):=\cur{x\in \Omega \ \ s.t. \ \ u \ \text{is not continuous in a neighborhood of} \ x}
\end{gather}
is a closed set of Hausdorff dimension at most $m-3$. Moreover, outside this set the map $u$ is locally $C^{0,\alpha}$ continuous, with $\alpha=\alpha(m,Q)>0$. 
\end{theorem}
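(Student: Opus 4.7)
The plan is to adapt Schoen-Uhlenbeck's $\e$-regularity theory for single-valued minimizing harmonic maps to the multi-valued setting, combining it with Almgren's machinery for $Q$-valued Dirichlet minimizers. The argument consists of three main steps: a monotonicity formula, an $\e$-regularity theorem, and Federer-Almgren dimension reduction.

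First I would derive the standard normalized energy monotonicity: for a locally minimizing $u$, the function $r \mapsto r^{2-m} \int_{B_r(x)} |Du|^2$ is non-decreasing, with the derivative controlling the non-radial part of $|Du|^2$. The proof is the usual one, based on stationarity of $u$ with respect to inner variations (equivalently, the fact that composing $u$ with a diffeomorphism of the domain cannot decrease the energy). This yields an upper semicontinuous density $\Theta_u(x) = \lim_{r \downarrow 0} r^{2-m} \int_{B_r(x)} |Du|^2$ and, coupled with $W^{1,2}$-compactness of $\Dir$-minimizers (carried over from Almgren's theory), allows the extraction of tangent maps by blow-up. A tangent map at $x$ is a $0$-homogeneous $Q$-valued $\Dir$-minimizer into the tangent space $T_{p(x)}\N$ (where $p(x)$ is the accumulation point of the sheets meeting at $x$, to which one reduces via isometric embedding $\N \hookrightarrow \R^k$).

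Next, prove the crucial $\e$-regularity statement: there exist $\e_0 = \e_0(m, Q, \N) > 0$ and $\alpha = \alpha(m, Q) > 0$ such that whenever $r^{2-m} \int_{B_r(x)} |Du|^2 \leq \e_0$, one has $u \in C^{0,\alpha}(B_{r/2}(x))$. The strategy is to work with the embedded Euler-Lagrange equations, which take the multi-valued form $\Delta u = A(u)(Du, Du)$ with $A$ the second fundamental form of $\N$. Combining Almgren's Lipschitz approximation and bi-Lipschitz realization of $\A_Q(\R^k)$ with the test-function and hole-filling arguments used in the classical theory of harmonic maps gives small-energy continuity. Hölder regularity with a universal exponent then follows from a geometric energy-decay iteration, which exploits the monotonicity to propagate smallness down the scales.

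Finally, the Federer-Almgren dimension reduction. Define the standard stratification $\cS^j(u)$ as the set of points with no $(j+1)$-symmetric tangent map; the abstract reduction scheme yields $\dim_{\Ha}(\cS^j) \leq j$. The inclusion $\sing_H(u) \subseteq \cS^{m-3}$ is then obtained by proving that a tangent map invariant under an $(m-2)$-plane reduces to a $0$-homogeneous $Q$-valued $\Dir$-minimizer on $\R^2$, which by the $2$-dimensional base case must be constant; a vanishing tangent map together with the monotonicity forces the $\e$-regularity threshold to be satisfied at sufficiently small scales, yielding continuity. I expect the main obstacle to be the $\e$-regularity step in the simultaneous presence of the multi-valued structure and the curved target: the classical single-valued arguments (reflection, Hélein moving frames) do not transfer directly to $\A_Q(\N)$-valued maps because one cannot sum sheets, so one must treat the interaction between branching singularities and the second fundamental form of $\N$ intrinsically, and prove the two-dimensional base case via a Courant-Lebesgue / hole-filling argument adapted to the $Q$-valued framework.
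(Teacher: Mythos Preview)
Your overall strategy is correct and follows the same path as the paper (which in turn summarizes the argument of \cite{Hir16}): monotonicity from inner variations, compactness, existence of $0$-homogeneous tangent maps, an $\e$-regularity theorem in the style of Schoen--Uhlenbeck, and Federer--Almgren dimension reduction. The paper's sketch differs from yours only in how the inclusion $\sing_H(u) \subseteq \cS^{m-3}(u)$ is obtained: rather than reducing $(m-2)$-symmetric tangent maps to the $2$-dimensional base case, the paper observes that the spine $S(\phi)$ of any tangent map $\phi$ at a singular point lies inside $\sing_H(\phi)$, and since $\Ha^{m-2}(\sing_H(\phi)) = 0$ (a direct consequence of $\e$-regularity plus a covering argument), the linear subspace $S(\phi)$ must have dimension at most $m-3$. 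Both routes are standard and essentially equivalent.

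There is, however, one genuine error in your description of tangent maps. You write that a tangent map is ``a $0$-homogeneous $Q$-valued $\Dir$-minimizer into the tangent space $T_{p(x)}\N$''. This is incorrect: tangent maps here are obtained by rescaling only the \emph{domain}, via $T^u_{x,r}(y) := u(x + ry)$, and the target is not rescaled. Hence every tangent map is a $0$-homogeneous $Q$-valued energy minimizer \emph{into $\N$ itself}, exactly as in the classical Schoen--Uhlenbeck theory; see the paper's discussion of tangent maps following the Compactness Theorem. There is also no well-defined ``accumulation point $p(x)$ of the sheets'': the $Q$ values of $u$ at $x$ need not collapse to a single point of $\N$. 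You appear to be conflating this setting with Almgren's linear theory of $Q$-valued $\Dir$-minimizers into $\R^n$, or with tangent cones for area-minimizing currents, where the target does get linearized in the blow-up. This confusion is not fatal to your outline---your $\e$-regularity and dimension-reduction steps are correctly stated for $\N$-valued maps---but it should be corrected, since the fact that tangent maps still take values in the curved compact target $\N$ is precisely what distinguishes this problem from the flat case and is the reason the $\e$-regularity step requires genuine work.
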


Note that this is the counterpart of the classical Schoen-Uhlenbeck results in \cite{SU} for the singularities of (single valued) harmonic maps between manifolds. The first goal of this work is to improve this result and give $(m-3)$-rectifiability for the singular set $\sing_{H}(u)$ along with uniform $(m-3)$ Minkowski bounds. In particular, we want to show that
\begin{theorem}\label{th_appetizer}
 Given a Dirichlet-minimizing $Q$-valued map $u:\B 2 0 \subseteq \R^m \to \A_{Q}(\N)$ with energy bounded by $\Lambda$, if $B_{r}\left( \sing_{H}(u) \right) := \bigcup_{x \in \sing_{H}(u)} B_{r}(x)$ then we have
 \begin{gather}
  \Vol\ton{\B r {\sing_{H}(u)\cap \B 1 0}}\leq C r^{3}\, ,
 \end{gather}
 where $C = C(m,\N,Q,\Lambda)$. Moreover, $\sing_{H}(u)$ is $(m-3)$-rectifiable.
\end{theorem}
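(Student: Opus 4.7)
The strategy is a quantitative stratification in the spirit of Naber--Valtorta, coupled with the discrete Reifenberg theorem for upper Ahlfors regular measures. This approach has been successful both for the singular set of single-valued energy minimizing harmonic maps and for $\R^n$-valued $\Dir$-minimizing $Q$-valued functions; here one adapts it to the combined setting of multi-valued maps into a compact Riemannian target.

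First I would introduce the quantitative singular strata. For $\eta>0$ and $r\in(0,1)$, call a point $x\in\B 1 0$ \emph{$(k,\eta,r)$-symmetric} if there exist a base point $p\in\N$ and a $k$-symmetric $0$-homogeneous $Q$-valued cone $h\colon\R^m\to\Iq(T_p\N)$ with axis of symmetry a $k$-dimensional subspace $V\subset\R^m$, such that
\[
\dashint_{\B r x}\G\bigl(u,\exp_p\!\circ h(\cdot-x)\bigr)^2\,dy\ \leq\ \eta^2\,r^2\dashint_{\B r x}|Du|^2\,dy.
\]
Set $\cS^k_{\eta,r}:=\{x\in\B 1 0:u\text{ is not }(k+1,\eta,s)\text{-symmetric for any }s\in[r,1]\}$ and $\cS^k_\eta:=\bigcap_{r>0}\cS^k_{\eta,r}$. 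A standard compactness-and-tangent-map argument, together with the Federer--Almgren dimension reduction already used in \cite{Hir16}, shows that every singular point lies in some $\cS^{m-3}_\eta$; the theorem therefore reduces to proving the Minkowski bound and rectifiability for each $\cS^{m-3}_\eta$ separately.

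The engine of the argument is Almgren's frequency $I(x,r)$, which one shows is monotone non-decreasing for $\Dir$-minimizers (after composing with an isometric embedding $\N\hookrightarrow\R^N$, monotonicity holds up to lower-order curvature errors that are harmless at small scales). The frequency pinching $\Pinch_s(x):=I(x,2s)-I(x,s)$ quantifies the distance of $u$ from being $0$-homogeneous on $\B{2s}{x}$, controlling the $L^2$-deviation from cones via the Euler-equation computation. A quantitative cone-splitting lemma --- if $u$ is approximately conical at $k+1$ points of $\B r x$ which effectively span a $k$-plane, then $u$ is $(k,\e,r)$-symmetric at $x$ --- then produces the best-approximation estimate
\[
\inf_{L^{m-3}}\int_{\B r x}\frac{\dist(z,L)^2}{r^{2}}\,d\mu(z)\ \leq\ C\int_{\B r x}\bigl(I(z,16r)-I(z,r/16)\bigr)\,d\mu(z),
\]
valid for any finite Borel measure $\mu$ supported in $\cS^{m-3}_\eta\cap\B r x$. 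Feeding this bound into the discrete Reifenberg theorem of Naber--Valtorta, applied scale by scale to maximal $r$-separated subsets of $\cS^{m-3}_\eta\cap\B 1 0$ and noting that the total integrated pinching at each scale telescopes to a quantity bounded by $\Lambda$ times the ball count, one obtains the desired $(m-3)$ Minkowski bound and, letting $r\to 0$, the $(m-3)$-rectifiability.

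The main obstacle is the best-approximation estimate in the multi-valued manifold-valued setting. The target $\Iq(\N)$ is not a linear space, so tangent cones at distinct base points live in different tangent spaces $T_p\N$; comparing them at nearby points requires transplanting via the exponential map and tracking the $O(r^2)$ curvature error. At the same time the cone-splitting has to be stated and proved intrinsically in the Almgren metric $\G$, which forces a delicate quantitative rigidity statement: if $\Pinch_s$ is essentially zero at $k+1$ points of $\B r x$ in effectively general position, then $u$ is forced to be translation-invariant along their affine span, up to small $L^2$ errors. Once these ingredients are assembled the remaining Reifenberg step is by now quite standard.
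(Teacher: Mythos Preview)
Your high-level strategy---quantitative stratification plus the Naber--Valtorta discrete Reifenberg machinery---is exactly the approach the paper takes. However, two of your technical choices diverge from the paper in ways that matter.

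First, and most importantly, you build the argument on Almgren's frequency $I(x,r)$, whereas the paper works with a mollified rescaled energy
\[
\theta_u(x,r)=r^{2-m}\int\varphi\!\left(\frac{|x-y|}{r}\right)|Du(y)|^2\,\mathrm{d}y.
\]
For $\Iq(\R^n)$-valued $\Dir$-minimizers the frequency is the right monotone quantity, but for $\N$-valued minimizers the outer variation carries the second fundamental form term $A_{u_\ell}(D_iu_\ell,D_iu_\ell)$, and there is no reason for the frequency to be monotone, even approximately; your claim that ``monotonicity holds up to lower-order curvature errors that are harmless at small scales'' is not justified. By contrast, the rescaled energy is \emph{exactly} monotone for any stationary $Q$-harmonic map because its monotonicity comes solely from the inner variation formula. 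The pinching that drives the whole argument in the paper is $\theta(x,2r)-\theta(x,r)$, not $I(x,2r)-I(x,r)$.

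Second, your definition of $(k,\eta,r)$-symmetry via $L^2$ closeness to a cone $\exp_p\!\circ h$ transplanted through the exponential map is both more complicated than needed and different from what the paper does. The paper introduces a new notion: $B_r(x)$ is $(k,\e)$-symmetric if $\theta(x,2r)-\theta(x,r)<\e$ \emph{and} there is a $k$-plane $L$ with $r^{2-m}\int_{B_r(x)}|D_Lu|^2\le\e$. This bypasses any comparison map entirely, makes the best-approximation estimate (your displayed inequality) a direct linear-algebra computation on the quadratic form $R(w)=\int\langle x-x_m,w\rangle^2\,\mathrm{d}\mu$, and removes the need to track curvature errors from $\exp_p$. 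The paper shows this new definition is equivalent (for minimizers) to the Cheeger--Naber one you wrote down, but it is the one actually used in the proofs.

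With these two substitutions---rescaled energy for frequency, and the directional-energy symmetry for the $L^2$-to-cone symmetry---your outline becomes the paper's proof.
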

In order to prove this result, we are going to apply the techniques developed in \cite{naber-valtorta:harmonic}, which roughly speaking rely on a quantitative version of the dimension-reduction argument. However, here we will present an alternative definition of the quantitative stratification used in \cite{naber-valtorta:harmonic}, which, for minimizing maps, turns out to be equivalent to the original one introduced in \cite{ChNa1,ChNa2}, but easier to handle. 

The quantitative stratification is based on the analysis of symmetries and approximate symmetries of the map $u$ at different points and scales, and roughly speaking the quantitative stratum $\cS^k_{\epsilon,r}$ is the set of points $x$ for which $u$ on $\B {r}{x}$ is $\epsilon$-far away from being homogeneous and invariant with respect to a $k$-dimensional subspace. While the notion of closeness employed by \cite{ChNa1,ChNa2} relies on the $L^2$ distance of the map $u$ from some homogeneous and $k$-symmetric model map $h$, we propose a notion that focuses on the $L^2$ norm of the \textit{gradient} of $u$ restricted to arbitrary $k$-subspaces. With this notion, we obtain a slightly better control over the different strata. This stratification is introduced in detail in section \ref{sec_strat}.

\vspace{5mm}
In section \ref{sec_nonpositive}, we also consider the special case of non-positive sectional curvatures in the target $\N$. For classical harmonic maps, this assumption implies full-blown continuity of the map $u$ everywhere. On the other hand, in the case of $Q$-valued map this is true only if $\N$ is assumed to be also \textit{simply connected}. We will provide a counterexample to show that this assumption is needed. This example is based on the fact that the graph of a $Q$-valued map can have a different topology from the one of its domain, and it shows once more that the properties of $Q$-valued maps can be very different from their single valued counterparts.

\vspace{5mm}
The plan of the paper is the following: first, we introduce $Q$-valued Dirichlet minimizers and quickly review the standard properties of these maps. In particular, we study the normalized energy $\E(x,B_{r}(x)):= r^{2-n}\int_{\B r x} \abs{D u}^2$ and its mollified version, which, although morally similar to the classical one, will prove itself to be more useful in quantitative estimates.

We then move on to the study of different versions of the $\epsilon$-regularity theorem for $Q$-valued maps. Soon after, we prove the main estimates on the singular set of $u$ and its stratification. This result relies on a sharp version of Reifenberg's theorem, which we quote from literature. Finally, we close the paper with the analysis of the case of non-positively curved target manifolds.

\section*{Acknowledgments}
The research of J. H. has been supported by the MIUR SIR-grant ``Geometric Variational Problems'', ID RBSI14RVEZ. S. S. was supported by the ERC-grant RAM ``Regularity of Area Minimizing currents'', ID 306246. D. V. has been supported by the SNSF grant PZ00P2\_168006. 

\section{Notation and Preliminaries}
Throughout the whole paper, we will denote by $\Omega$ an open subset of Euclidean space $\R^{m}$, $m \geq 2$, and by $\N$ a smooth compact Riemannian manifold of dimension $n$ with empty boundary. Without loss of generality, we regard $\N$ as an isometrically embedded submanifold of a Euclidean space $\R^{N}$. The symbol $A$ will denote the second fundamental form of the embedding $\N \hookrightarrow \R^N$.

The Euclidean scalar product in $\R^N$ is denoted by $\langle \cdot, \cdot \rangle$. Since the metric on $\N$ is induced by the flat metric on $\R^N$, the same symbol will also be adopted for the scalar product between tangent vectors to $\N$. The standard connection in $\R^m$ is denoted by $D$. If $\{ e_{i} \}_{i=1}^{m}$ is an orthonormal basis of $\R^m$, we will denote by $D_{i}$ the directional derivative operator $D_{e_i}$. 

The open ball with center $x$ and radius $r$ in $\R^m$ is denoted $B_{r}(x)$. If $1 \leq k \leq m-1$ and $L \subset \R^m$ is a linear subspace of dimension $\dim(L) = k$, then we will denote the disc $(x + L) \cap B_{r}(x)$ by $B^L_{r}(x)$, or often with the simpler notation $B^{k}_{r}(x)$.

\subsection{Multiple valued functions}
Fix an integer $Q \geq 1$. We will assume that the reader is familiar with the theory of Almgren's $Q$-valued functions, for which we refer to \cite{DLS11a}. In what follows, we briefly recall the main definitions and properties we are going to need in the sequel. The space of $Q$-points in $\R^N$ is denoted $\A_{Q}(\R^N)$, and defined by
\[
\A_{Q}(\R^N) := \left\lbrace T = \sum_{\ell=1}^{Q} \llbracket p_{\ell} \rrbracket \, \colon \, \mbox{each } p_{\ell} \in \R^N \right\rbrace,
\]
where $\llbracket p_{\ell} \rrbracket$ is the Dirac delta measure centered at $p_{\ell}$. Observe that, by definition, a $Q$-point $T$ is a purely atomic measure of mass $Q$ in $\R^N$ which is obtained as the sum of Dirac deltas with integer multiplicities. If $T \in \A_{Q}(\R^N)$, the symbol $\spt(T)$ will denote the \emph{support} of the aforementioned measure. We endow $\A_{Q}(\R^N)$ with the structure of complete metric space determined by the distance $\G(T_1, T_2)$ given by
\[
\G(T_1, T_2)^{2} := \min_{\sigma \in \mathcal{P}_Q} \sum_{\ell=1}^{Q} \abs{p_{\ell} - q_{\sigma(\ell)}}^{2}, 
\]
where $\mathcal{P}_Q$ denotes the group of permutations of $\{1,\dots,Q\}$, and $T_{1} = \sum_{\ell} \llbracket p_{\ell} \rrbracket$, $\,$ $T_{2} = \sum_{\ell} \llbracket q_{\ell} \rrbracket$.

Any map $f \colon \Omega \to \A_{Q}(\R^N)$ will be called a $Q$-valued function. It is a simple observation (cf. \cite[Proposition 0.4]{DLS11a}) that if $f$ is a measurable $Q$-valued function then there are measurable maps $f_{\ell} \colon \Omega \to \R^N$ for $\ell = 1,\dots,Q$ such that $f(x) = \sum_{\ell=1}^{Q} \llbracket f_{\ell}(x) \rrbracket$ at a.e. $x \in \Omega$. Any choice of $\{f_{\ell}\}_{\ell=1}^{Q}$ as above is called a measurable \emph{selection} for $f$.

For $p \in \left[ 1, \infty \right]$, the spaces $L^{p}( \Omega, \A_{Q}(\R^N) )$ consist of those measurable $f \colon \Omega \to \A_{Q}(\R^N)$ for which
\begin{align*}
\| f \|_{L^p}^{p} := \int_{\Omega} \G(f(x), Q\llbracket 0 \rrbracket)^{p} \, \mathrm{d}x &< \infty \quad \mbox{when } 1 \leq p < \infty \, , \\
\| f \|_{L^{\infty}} := \esssup_{x \in \Omega} \G(f(x), Q \llbracket 0 \rrbracket) &< \infty \, .
\end{align*}
For the sake of notational simplicity, we will often set $\abs{T} := \G(T, Q\llbracket 0 \rrbracket)$ if $T \in \A_{Q}(\R^N)$. We remark that if $Q > 1$ then $\A_{Q}(\R^N)$ is not a linear space: hence, in spite of the notation, $T \mapsto |T|$ is not a norm.

A map $f \colon \Omega \to \A_{Q}(\R^N)$ belongs to the Sobolev space $W^{1,p}(\Omega, \A_{Q}(\R^N))$ if there exists $\psi \in L^{p}(\Omega)$ such that for every Lipschitz function $\phi \colon \A_{Q}(\R^N) \to \R$ it holds:
\begin{itemize}
\item[$(i)$] $\phi \circ f \in W^{1,p}(\Omega)$;
\item[$(ii)$] $\abs{D(\phi \circ f)(x)} \leq \Lip(\phi) \psi(x)$ at a.e. $x \in \Omega$. 
\end{itemize}
By \cite[Proposition 4.2]{DLS11a}, if $f \in W^{1,p}(\Omega, \A_{Q}(\R^N))$ then for every $i \in \{ 1, \dots, m \}$ there exists a unique $g_{i} \in L^{p}(\Omega)$ such that
\begin{itemize}
\item[$(i)$] $\abs{D_{i} (\G(f, T))} \leq g_{i}$ a.e. for every $T \in \A_{Q}(\R^N)$;
\item[$(ii)$] if $h_{i} \in L^{p}(\Omega)$ is such that $\abs{D_{i} (\G(f, T))} \leq h_{i}$ a.e. for every $T \in \A_{Q}(\R^N)$ then $g_{i} \leq h_{i}$ a.e. 
\end{itemize}
We will call the function $g_{i}$ the \emph{metric derivative} of $f$ in the direction $e_{i}$.

As usual, $W^{1,p}_{loc}(\Omega, \A_{Q}(\R^N))$ consists of those measurable functions which are in $W^{1,p}(\Omega', \A_{Q}(\R^N))$ for every $\Omega' \Subset \Omega$.

A $Q$-valued function $f \colon \Omega \to \A_{Q}(\R^N)$ is \emph{differentiable} at a point $x \in \Omega$ if there exist $Q$ linear maps $\lambda_{\ell} \colon \R^m \to \R^N$ satisfying
\begin{itemize}
\item[$(i)$] $\G(f(y), \sum_{\ell} \llbracket f_{\ell}(x) + \lambda_{\ell} \cdot (y-x) \rrbracket) = o(\abs{y-x})$ for $|y-x| \to 0$;
\item[$(ii)$] $\lambda_{\ell} = \lambda_{\ell'}$ if $f_{\ell}(x) = f_{\ell'}(x)$.
\end{itemize}
If $f$ is differentiable at $x$, then the $Q$-point $\sum_{\ell=1}^{Q} \llbracket \lambda_{\ell} \rrbracket \in \A_{Q}(\R^{N\times m})$ is the \emph{differential} of $f$ at $x$, and will be denoted $D f(x)$ or $\left.D f \right|_{x}$. We will write $D f_{\ell}(x)$ for the map $\lambda_{\ell}$, so that $D f(x) = \sum_{\ell} \llbracket D f_{\ell}(x) \rrbracket$, and we establish the notation $D_{\tau} f(x) := \sum_{\ell} \llbracket D f_{\ell}(x) \cdot \tau \rrbracket = \sum_{\ell} \llbracket D_{\tau} f_{\ell}(x) \rrbracket \in \A_{Q}(\R^{N})$ for the directional derivative in the direction $\tau \in \R^m$. We will also sometimes write $Df(x) \cdot \tau$ for $D_{\tau}f(x)$, so that $Df(x) \cdot \tau = \sum_{\ell}\llbracket Df_{\ell}(x) \cdot \tau \rrbracket$. 

It is a consequence of the Lipschitz approximation theorem for Sobolev $Q$-valued functions \cite[Proposition 2.5]{DLS11a} and of the $Q$-valued counterpart of Rademacher's theorem \cite[Theorem 1.13]{DLS11a} that every Sobolev $Q$-valued map is approximately differentiable at a.e. $x \in \Omega$. Furthermore, as shown in \cite[Proposition 2.17]{DLS11a}, if $f \in W^{1,2}(\Omega, \A_{Q}(\R^N))$ then for every $i \in \{ 1, \dots, m \}$ it holds 
\[
g_{i}^{2} = \G(D_{i}f, Q \llbracket 0 \rrbracket)^{2} = \abs{D_{i}f}^{2} \quad \mbox{a.e. in } \Omega.
\]
This makes unambiguous the use of the notation $\abs{D_{i} f}$ for the metric derivative $g_{i}$. In particular, for $f \in W^{1,2}_{loc}(\Omega, \A_{Q}(\R^N))$ there is a well-defined notion of (rescaled) \emph{Dirichlet energy} in a ball $B_{r}(x) \Subset \Omega$, given by
\[
\E(f,B_{r}(x)) := r^{2-m} \int_{B_{r}(x)} \abs{D f(y)}^{2} \, \mathrm{d}y = r^{2-m} \int_{B_{r}(x)} \sum_{i=1}^{m} \abs{D_{i} f(y)}^{2} \, \mathrm{d}y.
\]

\subsection{\texorpdfstring{$Q$}{Q}-valued energy minimizing maps}

Now, set
\begin{equation}
W^{1,2}_{loc}(\Omega, \A_{Q}(\N)) := \left\lbrace u \in W^{1,2}_{loc}(\Omega, \A_{Q}(\R^{N})) \, \colon \, \spt(u(x)) \subset \N \, \mbox{ for a.e. } x \in \Omega \right\rbrace.
\end{equation}

\begin{definition}[Energy minimizers, cf. {\cite[Definition 1.1]{Hir16}}] \label{def:minimi_energia}
A map $u \in W^{1,2}_{loc}(\Omega, \A_{Q}(\N))$ is a \emph{local minimizer}, or simply \emph{minimizer}, of the Dirichlet energy if for any $B_{r}(x) \Subset \Omega$ the following holds
\begin{equation} \label{eq:minimality}
\E(u, B_{r}(x)) \leq \E(v, B_{r}(x))
\end{equation} 
for every $v \in W^{1,2}_{loc}(\Omega, \A_{Q}(\N))$ such that $v \equiv u$ in a neighborhood of $\partial B_{r}(x)$.
\end{definition}

The $Q$-valued energy minimizers defined in Definition \ref{def:minimi_energia} are the multi-valued counterpart of classical energy minimizing harmonic maps. We refer the reader to the beautiful monographs of Simon \cite{Simon96}, Moser \cite{Moser} or Lin-Wang \cite{Lin_HM} for more about classical (single-valued) energy minimizing maps. 

As anticipated in the introduction, a partial regularity theory for $Q$-valued energy minimizers was developed by the first author in \cite{Hir16}. For further reference, and for the readers' convenience, let us briefly collect the main results of \cite{Hir16} which we are going to use in the sequel.

A first important observation is that if $u \in W^{1,2}_{loc}(\Omega, \A_{Q}(\N))$ is energy minimizing and if $B_{r}(x) \Subset \Omega$ then one can test the minimality of $u$ along suitably chosen families $u_{\varepsilon}$ of competitors in order to infer that $u$ satisfies some integral equations, known as \emph{variational equations}, which turn out to be of fundamental importance for the regularity theory. There are two important kinds of variations that one may consider in this context: the \emph{inner variations} (obtained by perturbing $u$ by means of right compositions with diffeomorphisms in the \emph{domain}) and the \emph{outer variations} (obtained by perturbing $u$ by means of left compositions with diffeomorphisms in the \emph{target}).

\begin{proposition}[Variational equations, cf. {\cite[Equations (2.2) and (2.5)]{Hir16}}] \label{variational_equations}
Fix $B_{r}(x) \Subset \Omega$ and let $u \in W^{1,2}_{loc}(\Omega, \A_{Q}(\N))$ be energy minimizing. Then, for every vector field $X = \left( X^{1}, \dots, X^{m}\right) \in C^{1}_{c}(B_{r}(x), \R^{m})$ the following \emph{inner variation formula} holds:
\begin{equation}\label{eq:inner_variation_formula}
\int_{B_{r}(x)} \sum_{i,j=1}^{m} \left( |D u|^{2} \delta_{ij} - 2 \sum_{\ell=1}^{Q} \langle D_{i} u_{\ell}, D_{j} u_{\ell} \rangle \right) D_{i} X^{j} \, \mathrm{d}y = 0.
\end{equation} 
Moreover, for any vector field $Y \in C^{1}(B_{r}(x) \times \R^{N}, \R^{N})$ such that $Y(y,p) = 0$ for $y$ in a neighborhood of $\partial B_{r}(x)$ we have the following \emph{outer variation formula}:
\begin{equation} \label{outer_variation_formula}
\int_{B_{r}(x)} \sum_{i=1}^{m} \sum_{\ell=1}^{Q} \left( \langle D_{i} u_{\ell}, D_{i} (Y(y, u_{\ell})) \rangle + \langle A_{u_{\ell}}(D_{i} u_{\ell}, D_{i} u_{\ell}), Y(y, u_{\ell}) \rangle \right) \, \mathrm{d}y = 0.
\end{equation}
\end{proposition}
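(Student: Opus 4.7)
The strategy is to construct, in each case, a one-parameter family $\{u_\e\}_{|\e|<\e_0}$ of admissible competitors with $u_0 = u$ and $u_\e \equiv u$ in a neighborhood of $\partial B_r(x)$, invoke the minimality condition to conclude that $\frac{d}{d\e}\big|_{\e=0} \E(u_\e, B_r(x)) = 0$, and then compute this derivative explicitly.

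For the inner variation, given $X \in C^1_c(B_r(x), \R^m)$ I would consider the family of maps $\phi_\e(y) := y + \e X(y)$, which for $|\e|$ sufficiently small are diffeomorphisms of $\R^m$ equal to the identity outside $\spt(X) \Subset B_r(x)$. Setting $u_\e := u \circ \phi_\e^{-1}$ yields an admissible competitor in $W^{1,2}_{loc}(\Omega, \A_Q(\N))$ by the chain rule for $Q$-valued Sobolev maps (\cite{DLS11a}). The change of variables $y = \phi_\e(z)$ then gives
\[
\int_{B_r(x)} |Du_\e(y)|^2\, dy = \int_{B_r(x)} \sum_{\ell=1}^{Q} \bigl|Du_\ell(z)\, (D\phi_\e(z))^{-1}\bigr|^2 \det D\phi_\e(z)\, dz.
\]
Using the expansions $(D\phi_\e)^{-1} = \mathrm{Id} - \e DX + O(\e^2)$ and $\det D\phi_\e = 1 + \e \Div X + O(\e^2)$, differentiating at $\e = 0$ (justified by dominated convergence, since $X$ is compactly supported and $|Du| \in L^2_{loc}$), and exploiting the symmetry of $\sum_\ell \langle D_i u_\ell, D_j u_\ell\rangle$ in $(i,j)$ produces \eqref{eq:inner_variation_formula}.

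For the outer variation, fix a tubular neighborhood $U \supset \N$ on which the nearest-point projection $\Pi \colon U \to \N$ is smooth, and for $Y \in C^1(B_r(x) \times \R^N, \R^N)$ vanishing near $\partial B_r(x)$ define
\[
u_\e(y) := \sum_{\ell=1}^{Q} \a{\Pi\bigl(u_\ell(y) + \e Y(y, u_\ell(y))\bigr)}.
\]
For $|\e|$ small this lies in $W^{1,2}_{loc}(\Omega, \A_Q(\N))$ and coincides with $u$ near $\partial B_r(x)$. Differentiating $|D_i u_{\e, \ell}|^2 = \bigl|D\Pi(u_\ell + \e Y) \cdot (D_i u_\ell + \e D_i[Y(\cdot, u_\ell)])\bigr|^2$ at $\e = 0$, using that $D\Pi(u_\ell)$ is the self-adjoint orthogonal projection onto $T_{u_\ell}\N \ni D_i u_\ell$, gives
\[
2\sum_{i,\ell} \langle D_i u_\ell, D^2\Pi(u_\ell)[Y(\cdot, u_\ell), D_i u_\ell]\rangle + 2\sum_{i,\ell}\langle D_i u_\ell, D_i[Y(\cdot, u_\ell)]\rangle.
\]

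The step I expect to be the main obstacle is then the identity
\[
\langle D_i u_\ell, D^2\Pi(u_\ell)[Y, D_i u_\ell]\rangle = \langle A_{u_\ell}(D_i u_\ell, D_i u_\ell), Y\rangle,
\]
which relates the Hessian of $\Pi$ to the second fundamental form of $\N \hookrightarrow \R^N$: it is obtained by differentiating $D\Pi(p)\big|_{T_p\N} = \mathrm{Id}$ along $\N$ (whence $D^2\Pi(p)$ vanishes on two tangent inputs) and then expressing $D^2\Pi(p)[\nu, V]$ for $\nu$ normal and $V$ tangent through the Weingarten shape operator of $\N$. Substituting this identity and setting the full derivative equal to zero yields the outer variation formula. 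Since both computations are pointwise, the $Q$-valued structure enters only through the outer sum over the sheets $\{u_\ell\}_{\ell=1}^{Q}$ and introduces no essential difficulty beyond the analogous single-valued case.
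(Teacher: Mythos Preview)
The paper does not actually prove this proposition: it is quoted directly from \cite{Hir16} (equations (2.2) and (2.5) there), and no argument is supplied in the present text. So there is no ``paper's proof'' to compare against.

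Your argument is the standard one and is correct. A few remarks:
\begin{itemize}
\item For the inner variation, the computation you outline is exactly the classical one; the only $Q$-valued ingredient you need is that $u\circ\phi_\e^{-1}\in W^{1,2}$ with the obvious chain rule, which is in \cite{DLS11a} as you say.
\item For the outer variation, you should note explicitly that the competitor $u_\e(y)=\sum_\ell\a{\Pi(u_\ell(y)+\e Y(y,u_\ell(y)))}$ is well-defined independently of the measurable selection $\{u_\ell\}$: if $u_\ell(y)=u_{\ell'}(y)=p$, both sheets are sent to $\Pi(p+\e Y(y,p))$, so the unordered $Q$-tuple is unambiguous. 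This is the only place the $Q$-valued structure requires comment.
\item The identity you flag as the ``main obstacle'' is indeed the right one, and your sketch of it is correct: decomposing $Y=Y^T+Y^\perp$, the contribution of $Y^T$ vanishes since $D^2\Pi(p)[V,W]=A_p(V,W)\perp T_p\N$ for $V,W$ tangent, while for $Y^\perp$ one has $\langle V, D^2\Pi(p)[Y^\perp,V]\rangle=\langle A_p(V,V),Y^\perp\rangle=\langle A_p(V,V),Y\rangle$ via the Weingarten relation. This is exactly how the second fundamental form enters in the classical derivation (cf.\ \cite{Simon96,Moser}).
\end{itemize}
In short: your proof is complete and follows the route one would find in \cite{Hir16} or any standard reference on harmonic maps.
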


Recall that in the classical case $Q = 1$ a map $u \in W^{1,2}_{loc}(\Omega, \N)$ satisfying the identity \eqref{outer_variation_formula} for any $Y$ is referred to as a \emph{weakly harmonic map}, whereas a map $u$ for which both \eqref{eq:inner_variation_formula} and \eqref{outer_variation_formula} hold for any choice of $X$ and $Y$ is called a \emph{stationary harmonic map}. Analogously, we will call \emph{stationary $Q$-harmonic} any map $u \in W^{1,2}_{loc}(\Omega, \A_{Q}(\N))$ for which both equations \eqref{eq:inner_variation_formula} and \eqref{outer_variation_formula} hold. Of course, by Proposition \ref{variational_equations} every $Q$-valued energy minimizing map $u$ is stationary $Q$-harmonic. On the other hand, some of the results that we present here hold true under the weaker assumption that $u$ is stationary $Q$-harmonic rather than minimizing, since their proofs are a consequence solely of the variational equations. We will explicitly underline in our statements every time that the result applies also to stationary $Q$-harmonic maps.

A first powerful result stemming from the variational equations is the monotonicity of the map $r \in \left( 0, \dist(x,\partial\Omega) \right) \mapsto \E(u, B_{r}(x))$ for every fixed point $x \in \Omega$, cf. \cite[Equation (2.6)]{Hir16}. As a consequence, if $u$ is stationary $Q$-harmonic then for every $x \in \Omega$ the \emph{density} $\Theta_{u}(x)$ of $u$ at $x$ is well-defined by the formula
\[
\Theta_{u}(x) := \lim_{r \downarrow 0} \E(u, B_{r}(x)) \,.
\]

Multiple-valued energy minimizers also enjoy the following compactness theorem.
\begin{theorem}[Compactness, cf. {\cite[Lemma 4.1]{Hir16}}] \label{HM_Q_compactness}
Let $\{ u_{h} \}_{h=1}^{\infty} \subset W^{1,2}(\Omega, \A_{Q}(\N))$ be a sequence of $Q$-valued minimizing harmonic maps with $\sup_{h \geq 1} \E(u_h, B_{r}(x)) < \infty$ for each ball $B_{r}(x) \Subset \Omega$. Then, there is a subsequence $u_{h_j}$ and a minimizing harmonic map $u \in W^{1,2}(\Omega, \A_{Q}(\N))$ such that 
\begin{itemize}

\item[$(i)$] $\lim_{j \to \infty} \int_{\Omega} \G(u_{h_j}, u)^{2} \, {\rm d}y = 0$;

\item[$(ii)$] $\lim_{j \to \infty} \E(u_{h_j}, B_{r}(x)) = \E(u, B_{r}(x))$ for every ball $B_{r}(x) \Subset \Omega$.

\end{itemize}
\end{theorem}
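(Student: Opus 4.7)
The plan is to follow the classical Schoen--Uhlenbeck strategy, adapted to the $Q$-valued manifold-constrained setting. First, I would extract a subsequence and a limiting map, then establish lower semicontinuity of the energy along the subsequence, and finally show that the limit is itself minimizing through a comparison argument based on a Luckhaus-type interpolation. The compactness statement on energies will then follow by testing against $v = u$ in the minimizing inequality.

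Concretely, fix an exhaustion $\Omega' \Subset \Omega$. Using the uniform energy bound and the Poincar\'e--Sobolev inequality for $Q$-valued Sobolev maps (see \cite[Section 2]{DLS11a}), together with Almgren's biLipschitz embedding $\xi \colon \A_Q(\R^N) \to \R^{NQ}$, the sequence $\{\xi \circ u_h\}$ is bounded in $W^{1,2}_{\loc}(\Omega, \R^{NQ})$, hence precompact in $L^2_{\loc}$ by Rellich's theorem. Passing to a subsequence $u_{h_j}$, we obtain a limit $u \in W^{1,2}_{\loc}(\Omega, \A_Q(\R^N))$ with $\G(u_{h_j}, u) \to 0$ in $L^2_{\loc}$ and a.e. along a further subsequence. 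Since $\N$ is closed and $\spt(u_{h_j}(y)) \subset \N$ a.e., the a.e. convergence of supports gives $\spt(u(y)) \subset \N$ a.e., so $u \in W^{1,2}_{\loc}(\Omega, \A_Q(\N))$. Lower semicontinuity of the Dirichlet energy (which for $Q$-valued maps follows from \cite[Sections 2, 4]{DLS11a}) yields
\begin{equation*}
\E(u, B_r(x)) \leq \liminf_{j \to \infty} \E(u_{h_j}, B_r(x)) \quad \text{for every } B_r(x) \Subset \Omega.
\end{equation*}

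The main obstacle is proving that $u$ is minimizing. Given any competitor $v \in W^{1,2}_{\loc}(\Omega, \A_Q(\N))$ with $v \equiv u$ in a neighborhood of $\partial B_r(x)$, I need to construct competitors $v_j$ for $u_{h_j}$ that achieve $\limsup_j \E(v_j, B_r(x)) \leq \E(v, B_r(x))$. This is where a $Q$-valued Luckhaus-type lemma is essential: on a thin annular shell $B_r(x) \setminus B_{r-\delta}(x)$, one must interpolate between $u_{h_j}$ and $v$ with controlled Dirichlet energy, using that $\G(u_{h_j}, v) = \G(u_{h_j}, u) \to 0$ in $L^2$ on the shell. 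One then truncates the interpolant so that it still takes values in $\A_Q(\N)$ using the nearest-point projection onto $\N$, available in a tubular neighborhood. For almost every radius $\rho \in (r-\delta, r)$ (chosen by a Fubini/slicing argument on the energy on spherical slices), one produces $v_j$ equal to $v$ on $B_\rho(x)$, equal to $u_{h_j}$ outside $B_r(x)$, and on the shell satisfying $\E(v_j, B_r(x) \setminus B_\rho(x)) \to 0$ as $j \to \infty$ and $\delta \to 0$. This construction is exactly \cite[Lemma 4.1]{Hir16} and uses the fact that the target is a smooth compact manifold.

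Accepting this interpolation step, I conclude by the chain of inequalities
\begin{equation*}
\E(u, B_r(x)) \leq \liminf_j \E(u_{h_j}, B_r(x)) \leq \limsup_j \E(u_{h_j}, B_r(x)) \leq \limsup_j \E(v_j, B_r(x)) \leq \E(v, B_r(x)),
\end{equation*}
where the third inequality uses that each $u_{h_j}$ is minimizing and $v_j$ is admissible. This gives minimality of $u$. Finally, for statement $(ii)$, plugging $v = u$ into the interpolation construction shows $\limsup_j \E(u_{h_j}, B_r(x)) \leq \E(u, B_r(x))$, which combined with the lower semicontinuity bound yields $\lim_j \E(u_{h_j}, B_r(x)) = \E(u, B_r(x))$ for every $B_r(x) \Subset \Omega$. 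The hard part throughout is really the Luckhaus interpolation: one has to simultaneously match boundary values in the $Q$-valued metric space, stay inside the manifold $\N$, and control the energy contribution of the interpolation layer.
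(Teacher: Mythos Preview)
Your sketch is essentially the standard Schoen--Uhlenbeck argument adapted to the $Q$-valued setting, and it is correct in outline. Note, however, that the present paper does not actually prove this theorem: it is quoted without proof from \cite[Lemma 4.1]{Hir16}, which you yourself cite as the source of the Luckhaus interpolation step. So there is no ``paper's own proof'' to compare against here; the result is imported as a black box, and your proposal is a faithful reconstruction of the argument behind that black box.
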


The monotonicity of the rescaled energy at a fixed point $x_{0} \in \Omega$ together with the Compactness Theorem \ref{HM_Q_compactness} allow to conclude the existence of tangent maps. In particular, for every sequence $r_h$ of radii with $r_h \downarrow 0$ there exists a subsequence $r_{h'}$ such that the maps $T^{u}_{x_0,r_{h'}}(y) := u(x_0 + r_{h'}y)$ converge in $L^2$ and locally in energy to a $Q$-valued energy minimizing map $\phi \in W^{1,2}_{loc}(\R^m, \A_{Q}(\N))$. Any map $\phi$ arising as a limit of a sequence $T^{u}_{x_0,r_h}$ for some sequence $r_h \downarrow 0$ is called a \emph{tangent map} to $u$ at $x_0$ (cf. \cite[Definition 6.1]{Hir16}). Every tangent map $\phi$ is homogeneous of degree zero with respect to $0 \in \R^m$, and thus it satisfies $\E(\phi, B_{\rho}(0)) = \Theta_{\phi}(0) = \Theta_{u}(x_0)$ for every $\rho > 0$. Furthermore, the map $y \in \R^m \mapsto \Theta_{\phi}(y)$ attains its maximum at $y = 0$. The set of points $y \in \R^m$ for which $\Theta_{\phi}(y) = \Theta_{\phi}(0)$ is classically called the \emph{spine} of $\phi$, and it is denoted $S(\phi)$. It turns out that, exactly as in the classical case, $S(\phi)$ is a linear subspace of $\R^m$, and $\phi$ is invariant with respect to compositions with translations by elements in $S(\phi)$, that is $\phi(x + y) = \phi(x)$ for every $x \in \R^m$, for any $y \in S(\phi)$. The dimension of $S(\phi)$ is the number of independent directions along which $\phi$ is invariant. Now, if $u$ is H\"older continuous in a neighborhood of $x_0$ then it is easy to see that $\Theta_{u}(x_0) = 0$, and thus $u$ admits in $x_0$ a tangent map $\phi$ for which $S(\phi) = \R^m$, and $\phi$ is constant. The set $\sing_{H}(u)$ of points $x$ such that $u$ is not H\"older continuous in a neighborhood can be instead classically stratified according to the number of symmetries that the tangent maps at points in it have. In particular, for $0 \leq k \leq m-1$ one defines
\begin{equation} \label{standard_singular_strata}
\cS^{k}(u) := \left\lbrace x \in \sing_{H}(u) \, \colon \, \dim S(\phi) \leq k \mbox{ for every tangent map $\phi$ to $u$ at $x$} \right\rbrace\,.
\end{equation}

The following $\e$-regularity theorem in the spirit of Schoen-Uhlenbeck \cite{SU} is the core of \cite{Hir16} and the key to completing the partial regularity theory.
\begin{theorem}[$Q$-valued $\e$-regularity, cf. {\cite[Lemma 5.2]{Hir16}}] \label{Q_eps_reg}
There exist constants $\e_0 > 0$, $\alpha > 0$ and $C > 1$ depending on $m,\N,Q$ with the property that if $u \in W^{1,2}_{loc}(\Omega, \A_{Q}(\N))$ is energy minimizing in $B_{R_0}(x_0)$ with
\[
\E(u, B_{R_0}(x_0)) \leq \e_0\,,
\]
then the following energy decay estimate holds:
\[
\E(u, B_{r}(x)) \leq C \left( \frac{r}{R} \right)^{2\alpha} \E(u, B_{R}(x)) \quad \forall \, x \in B_{\frac{R_0}{2}}(x_0) \,, \forall \, 0 < r \leq R \leq \frac{R_0}{2}.
\]
In particular, $u \in C^{0,\alpha}(B_{\frac{R_0}{2}}(x_0), \A_{Q}(\N))$.
\end{theorem}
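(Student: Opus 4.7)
The plan is to follow the classical Schoen--Uhlenbeck scheme adapted to the $Q$-valued framework. The strategy is to first prove a single-scale energy decay: there exist $\theta \in (0, 1/2)$, $\e_0 > 0$ and $\alpha \in (0,1)$ such that if $u$ is energy minimizing in $B_{r}(x)$ with $\E(u, B_{r}(x)) \leq \e_0$ then
\[
\E(u, B_{\theta r}(x)) \leq \theta^{2\alpha}\, \E(u, B_{r}(x)).
\]
Once this is established, the monotonicity of $r \mapsto \E(u, B_{r}(x))$ preserves the small-energy hypothesis on every concentric sub-ball, so the one-scale decay can be iterated along the radii $\theta^{k} R$, and together with Campanato's integral characterization of H\"older continuity this yields both the full decay at every pair of scales $0 < r \leq R \leq R_{0}/2$ and the $C^{0,\alpha}$ regularity statement.

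To prove the one-scale decay I would argue by contradiction and blow-up. Suppose it fails and take a sequence of minimizers $u_k$ on $B_{1}(0)$ with $\lambda_{k}^{2} := \E(u_k, B_{1}(0)) \to 0$ for which the conclusion is violated at every fixed $\theta$. The Sobolev--Poincar\'e inequality for $Q$-valued maps \cite{DLS11a} forces $u_k$ to be $L^{2}$-close to a constant $Q$-point $P_{k} = \sum_{\ell=1}^{Q} \a{p_{k,\ell}}$ with $p_{k,\ell} \in \N$. Up to subsequences, either the supports of the $P_k$ split into two or more clusters separated by a fixed positive distance, or they all collapse to a single point $p_0 \in \N$. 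In the splitting case, an elementary barrier argument ensures that on a slightly smaller ball $u_k$ decomposes as the superposition of energy minimizing $Q_i$-valued maps ($Q_i < Q$) with disjoint supports, so one concludes by induction on $Q$, the base case $Q=1$ being the classical Schoen--Uhlenbeck $\e$-regularity. In the collapsing case, I identify a tubular neighborhood of $p_0$ in $\R^{N}$ with a neighborhood of $0$ in $T_{p_0}\N \cong \R^{n}$ via exponential coordinates, write $u_k$ in these coordinates, and rescale by $\lambda_k$ to produce $v_k \in W^{1,2}(B_{1}(0), \A_{Q}(\R^{n}))$ with $\E(v_k, B_{1}(0)) = 1$.

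The heart of the argument is to identify the limit of the $v_k$ as a \emph{classical} Almgren $\R^{n}$-valued $Q$-valued Dirichlet minimizer $\phi$. Uniform energy bounds give strong $L^{2}$-convergence by a $Q$-valued Rellich--Kondrachov theorem \cite{DLS11a}, and the limit lies in $W^{1,2}_{loc}(\R^{m}, \A_{Q}(\R^{n}))$. To upgrade this to minimality, one compares $u_k$ with competitors of the form $\exp_{p_0} \circ\, \tilde v_k$, where $\tilde v_k$ is a Euclidean competitor for $v_k$, and estimates the energy error introduced by the exponential map: since the retraction onto $\N$ is an isometry plus a quadratic error in the distance from $\N$, this error is of higher order in $\lambda_k$ and disappears after dividing the Dirichlet energies by $\lambda_k^{2}$. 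Similarly, the second fundamental form term in \eqref{outer_variation_formula} is quadratic in $Du_{k,\ell}$ and normal to $T_{p_0}\N$, so after the $\lambda_k^{-2}$-rescaling it contributes only an $o(1)$ correction when tested against tangential variations, and $\phi$ satisfies the Euclidean outer variation identity and the minimality property. Almgren's regularity theorem (in the form revisited in \cite{DLS11a}) then gives, for $\phi$, a universal decay estimate
\[
\E(\phi, B_{\theta}(0)) \leq C_{0}\, \theta^{2\alpha_{0}} \E(\phi, B_{1}(0))
\]
with $\alpha_{0} = \alpha_{0}(m, n, Q) > 0$, and choosing $\theta$ small relative to the target $\alpha < \alpha_{0}$ and to $C_{0}$ contradicts the assumed failure of the one-scale decay for the $v_k$. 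The principal obstacle is exactly this linearization step: the non-linear geometry of $\A_{Q}(\N)$ forces a separate treatment of the sheet-splitting alternative, a careful construction of competitors through the projection onto $\N$ with sharp error estimates, and a delicate passage to the limit in the non-linear outer variation formula.
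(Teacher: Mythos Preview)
The paper does not prove this theorem; it is quoted from \cite[Lemma~5.2]{Hir16} as background, so there is no in-paper argument to compare your proposal against. Your outline is precisely the Schoen--Uhlenbeck blow-up scheme adapted to $Q$-valued targets, and this is indeed the route taken in \cite{Hir16}: an induction on $Q$ that reduces the sheet-splitting alternative to lower $Q$, and in the collapsed case a rescaling that linearizes the problem to an Almgren $\Dir$-minimizer into $\R^{n}$, whose interior H\"older estimate from \cite{DLS11a} supplies the decay contradicting the blow-up hypothesis.

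Two places in your sketch are more delicate than you indicate. First, the ``elementary barrier argument'' for the splitting case is not elementary: knowing that $u_k$ is $L^{2}$-close to a split $Q$-point does not by itself force the sheets of $u_k$ to separate pointwise on a smaller ball, and a genuine competitor construction is needed to exploit minimality here. Second, in the collapsed case you only have $u_k \to Q\a{p_0}$ in $L^{2}$, so writing $u_k$ in exponential coordinates around $p_0$ and obtaining strong energy convergence of the rescalings $v_k$ both require justification. In \cite{Hir16} these issues are handled through a $Q$-valued Luckhaus interpolation lemma (\cite[Lemma~3.1]{Hir16}), which manufactures competitors with controlled energy and $L^\infty$ bound that transition between $u_k$ on a sphere and a map with values in a prescribed region; this is the technical engine behind both the splitting reduction and the strong compactness needed for the blow-up. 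With that lemma in hand your argument goes through essentially as written.
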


By the $\e$-regularity theorem, the set $\sing_{H}(u)$ coincides with the set $\left\lbrace x \, \colon \, \Theta_{u}(x) > 0 \right\rbrace$. Using this information, it is standard to conclude that if $u$ is minimizing then $\Ha^{m-2}(\sing_{H}(u)) = 0$. On the other hand, if $x_0 \in \sing_{H}(u)$ then for any tangent map $\phi$ one has $\Theta_{\phi}(0) = \Theta_{u}(x_0) > 0$, and thus the spine $S(\phi)$ is a subset of $\sing_{H}(\phi)$. Since $S(\phi)$ is a linear subspace, and since $\Ha^{m-2}(\sing_{H}(\phi)) = 0$, we have that $\dim S(\phi) \leq m-3$. Hence, $\cS^{m-1}(u) = \cS^{m-2}(u) = \cS^{m-3}(u) = \sing_{H}(u)$. By a variation of the standard Federer-Almgren dimension-reduction argument, one then concludes that $\dim_{\Ha}(\sing_{H}(u)) = \dim_{\Ha}(\cS^{m-3}(u)) \leq m-3$.

\vspace{3mm} In the next sections we will turn our attention to the \emph{quantitative stratification} for $Q$-valued minimizing maps, which will allow us to obtain better information on the fine properties of the singular strata $\cS^{k}(u)$. Before doing that, we will slightly modify the definition of the rescaled energy: the new ``mollified'' energy that we are going to introduce in the coming section, will be more useful for quantitative estimates.

\section{The mollified Dirichlet energy and its monotonicity}

\begin{definition}[Mollified energy] \label{def:mollified_energy}
Let $\varphi = \varphi(t)$ be any non-negative function in $C^{1}_{c}(\left[ 0, 1 \right))$ which is constant in a neighborhood of $t = 0$.

Then, for any $u \in W^{1,2}_{loc}(\Omega, \A_{Q}(\N))$ and for any $B_{r}(x) \subset \Omega$ we define the quantity
\begin{equation} \label{def:energy}
\theta_{u}(x,r) := r^{2 - m} \int \varphi\left( \frac{|x-y|}{r}\right) |D u(y)|^{2} \, \mathrm{d}y.
\end{equation}
\end{definition}

When the map $u$ is fixed, we will simply write $\theta(x,r)$ for the sake of notational simplicity. In what follows, we show that, under suitable assumptions on $\varphi$, the function $r \mapsto \theta(x,r)$ is monotone non-decreasing for fixed $x$, and we explicitly compute its derivative.

\begin{notazioni}
For any $x \in \R^{m}$, we shall denote by $r_{x}$ the radial unit vector field with respect to $x$, defined by
\[
r_{x}(y) := \frac{y-x}{|y-x|} \quad \mbox{for every } y \in \R^{m} \setminus \{x\}.
\]
\end{notazioni}

\begin{lemma} Let $u \in W^{1,2}_{loc}(\Omega,  \I{Q}(\N))$ be a stationary $Q$-harmonic map, and let $x \in \Omega$. For any $\varphi$ as in Definition \ref{def:mollified_energy}, the following identity holds true for all $r$ such that $B_r (x) \subset \Omega$:
\begin{equation}\label{eq:monotonicity identity-differnetial form}
\frac{d}{dr} \theta(x,r) = - 2 r^{2-m}  \int  \varphi'\left(\frac{\abs{x-y}}{r}\right)\frac{\abs{x-y}}{r^2}  \Abs{D_{r_x}u(y)}^2 \, \mathrm{d}y.
\end{equation}
In particular, if we let $\psi = \psi(t)$ denote a primitive function of $\varphi'(t) t^{m-2}$, then for $0 < s < r < \dist(x, \partial\Omega)$ we have: 
\begin{equation}\label{eq:monotonicity identity-integrated}
\theta(x,r)-\theta(x,s) = \int \left(\psi\left(\frac{\abs{x-y}}{r}\right)-\psi\left(\frac{\abs{x-y}}{s}\right)\right) \abs{x-y}^{2-m} \Abs{D_{r_x}u(y)}^2 \, \mathrm{d}y.
\end{equation}
In case we choose $\varphi$ to be non-increasing, we have that $r \mapsto \theta(x,r)$ is non-decreasing; furthermore, if $-\varphi'(t) \ge (1-t)^+$ then it holds
\begin{equation}\label{eq:monotonicity identity-integrated lower bound}
\theta(x,r)-\theta(x,r/2) \ge C \int_{B_{\frac{r}{2}}(x)}   \frac{\abs{x-y}}{r^{m-1}} \Abs{D_{r_x}u(y)}^2 \, \mathrm{d}y
\end{equation}
for some positive constant $C = C(m)$. 
\end{lemma}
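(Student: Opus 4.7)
The plan is to carry out the standard Pohozaev/monotonicity computation with the weight $\varphi$ playing the role usually played by the indicator of $B_r(x)$, so that what would normally come from a boundary integration by parts here comes from differentiation of $\varphi$. The only ingredient is the inner variation formula \eqref{eq:inner_variation_formula}, which is available because $u$ is stationary $Q$-harmonic. I would test \eqref{eq:inner_variation_formula} against the radial field
\[
X(y) := (y-x)\,\varphi\!\left(\tfrac{|y-x|}{r}\right),
\]
which is admissible: $\spt\varphi\subset[0,1)$ forces $\spt X\subset\subset B_r(x)$, and since $\varphi$ is constant near $0$ the composition $\varphi(|y-x|/r)$ is $C^1$ at $y=x$. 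Plugging $X$ into \eqref{eq:inner_variation_formula}, expanding $D_iX^j = \delta_{ij}\varphi+r^{-1}|y-x|^{-1}(y-x)^i(y-x)^j\varphi'$, and using $\sum_\ell|D_{y-x}u_\ell|^2=|y-x|^2|D_{r_x}u|^2$ yields a Pohozaev-type identity relating $(m-2)\int\varphi|Du|^2$, $\int(|y-x|/r)\varphi'|Du|^2$, and $\int(|y-x|/r)\varphi'|D_{r_x}u|^2$. Since differentiating $\theta(x,r)$ directly in $r$ produces a combination of the first two quantities (with an explicit prefactor $-r^{1-m}$), substituting the Pohozaev identity cancels the $|Du|^2$ contributions and leaves exactly \eqref{eq:monotonicity identity-differnetial form}.

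The integrated identity \eqref{eq:monotonicity identity-integrated} then follows by integrating \eqref{eq:monotonicity identity-differnetial form} in $t\in[s,r]$ and swapping the order of integration. The inner $t$-integral can be evaluated via the substitution $\sigma=|y-x|/t$: this sends $\varphi'(|y-x|/t)\,|y-x|\,t^{-m}\,dt$ to $-|y-x|^{2-m}\varphi'(\sigma)\sigma^{m-2}\,d\sigma$, and recognizing $\varphi'(\sigma)\sigma^{m-2}=\psi'(\sigma)$ yields a difference of values of $\psi$ at $|y-x|/r$ and $|y-x|/s$, giving \eqref{eq:monotonicity identity-integrated}. Non-decreasingness when $\varphi'\le 0$ is immediate from \eqref{eq:monotonicity identity-differnetial form}, since the integrand on the right-hand side is pointwise non-negative.

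For \eqref{eq:monotonicity identity-integrated lower bound} I would apply \eqref{eq:monotonicity identity-differnetial form} pointwise in $t$, use the hypothesis $-\varphi'(\sigma)\ge(1-\sigma)^+$, and restrict the integration in $y$ to $B_{r/2}(x)$ and the variable $t$ to $[3r/4,r]$. On this range $|y-x|/t\le 2/3$ so $(1-|y-x|/t)\ge 1/3$, and $t\asymp r$; this produces
\[
\frac{d}{dt}\theta(x,t)\;\ge\;C\,r^{-m}\int_{B_{r/2}(x)}|y-x|\,|D_{r_x}u|^2\,dy
\]
for every $t\in[3r/4,r]$. Integrating over an interval of length $r/4$ and dominating $\theta(x,3r/4)\ge\theta(x,r/2)$ via the monotonicity just established gives the bound with an explicit $C=C(m)$. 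The only step that truly requires attention is the change of variables in the second paragraph, where the orientation reversal of the $\sigma$-interval and the (negative) sign of $\varphi'$ must be tracked exactly; the remainder of the argument is essentially algebraic.
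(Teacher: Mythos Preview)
Your proposal is correct and follows essentially the same approach as the paper: the same radial test field $X(y)=\varphi(|y-x|/r)(y-x)$ is plugged into the inner variation formula to obtain \eqref{eq:monotonicity identity-differnetial form}, and the integrated identity \eqref{eq:monotonicity identity-integrated} follows by recognizing $\varphi'(\sigma)\sigma^{m-2}=\psi'(\sigma)$ after the change of variables $\sigma=|y-x|/t$. The only minor difference is in the derivation of the lower bound \eqref{eq:monotonicity identity-integrated lower bound}: the paper applies \eqref{eq:monotonicity identity-integrated} with $s=r/2$ and estimates $\psi(a)-\psi(2a)\ge C_m a^{m-1}$ for $a=|x-y|/r\le 1/2$ directly, whereas you return to the differential identity, restrict to $t\in[3r/4,r]$, and use monotonicity to pass from $\theta(x,3r/4)$ to $\theta(x,r/2)$; both routes are equally short and yield the same conclusion.
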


\begin{proof}
The identity \eqref{eq:monotonicity identity-differnetial form} follows from the inner variation formula, equation \eqref{eq:inner_variation_formula}. Indeed, for any fixed $x \in \Omega$ and $0 < r < \dist(x, \partial\Omega)$ define the vector field $X(y):= \varphi\left(\frac{\abs{x-y}}{r}\right) (y-x)$. If we plug this choice of $X$ in \eqref{eq:inner_variation_formula}, we easily deduce the identity
\[ (m-2) \int \varphi\left(\frac{\abs{x-y}}{r}\right) \abs{D u(y)}^2 \, \mathrm{d}y\, +\, \int \varphi'\left(\frac{\abs{x-y}}{r}\right)\frac{\abs{x-y}}{r} \left( \abs{D u(y)}^2 - 2 \Abs{D_{r_x}u(y)}^2 \right) \, \mathrm{d}y = 0.\]
To conclude, we can differentiate the quantity $\theta(x,r)$ in $r$ and obtain the differential identity \eqref{eq:monotonicity identity-differnetial form}.

Now, let $\psi$ be a primitive function of $\varphi'(t) t^{m-2}$. We have 
\[
\frac{d}{dr} \psi\left(\frac{\abs{x-y}}{r}\right)= -\frac{1}{r} \varphi'\left(\frac{\abs{x-y}}{r}\right) \left(\frac{\abs{x-y}}{r}\right)^{m-1},
\]
and thus we can rewrite \eqref{eq:monotonicity identity-differnetial form} as
\[ \frac{d}{dr} \theta(x,r) = 2 \frac{d}{dr} \int \psi\left(\frac{\abs{x-y}}{r}\right) \abs{x-y}^{2-m} \Abs{D_{r_x}u(y)}^2\, \mathrm{d}y.\]
Integrating immediately leads to \eqref{eq:monotonicity identity-integrated}.

If we choose $\varphi'(t) \le 0$, then \eqref{eq:monotonicity identity-differnetial form} implies that $r\mapsto \theta(x,r)$ is non-decreasing.
In case $-\varphi'(t)\geq(1-t)^+$, we have for $0 < a \le \frac12$
\begin{equation} \label{psi_estimate} 
\psi(a)-\psi(2a) = - \int_a^{2a} \varphi'(t) t^{m-2} \geq a^{m-1} \left( \frac{ 2^{m-1}-1}{m-1} - a \frac{2^m-1}{m} \right) \ge C_m  a^{m-1}. 
\end{equation}
Hence, the estimate \eqref{eq:monotonicity identity-integrated lower bound} can be deduced from \eqref{eq:monotonicity identity-integrated} by using the fact that $\psi$ is non-increasing to estimate
\begin{equation}
\theta(x,r) - \theta(x, r/2) \geq \int_{B_{\frac{r}{2}}(x)} \left(\psi\left(\frac{\abs{x-y}}{r}\right)-\psi\left(\frac{2\abs{x-y}}{r}\right)\right) \abs{x-y}^{2-m} \Abs{D_{r_x}u(y)}^2 \, \mathrm{d}y, 
\end{equation}
and then using the inequality in \eqref{psi_estimate} with $a = \frac{|x - y|}{r}$ for $y \in B_{\frac{r}{2}}(x)$.
\end{proof}

\begin{ipotesi}
For the rest of the paper, we will assume that $\varphi$ has been fixed, and that it satisfies the condition $-\varphi'(t) \geq (1-t)^{+}$, so that the inequality \eqref{eq:monotonicity identity-integrated lower bound} holds.
\end{ipotesi}

\section{Quantitative stratification}\label{sec_strat}

The first step towards the definition of the quantitative singular strata is to introduce the notion of ``model maps'' having a given number of symmetries. This definition is analogous to \cite[Definition 1.1]{naber-valtorta:harmonic}.

\begin{definition}[$k$-symmetric maps] \label{def:hom_sym}
A map $h \in W^{1,2}_{loc}(\R^{m}, \A_{Q}(\N))$ is said to be:
\begin{itemize}

\item[•] \emph{homogeneous} with respect to $x \in \R^{m}$ if
\[
h(x + \lambda v) = h(x + v) \quad \mbox{for all } \lambda > 0, \mbox{ for every } v \in \R^{m},
\]
or equivalently if 
\[
D_{r_{x}}h = Q \llbracket 0 \rrbracket \quad \mbox{a.e. in } \R^{m}.
\]

\item[•] $k$-\emph{symmetric} if it is homogeneous with respect to the origin and there exists a linear subspace $L \subset \R^{m}$ with $\dim(L) = k$ along which $h$ is invariant, that is
\[
h(x + v) = h(x) \quad \mbox{for every } x \in \R^{m}, \mbox{ for all } v \in L,
\]
or, equivalently, such that
\[
D_{v}h(x) = Q \llbracket 0 \rrbracket, \quad \mbox{for a.e. } x \in \R^{m}, \mbox{ for all } v \in L.
\]
\end{itemize}
\end{definition}

Observe that if $h \in W^{1,2}_{loc}(\R^m, \A_{Q}(\N))$ is stationary and homogeneous with respect to $x$ then $\theta_{h}(x,s) = \theta_{h}(x,r)$ for every $0 < s < r$ by \eqref{eq:monotonicity identity-integrated}. Also, if $h$ is $k$-symmetric with invariance subspace $L$ then the energy of $h$ in the direction of any $v \in L$ vanishes. Hence, it is very natural to give the following definition, which is the starting point for introducing the quantitative stratification. 

\begin{definition} \label{def:quant_sym}
Given a stationary $Q$-harmonic map $u \in W^{1,2}_{loc}(\Omega, \A_{Q}(\N))$, we say that a ball $B_{r}(x)$ with $B_{2r}(x) \subset \Omega$ is $(k, \e)$-symmetric for $u$ if and only if the following conditions hold:
\begin{itemize}
\item[$(a)$] $\theta_{u}(x, 2r) - \theta_{u}(x, r) < \e$;

\item[$(b)$] there exists a linear subspace $L \subset \R^{m}$ with $\dim(L) = k$ such that
\[
r^{2-m} \int_{B_{r}(x)} |D_{L}u(y)|^{2} \, \mathrm{d}y \leq \e,
\]
where
\[
\int_{B_{r}(x)} |D_{L}u(y)|^{2} \, \mathrm{d}y := \int_{B_{r}(x)} \sum_{i=1}^{k}  |D_{e_i}u(y)|^{2} \, \mathrm{d}y,
\]
for any orthonormal basis $\{ e_{i} \}_{i=1}^{k}$ of $L$.
\end{itemize}
\end{definition}

\begin{remark}
Observe that the conditions $(a)$ and $(b)$ above are scale-invariant in the following sense. For $x \in \Omega$ and $r > 0$ such that $B_{2r}(x) \subset \Omega$, consider the blow-up map $T_{x,r}^{u}$ given by
\[
T_{x,r}^{u}(y) := u(x + ry).
\]
Then, $B_{r}(x)$ is $(k,\e)$-symmetric with respect to $u$ if and only if $B_{1}(0)$ is $(k, \e)$-symmetric with respect to $T_{x,r}^{u}$. 
\end{remark}

\begin{definition}[Quantitative stratification] \label{def:stratification}

Let $u \in W^{1,2}_{loc}(\Omega, \A_{Q}(\N))$ be stationary $Q$-harmonic, and let $\e, r>0$ and $k \in \{0,\dots,m\}$. We will set
\[
\cS_{\e,r}^{k}(u) := \left\lbrace x \in \Omega \, \colon \, \mbox{for no $r \leq s < 1$ the ball $B_{s}(x)$ is $(k+1,\e)$-symmetric with respect to $u$}\right\rbrace.
\]
It is an immediate consequence of the definition that if $k' \leq k$, $\e' \geq \e$ and $r' \leq r$ then
\[
\cS_{\e', r'}^{k'}(u) \subseteq \cS_{\e,r}^{k}(u).
\]
Hence, we can set:
\[
\cS_{\e}^{k}(u) := \bigcap_{r > 0} \cS_{\e,r}^{k}(u), \quad \quad \cS^{k}(u) := \bigcup_{\e > 0} \cS_{\e}^{k}(u).
\]

\end{definition}

\begin{remark}\label{rem_sing}
Note that from Theorem \ref{Q_eps_reg} one easily deduces that if $u \in W^{1,2}_{loc}(\Omega, \A_{Q}(\N))$ is energy minimizing and a ball $B_{r}(x)$ is $(m, \e_{0})$-symmetric for $u$, with the $\e_{0}$ given in there, then $u$ is H\"older continuous in $B_{\frac{r}{2}}(x)$, and thus in particular $\cS^{k}(u) \cap B_{\frac{r}{2}}(x) = \emptyset$ for every $k \leq m-1$. In fact, we can also conclude that $\cS^{m}(u) \setminus \cS^{m-1}(u)$ coincides with the set $\reg_{H}(u) := \Omega \setminus \sing_{H}(u)$ of points of H\"older continuity for $u$, and $\sing_{H}(u)=\cS^{m-1}(u)$.
\end{remark}

Also observe that we have used the same symbol $\cS^{k}(u)$ to denote both the set $\bigcup_{\e > 0} \cS^{k}_{\e}(u)$ coming from the quantitative stratification and the standard singular stratum defined in \eqref{standard_singular_strata}. The choice is completely justified by the following proposition.

\begin{proposition} \label{prop:charac_strata}
Let $u \in W^{1,2}(\Omega, \A_{Q}(\N))$ be energy minimizing. Then
\[
\cS^{k}(u) = \left\lbrace x \, \colon \, \mbox{no tangent map to $u$ at $x$ is $(k+1)$-symmetric} \right\rbrace.
\]
\end{proposition}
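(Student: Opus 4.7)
The plan is to prove the two inclusions separately, using Theorem \ref{HM_Q_compactness} to transfer information between a blow-up sequence and its tangent-map limit, together with the monotonicity formula \eqref{eq:monotonicity identity-integrated lower bound} to detect homogeneity in the limit. A standard accompanying fact that I will use throughout is that the $L^{2}$-and-energy convergence provided by Theorem \ref{HM_Q_compactness} upgrades to strong $L^{2}$ convergence of the directional metric derivatives on compact subsets.

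For the inclusion $\cS^{k}(u) \subseteq \cur{x \, : \, \text{no tangent map at $x$ is $(k+1)$-symmetric}}$ I would argue by contrapositive. Suppose $\phi$ is a $(k+1)$-symmetric tangent map at $x$, arising as the limit of $T^{u}_{x,r_h}$ for some $r_h \downarrow 0$. By homogeneity of $\phi$ with respect to $0$ and by $L$-invariance for some $(k+1)$-subspace $L$, one has $\theta_\phi(0,2)-\theta_\phi(0,1)=0$ and $\int_{B_{1}(0)}\abs{D_{L}\phi}^{2}=0$. Passing to the limit using the strong $L^{2}$ convergence of the directional metric derivatives of $T^{u}_{x,r_h}$ to those of $\phi$, for any fixed $\e>0$ one has both $\theta_{u}(x,2r_h)-\theta_{u}(x,r_h)<\e$ and $r_h^{2-m}\int_{B_{r_h}(x)}\abs{D_{L}u}^{2}<\e$ for all large $h$. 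Hence $B_{r_h}(x)$ is $(k+1,\e)$-symmetric, contradicting $x\in\cS^{k}_{\e}(u)$ for every $\e>0$.

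For the converse, suppose $x\notin\cS^{k}(u)$. Then for every $j\in\Na$, the fact that $x\notin\cS^{k}_{1/j}(u)=\bigcap_{r>0}\cS^{k}_{1/j,r}(u)$ yields some scale $s_j\in(0,1)$ and a $(k+1)$-subspace $L_j$ such that $B_{s_j}(x)$ is $(k+1,1/j)$-symmetric; by compactness of the Grassmannian, up to a subsequence $L_j\to L$. If a subsequence $s_j\downarrow 0$, the rescalings $T^{u}_{x,s_j}$ converge (along a further subsequence) to a tangent map $\phi$ at $x$, and passing conditions (a) and (b) to the limit gives $\theta_\phi(0,2)-\theta_\phi(0,1)=0$ (forcing homogeneity via \eqref{eq:monotonicity identity-integrated lower bound}) and $\int_{B_{1}(0)}\abs{D_{L}\phi}^{2}=0$, so $\phi$ is $(k+1)$-symmetric. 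If instead $s_j\to s_*>0$, then letting $j\to\infty$ in $(s_j)^{2-m}\int_{B_{s_j}(x)}\abs{D_{L_j}u}^{2}\leq 1/j$ forces $\int_{B_{s_*}(x)}\abs{D_{L}u}^{2}=0$, so $u$ itself is $L$-invariant on $B_{s_*}(x)$. Every rescaling $T^{u}_{x,r}$ with $r<s_*$ is then $L$-invariant on $B_{1}(0)$, and since $\theta_{u}(x,2r)-\theta_{u}(x,r)\to 0$ as $r\downarrow 0$ by monotonicity, one can pick $t_h\downarrow 0$ at which $B_{t_h}(x)$ is $(k+1,\e_h)$-symmetric with $\e_h\downarrow 0$, reducing to the previous case.

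The main technical obstacle I anticipate is the strong $L^{2}$ convergence of the directional metric derivatives along the blow-up sequence, needed to transfer condition (b) to the limit. This is however a standard consequence of Theorem \ref{HM_Q_compactness}: convergence of the total Dirichlet energies $\int\abs{DT^{u}_{x,r_h}}^{2}\to\int\abs{D\phi}^{2}$ combined with weak lower semicontinuity of each $\int\abs{D_i\cdot}^{2}$ forces $\int\abs{D_{i}T^{u}_{x,r_h}}^{2}\to\int\abs{D_{i}\phi}^{2}$, which together with the weak $L^{2}$ compactness of $\abs{D_{i}T^{u}_{x,r_h}}$ yields strong $L^{2}$ convergence; picking orthonormal bases $\{e^{h}_{i}\}$ of $L_h$ with $e^{h}_{i}\to e_{i}$ then delivers $\int\abs{D_{L_h}T^{u}_{x,r_h}}^{2}\to\int\abs{D_{L}\phi}^{2}$.
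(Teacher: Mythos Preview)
Your proof is correct and follows essentially the same approach as the paper's: both inclusions are handled via the compactness theorem together with the monotonicity identity \eqref{eq:monotonicity identity-integrated lower bound}, with convergence of the directional energies taking care of condition~(b). The only minor deviation is in the case $s_j\to s_*>0$, where the paper observes directly that the limit $\phi$ coincides with (a rescaling of) $u$ on $B_{s_*}(x)$, so that \emph{every} tangent map at $x$ equals the $(k+1)$-symmetric $\phi$, whereas you extract only the $L$-invariance of $u$ and then rerun the $s_j\downarrow 0$ argument along a new sequence; note also that for your purposes you only need convergence of the directional \emph{energies} $\int_{B_1}|D_L u_j|^2\to\int_{B_1}|D_L\phi|^2$ (which follows from total energy convergence plus lower semicontinuity of $\int|D_L\cdot|^2$ and $\int|D_{L^\perp}\cdot|^2$), not full strong $L^2$ convergence of the functions $|D_i u_j|$.
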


\begin{proof}
First recall that for any $x \in \Omega$ there exists at least one tangent map $\phi \in W^{1,2}_{loc}(\R^{m}, \A_{Q}(\N))$ to $u$ at $x$, and that all tangent maps are energy minimizing and $0$-symmetric.

Now, let $x$ be a point such that there exists a tangent map $\phi$ to $u$ at $x$ which is $(k+1)$-symmetric. Then, there is a sequence $r_{j} \searrow 0$ of radii such that the corresponding sequence of blow-up maps $u_{j} := T_{x,r_{j}}^{u}$ satisfies $\G(u_{j}, \phi) \to 0$ in $L^{2}_{loc}(\R^{m})$ as $j \to \infty$ and furthermore
\[
\theta_{\phi}(0,\rho) = \lim_{j \to \infty} \theta_{u_{j}}(0,\rho) \quad \forall\, \rho > 0.
\]
In particular, since $\phi$ is homogeneous with respect to the origin, and thus $\theta_{\phi}(0,2) - \theta_{\phi}(0, 1) = 0$ by \eqref{eq:monotonicity identity-integrated}, for any $\e > 0$ there exists $j_{0} = j_{0}(\e)$ such that
\begin{equation} \label{eq:pinching}
\theta_{u_{j}}(0,2) - \theta_{u_{j}}(0,1) < \e \quad \quad \forall \, j \geq j_{0}.
\end{equation}
Moreover, since $\phi$ is $(k+1)$-symmetric there exists a linear subspace $L \subset \R^{m}$ with $\dim(L) = k+1$ such that $D_{L}\phi = Q \llbracket 0 \rrbracket$ a.e. in $\R^{m}$. Hence, from the convergence of energy for minimizers we deduce that if $j_{0}$ is chosen suitably large then also
\begin{equation} \label{eq:symmetry}
\int_{B_{1}(0)} |D_{L}u_{j}|^{2} \, \mathrm{d}y \leq \e \quad \quad \forall \, j \geq j_{0}.
\end{equation}
Together, equations \eqref{eq:pinching} and \eqref{eq:symmetry} imply that $B_{r_{j}}(x)$ is $(k+1,\e)$-symmetric for $u$ if $j \geq j_{0}(\e)$, and thus $x \notin \cS^{k}(u)$. This proves the first inclusion, namely
\[
\cS^{k}(u) \subseteq \left\lbrace x \in \Omega \, \colon \, \mbox{no tangent map to $u$ at $x$ is $(k+1)$-symmetric} \right\rbrace.
\]
In order to prove the other inclusion, assume that $x \notin \cS^{k}(u)$. Then, for every $j \in \mathbb{N}$ there exist a radius $r_{j} > 0$ and a $(k+1)$-dimensional linear subspace $L_{j} \subset \R^{m}$ such that if we set $u_{j} := T^{u}_{x,r_j}$ then
\begin{equation} \label{eq:pinching2}
\theta_{u_j}(0,2) - \theta_{u_{j}}(0,1) < \frac{1}{j}
\end{equation} 
and
\begin{equation} \label{eq:symmetry2}
\int_{B_{1}(0)} |D_{L_j}u_{j}|^{2} \, \mathrm{d}y \leq \frac{1}{j}.
\end{equation}
Modulo a simple right composition of each $u_{j}$ with a rotation, we can assume that the invariant subspace is a fixed $(k+1)$-dimensional subspace $L \subset \R^{m}$. By the compactness theorem for $Q$-valued energy minimizing maps, a subsequence (not relabeled) of the $u_{j}$'s converges in $L^{2}_{loc}$ and in energy to an energy minimizing map $\phi$. From \eqref{eq:pinching2} together with \eqref{eq:monotonicity identity-integrated lower bound} we deduce that the limit map $\phi$ is homogeneous with respect to the origin. Furthermore, \eqref{eq:symmetry2} implies that $\phi$ is invariant along the subspace $L$, and thus $\phi$ is $(k+1)$-symmetric. Now, if a subsequence of the $r_{j}$'s converges to $0$ then $\phi$ is by definition a tangent map to $u$ at $x$. If, on the other hand, the $r_{j}$'s are bounded away from $0$ then $u = \phi$ on a ball of positive radius centered at $x$, and thus, in particular, all tangent maps to $u$ at $x$ coincide with $\phi$. In either case, this completes the proof.
\end{proof}

\begin{corollary}\label{prop: empty n-2 strata}
Let $u \in W^{1,2}(\Omega, \A_{Q}(\N))$ be energy minimizing. Then
\[ \cS^{m-1}(u)\setminus \cS^{m-3}(u) = \emptyset.\]
\end{corollary}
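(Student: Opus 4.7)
The plan is to combine Proposition \ref{prop:charac_strata}, which characterizes the quantitative strata via tangent maps, with the classical Hausdorff dimension bound $\dim_{\Ha}(\sing_{H}(u)) \leq m-3$ already recorded in the text. The goal is to show $\cS^{m-1}(u) \subseteq \cS^{m-3}(u)$, that is, for any $x \in \cS^{m-1}(u)$ no tangent map of $u$ at $x$ can be $(m-2)$-symmetric.

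Fix $x \in \cS^{m-1}(u) = \sing_{H}(u)$ (by Remark \ref{rem_sing}) and suppose for contradiction that some tangent map $\phi \in W^{1,2}_{loc}(\R^{m}, \I{Q}(\N))$ to $u$ at $x$ is $(m-2)$-symmetric, with invariance subspace $L \subset \R^{m}$ of dimension $m-2$. By Theorem \ref{Q_eps_reg} and Remark \ref{rem_sing}, $x \in \sing_{H}(u)$ forces $\Theta_{u}(x) > 0$; since every tangent map satisfies $\Theta_{\phi}(0) = \Theta_{u}(x)$, we have $\Theta_{\phi}(0) > 0$ and in particular $0 \in \sing_{H}(\phi)$.

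Now use the translation invariance $\phi(y+v) = \phi(y)$ for all $v \in L$. Together with the $0$-homogeneity of the tangent map this gives $\Theta_{\phi}(v) = \Theta_{\phi}(0)$ for every $v \in L$, i.e.\ $L \subseteq S(\phi)$. In particular, every point of $L$ is singular for $\phi$, so $L \subseteq \sing_{H}(\phi)$. On the other hand, $\phi$ is itself $Q$-valued energy minimizing, hence the classical Federer--Almgren dimension reduction recalled right before the statement of the corollary yields $\dim_{\Ha}(\sing_{H}(\phi)) \leq m-3$. This contradicts $L \subseteq \sing_{H}(\phi)$ with $\dim L = m-2$.

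Hence no tangent map at $x$ is $(m-2)$-symmetric, which by Proposition \ref{prop:charac_strata} is precisely the statement $x \in \cS^{m-3}(u)$. Since the monotonicity $\cS^{m-3}(u) \subseteq \cS^{m-1}(u)$ is built into Definition \ref{def:stratification}, we obtain $\cS^{m-1}(u) = \cS^{m-3}(u)$, and in particular $\cS^{m-1}(u) \setminus \cS^{m-3}(u) = \emptyset$. There is no real obstacle here beyond quoting the already-established dimension bound for $\sing_H$ of an energy minimizer; the content of the corollary is simply that the quantitative strata reproduce the known fact that no tangent map at a singular point can enjoy $(m-2)$ directions of translation invariance.
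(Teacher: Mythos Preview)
Your proof is correct and follows essentially the same approach as the paper. The paper's proof simply cites Proposition~\ref{prop:charac_strata} together with the fact, already established in the paragraph following Theorem~\ref{Q_eps_reg}, that $\cS^{m-1}(u) = \cS^{m-3}(u)$ for the standard stratification; you have merely unpacked that paragraph's argument (namely, that an $(m-2)$-symmetric tangent map at a singular point would force an $(m-2)$-plane into $\sing_H(\phi)$, contradicting $\Ha^{m-2}(\sing_H(\phi))=0$).
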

\begin{proof}
This is a direct consequence of the Proposition \ref{prop:charac_strata}, since the identity $\cS^{m-1}(u) = \cS^{m-3}(u)$ holds for the standard stratification.
\end{proof}

The definition of quantitative stratification that we have proposed differs from the original one introduced by Cheeger and Naber in \cite{ChNa1,ChNa2} and then used by Naber and Valtorta in \cite{naber-valtorta:harmonic}. Of course, the Cheeger-Naber quantitative stratification can be without any difficulties extended to the $Q$-valued context. We recall the definition here, in order to compare it with Definition \ref{def:stratification}.

\begin{definition} \label{def:old_strata}
Let $u \in W^{1,2}_{loc}(\Omega, \A_{Q}(\N))$, and fix $k \in \{0,\dots,m\}$ and $\e > 0$. A ball $B_{r}(x)$ with $B_{2r}(x) \subset \Omega$ is said to be $(k,\e)$-symmetric for $u$ \emph{in the sense of Cheeger-Naber}, or briefly [CN] $(k,\e)$-symmetric, if there exists some $k$-symmetric map $h \in W^{1,2}_{loc}(\R^{m}, \A_{Q}(\N))$ such that
\begin{equation} \label{eq:old_strata}
\fint_{B_{r}(x)} \G(u(y), h(y-x))^{2} \, \mathrm{d}y \leq \e.
\end{equation} 
The definitions of [CN] $(\e,r)$-singular strata and [CN] $\e$-singular strata can be then straightforwardly obtained according to the definition of [CN] $(k,\e)$-symmetry. In particular, $\cS^{k}_{{\rm [CN]}}(u)$ classically consists of all points $x \in \Omega$ having the property that there exists $\e > 0$ such that no ball $B_{r}(x)$ is [CN] $(k+1,\e)$-symmetric with respect to $u$.
\end{definition}

The following simple proposition shows that if $u$ is a minimizing $Q$-valued map then Definition \ref{def:quant_sym} and Definition \ref{def:old_strata} are equivalent, in the sense that they generate the same stratification. In order to fix the ideas, for the vast majority of the following results we will work under the following assumption.

\begin{ipotesi} \label{ass:harmonic}
Assume that $u \in W^{1,2}(B_{10}(0), \A_{Q}(\N))$ is a $Q$-valued energy minimizing map, and that $\E(u, B_{10}(0)) \leq \Lambda$. 
\end{ipotesi}

\begin{proposition}
For every $\e > 0$ there exists $\delta = \delta(m,\N,N,Q,\Lambda,\e) > 0$ such that for any $u$ satisfying Assumption \ref{ass:harmonic}:
\begin{itemize}
\item[$(i)$] if $B_{r}(x)$ is $(k,\delta)$-symmetric for $u$, then it is ${\rm [CN]}$ $(k,\e)$-symmetric for $u$;

\item[$(ii)$] if $B_{r}(x)$ is ${\rm [CN]}$ $(k,\delta)$-symmetric for $u$, then it is $(k,\e)$-symmetric for $u$.
\end{itemize} 
\end{proposition}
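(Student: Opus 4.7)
The plan is to establish both implications by a compactness-and-contradiction scheme anchored on Theorem \ref{HM_Q_compactness}. In both directions, the scale-invariance of the two notions of $(k, \e)$-symmetry recorded in the remark after Definition \ref{def:quant_sym} lets one reduce to the normalized setting in which one has a sequence of rescaled minimizers $\tilde u_j := T^{u_j}_{x_j, r_j}$ on $B_1(0)$ with uniformly bounded energy on every $B_R(0) \Subset \R^m$, the bound being provided by Assumption \ref{ass:harmonic} together with monotonicity of $\theta$. Extracting a subsequence via Theorem \ref{HM_Q_compactness}, I obtain $\tilde u_j \to u$ in $L^2_{loc}$ and in energy to a minimizer $u$, and I can further arrange, passing to a subsequence, that any invariance subspaces $L_j$ appearing in the hypotheses converge in the Grassmannian to a fixed $k$-plane $L$.

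For $(i)$, suppose for contradiction that a sequence of $(k, 1/j)$-symmetric balls fails to be \textup{[CN]} $(k, \e_0)$-symmetric. The pinching in condition (a) of Definition \ref{def:quant_sym} combined with the lower bound \eqref{eq:monotonicity identity-integrated lower bound} forces $\int_{B_{1/2}(0)} |y|\, \Abs{D_{r_0}\tilde u_j}^2 \, \mathrm{d}y \to 0$, and lower semicontinuity of the weighted Dirichlet energy under $L^2$ convergence of $Q$-valued Sobolev maps then yields $D_{r_0} u \equiv 0$ on $B_{1/2}(0)$. Condition (b) analogously produces $D_L u \equiv 0$ on $B_1(0)$, relying on the fact that the energy convergence $\int |D\tilde u_j|^2 \to \int |Du|^2$, paired with the $Q$-valued lower semicontinuity of each $\int |D_i \cdot |^2$, upgrades to the componentwise identity $\lim_j \int |D_i \tilde u_j|^2 = \int |D_i u|^2$. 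The $L$-invariant, $0$-homogeneous extension $h$ of $u|_{B_{1/2}(0)}$ to $\R^m$ is then a $k$-symmetric map in $W^{1,2}_{loc}(\R^m, \A_Q(\N))$, and since $\tilde u_j \to u = h$ in $L^2(B_1(0))$ one has $\fint_{B_1(0)} \G(\tilde u_j, h)^2 \to 0$, contradicting the assumed failure of the \textup{[CN]} condition.

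For $(ii)$, starting from \textup{[CN]} comparison $k$-symmetric maps $h_j$ with $\fint_{B_1(0)} \G(\tilde u_j, h_j)^2 \leq 1/j$, one has $h_j \to u$ in $L^2(B_1(0))$; the $0$-homogeneity and $L_j$-invariance of each $h_j$ then pass to the limit and force $u|_{B_1(0)}$ to coincide a.e. with a $k$-symmetric map $\bar u$, namely its own $L$-invariant homogeneous extension. In order to convert this into the pinching (a) of $(k, \e)$-symmetry I need to propagate the identity $u = \bar u$ from $B_1(0)$ to the larger ball $B_2(0)$; this is the main obstacle of the argument, since the \textup{[CN]} hypothesis provides $L^2$ closeness only on $B_1(0)$ while condition (a) of Definition \ref{def:quant_sym} concerns the scale $2r$. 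My plan for this step is to use unique continuation for minimizing $Q$-harmonic maps applied to $u$ and $\bar u$: the latter is itself a minimizer (it is a tangent map of $u$ at $0$, being the locally uniform limit of $T^u_{0,r}$ as $r \downarrow 0$ by homogeneity of $u|_{B_1(0)}$), the two minimizers coincide on the open set $B_1(0)$, and the resulting identity propagates. Once $u = \bar u$ on $B_2(0)$, formula \eqref{eq:monotonicity identity-integrated} gives $\theta_u(0,2) = \theta_u(0,1)$ while $D_L u \equiv 0$ already holds on $B_1(0)$; convergence of the mollified and directional energies of $\tilde u_j$ then yields both (a) and (b) for $\tilde u_j$ with $j$ large, contradicting the failure of $(k, \e_0)$-symmetry.
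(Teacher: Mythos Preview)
Your compactness-and-contradiction scheme is exactly the paper's approach, and your handling of part $(i)$ mirrors it closely. One small slip: applying \eqref{eq:monotonicity identity-integrated lower bound} with the pinching $\theta(0,2)-\theta(0,1)<1/j$ corresponds to the choice $r=2$ there, so the integral you control is over $B_1(0)$, not $B_{1/2}(0)$. This matters, because the contradiction in $(i)$ requires $\fint_{B_1}\G(\tilde u_j,h)^2\to 0$, hence $u=h$ on all of $B_1$; with only $B_{1/2}$ you would not reach the desired [CN] statement on $B_1$. Once corrected, your argument for $(i)$ is fine (and your route through lower semicontinuity plus the componentwise upgrade is equivalent to the paper's direct use of energy convergence from Theorem~\ref{HM_Q_compactness}).

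For part $(ii)$ you have put your finger on the genuine subtlety: the [CN] hypothesis constrains the limit $u$ only on $B_1$, whereas condition (a) of Definition~\ref{def:quant_sym} asks for $\theta_u(0,2)-\theta_u(0,1)=0$, which involves $u$ on $B_2$. The paper's proof simply asserts ``$u$ is $k$-symmetric'' and proceeds, so you are being more careful than the paper at this point. However, your proposed remedy---invoking unique continuation for $Q$-valued energy minimizers into $\N$---is not a result established in the paper or in the cited reference \cite{Hir16}; for $Q>1$ and manifold targets this is a nontrivial statement that you would need to supply or cite independently. Without it, the step ``$u=\bar u$ on $B_2$'' is unjustified, and the argument for condition (a) does not close. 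Note that your identification of $\bar u$ as a tangent map of $u$ at $0$ (hence itself minimizing) is correct, so the two-minimizer setup for unique continuation is sound; the gap is purely in the availability of the unique continuation principle itself.
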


\begin{proof}
Since both the definitions of symmetry are scale-invariant, modulo translations and dilations it suffices to show the validity of the proposition for $x = 0$ and $r = 1$. We start proving the first claim. Assume by contradiction that there exist $\e_{0} > 0$ and a sequence $\{ u_{j} \}_{j\in\mathbb{N}}$ of maps as in Assumption \ref{ass:harmonic} for which the ball $B_{1}$ is $(k,j^{-1})$-symmetric but such that
\begin{equation} \label{contra:1}
\fint_{B_{1}} \G(u_{j}(y), h(y))^{2} \, \mathrm{d}y > \e_{0} \quad \mbox{for every $k$-symmetric function $h$, for every $j \in \mathbb{N}$.}
\end{equation}
Modulo rotations, we can assume that the $k$-dimensional linear subspace $L$ such that condition $(b)$ in Definition \ref{def:quant_sym} is satisfied is fixed along the sequence: namely, we can assume without loss of generality that
\[
\theta_{u_{j}}(0,2) - \theta_{u_{j}}(0,1) < j^{-1}
\]
and that
\[
\int_{B_{1}} |D_{L}u_{j}(y)|^{2} \, \mathrm{d}y \leq j^{-1}
\]
for some fixed $k$-dimensional plane $L \subset \R^{m}$. Now, the compactness theorem for $Q$-valued energy minimizing maps implies that a subsequence of the $u_{j}$'s (not relabeled) converges in $L^{2}(B_{10}(0), \A_{Q}(\R^N))$ and in energy to a $Q$-valued energy minimizing map $h$ for which
\[
\theta_{h}(0,2) - \theta_{h}(0,1) = 0
\]
and
\[
D_{L}h = Q \llbracket 0 \rrbracket \quad \mbox{a.e. in $B_{1}$}.
\]
Hence, by \eqref{eq:monotonicity identity-integrated lower bound} the map $\left.h\right|_{B_1}$ can be extended to a $k$-symmetric map (which we still denote by $h$), and the fact that $\G(u_{j},h) \to 0$ in $L^{2}(B_{1})$ contradicts \eqref{contra:1}.

For the converse, assume again by contradiction that there exist $\e_{0}$, a sequence $\{ u_{j} \}_{j\in\mathbb{N}}$ of maps as in Assumption \ref{ass:harmonic} and a sequence $\{ h_{j} \}_{j\in\mathbb{N}} \subset W^{1,2}_{loc}(\R^{m}, \A_{Q}(\N))$ of $k$-symmetric maps such that
\begin{equation} \label{contra:2}
\fint_{B_{1}} \G(u_{j}(y), h_{j}(y))^{2} \, \mathrm{d}y \leq j^{-1}
\end{equation} 
and such that the ball $B_{1}$ is not $(k,\e_{0})$-symmetric. Again, after applying suitable rotations we can assume that the invariant subspace for the maps $h_{j}$ is a fixed $k$-dimensional plane $L \subset \R^{m}$. By compactness, the maps $u_{j}$ converge, up to subsequences, to an energy minimizing $u \in W^{1,2}(B_{10}(0), \A_{Q}(\N))$. By \eqref{contra:2}, also $h_{j} \to u$ strongly in $L^{2}(B_{1}, \A_{Q}(\N))$. Since the space of $k$-symmetric maps is $L^{2}$-closed, we deduce that $u$ is $k$-symmetric. Since the $u_{j}$'s converge to $u$ also in energy, the ball $B_{1}$ must be $(k,\e_{0})$-symmetric for $u_{j}$ if $j$ is sufficiently large, which is the required contradiction.
\end{proof}

\begin{corollary}
Let $u$ satisfy Assumption \ref{ass:harmonic}. Then, for every $k \in \{0, \dots, m\}$ one has
\begin{equation} \label{equivalence_strata}
\cS^{k}(u) = \cS^{k}_{{\rm [CN]}}(u).
\end{equation}
\end{corollary}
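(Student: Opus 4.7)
The corollary asserts that the two versions of the quantitative stratification coincide for minimizing maps, and it should follow immediately from the equivalence of the two notions of $(k,\e)$-symmetry established in the preceding proposition, together with an unpacking of the definitions. The plan is to argue by contrapositive, proving the two inclusions separately, and then to observe that the scale-restriction ``$r \le s < 1$'' plays no essential role because $\cS^{k}(u)$ and $\cS^{k}_{{\rm [CN]}}(u)$ are defined by intersecting over all $r > 0$.

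First I would unravel the definitions: a point $x$ lies in $\cS^{k}(u)$ if and only if there exists $\e_0 > 0$ such that no ball $B_{s}(x)$ with $s \in (0,1)$ is $(k+1,\e_0)$-symmetric for $u$; analogously, $x \in \cS^{k}_{{\rm [CN]}}(u)$ if and only if there exists $\e_0' > 0$ such that no ball $B_{s}(x)$ with $s \in (0,1)$ is [CN] $(k+1,\e_0')$-symmetric. To prove the inclusion $\cS^{k}(u) \subseteq \cS^{k}_{{\rm [CN]}}(u)$, I would pick $x \in \cS^{k}(u)$ with corresponding threshold $\e_0 > 0$, apply part $(ii)$ of the proposition to produce $\delta = \delta(m,\N,N,Q,\Lambda,\e_0) > 0$ such that any [CN] $(k+1,\delta)$-symmetric ball is $(k+1,\e_0)$-symmetric, and then contrapose: if no ball $B_s(x)$ is $(k+1,\e_0)$-symmetric, then no ball $B_s(x)$ can be [CN] $(k+1,\delta)$-symmetric, whence $x \in \cS^{k}_{{\rm [CN]}}(u)$ with threshold $\delta$.

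For the reverse inclusion $\cS^{k}_{{\rm [CN]}}(u) \subseteq \cS^{k}(u)$, I would do the symmetric argument using part $(i)$ of the proposition: starting from $x \in \cS^{k}_{{\rm [CN]}}(u)$ with threshold $\e_0'$, part $(i)$ yields $\delta' = \delta'(m,\N,N,Q,\Lambda,\e_0')$ such that $(k+1,\delta')$-symmetry implies [CN] $(k+1,\e_0')$-symmetry. The contrapositive then shows that no ball $B_s(x)$ can be $(k+1,\delta')$-symmetric, placing $x$ in $\cS^{k}(u)$. Combining both inclusions yields the identity \eqref{equivalence_strata}.

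There is really no obstacle here: since Assumption \ref{ass:harmonic} gives us the uniform energy bound $\Lambda$ needed to invoke the proposition at every scale (the $(k,\e)$-symmetry conditions are scale invariant, and the rescaled map $T^{u}_{x,s}$ inherits a uniform energy bound from monotonicity), the proof is just a contrapositive bookkeeping exercise. The only mild subtlety is making sure the scale restriction $s < 1$ in Definition \ref{def:stratification} matches the corresponding one in Definition \ref{def:old_strata}, which is immediate since both strata are obtained as intersections over all lower cut-offs $r \downarrow 0$.
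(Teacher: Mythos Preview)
Your proposal is correct and matches the paper's approach: the corollary is stated without proof in the paper, being an immediate consequence of parts $(i)$ and $(ii)$ of the preceding proposition, and your contrapositive argument is precisely the intended unpacking.
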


Using more quantitative estimates, the comparison between the two notions of quantitative symmetry can be carried to the case of stationary $Q$-harmonic maps.

\begin{proposition}\label{prop:comparision_to_old_stata}
There exists a constant $C = C(m,\N,N,Q) > 0$ with the following property. Let $u \in W^{1,2}_{loc}(\Omega,\A_{Q}(\N))$ be a stationary $Q$-harmonic map. If a ball $B_r(x) \Subset \Omega$ is $(k,\e)$-symmetric for $u$, then $B_{\frac{r}{4}}(x)$ is $(k,C \abs{\e \ln(\e)})$-symmetric for $u$ in the sense of Cheeger-Naber.
\end{proposition}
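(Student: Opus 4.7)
By the scale- and translation-invariance of both notions of symmetry, it suffices to treat the case $x = 0$, $r = 1$. The $(k,\e)$-symmetry of $B_{1}(0)$ gives two pieces of quantitative information: condition $(a)$ of Definition \ref{def:quant_sym}, combined with the monotonicity estimate \eqref{eq:monotonicity identity-integrated lower bound}, yields the weighted radial pinching $\int_{B_{1/2}(0)} |y| \, |D_{r_{0}}u(y)|^{2} \, dy \leq C\e$, while condition $(b)$ gives directly $\int_{B_{1}(0)} |D_{L}u|^{2}\,dy \leq \e$ for some $k$-plane $L \subset \R^{m}$. The goal is to construct a $k$-symmetric $h$ with $\fint_{B_{1/4}(0)} \G(u(y),h(y))^{2}\,dy \leq C|\e\ln\e|$.

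For the construction of $h$, decompose $y = y^{L}+y^{\perp}$ relative to $L \oplus L^{\perp}$ and set $\eta(y) := y^{\perp}/|y^{\perp}|$ for $y \notin L$. By slicing theory for $Q$-valued Sobolev maps and a Chebyshev/Fubini argument, one picks a good radius $\rho \in [\tfrac{1}{2},1]$ for which the trace $u|_{\partial B_{\rho}}$ is a $W^{1,2}$ $Q$-valued map whose sphere-averaged energies in the $L$- and radial directions are controlled by a constant multiple of $\e$. Define
\[
h(y) := u\bigl(\rho\,\eta(y)\bigr),
\]
which is well-defined a.e.\ on $\R^{m}$, is constant along any translate in $L$, and is homogeneous with respect to the origin, hence $k$-symmetric with invariance subspace $L$.

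The main estimate then proceeds by bounding $\G(u(y),h(y))$ via the triangle inequality through the intermediate point $\rho y/|y| \in \partial B_{\rho}$. The first leg, a radial motion from $y$ to $\rho y/|y|$, is handled by the $Q$-valued fundamental theorem of calculus along rays combined with Fubini in polar coordinates; the weighted radial bound yields a contribution at most $C\e$ to $\int_{B_{1/4}}\G(u,h)^{2}\,dy$. The second leg, an arc on $\partial B_{\rho}$ from $\rho y/|y|$ to $\rho \eta(y)$, is more delicate: at each point of the arc the tangent decomposes into an $L$-component (controlled by $|D_{L}u|$) and an $L^{\perp}$-component (controlled only by the full $|Du|$, i.e.\ by the energy $\Lambda$). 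To make the estimate quantitative, split $B_{1/4} = N_{\delta} \cup G_{\delta}$ with $N_{\delta} := \{|y^{\perp}| < \delta\}$. On $G_{\delta}$, a Fubini reorganization on the sphere, exploiting the spherical join decomposition $\partial B_{\rho} \cong S^{m-k-1} \ast S^{k-1}$ together with a dyadic integration in the distance to the spine, yields a contribution of order $C\e \ln(1/\delta)$. On $N_{\delta}$, use the crude pointwise bound $\G(u(y),h(y))^{2} \leq C(\N,Q)$, valid because $u$ takes values in the compact manifold $\N$, together with the volume estimate $|N_{\delta} \cap B_{1/4}| \leq C\delta^{m-k} \leq C\delta$ (since $m-k \geq 1$ as $L$ is a proper subspace). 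Choosing $\delta \sim \e$ balances the two contributions and produces the desired bound $C|\e\ln\e|$.

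The main obstacle is the arc estimate on the sphere: the $L^{\perp}$-component of the arc's tangent is not controlled in an $\e$-sense and the estimate naturally degenerates as one approaches the spine $L$, so the truncation at the scale $\delta$ together with the integrable but logarithmically divergent density $1/|y^{\perp}|$ along the spine is exactly the mechanism that generates the $\ln(1/\e)$ factor in the conclusion. In the $Q$-valued framework, all the pointwise path integrals (along rays and along arcs on $\partial B_{\rho}$) must be justified via the metric-derivative formalism and the slicing and Poincar\'e inequalities of \cite{DLS11a}, while the compactness of $\N$ is crucial to obtain the uniform pointwise bound on $\G(u,h)$ used in the near-spine region.
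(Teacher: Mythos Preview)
Your overall strategy matches the paper's: build an explicit $k$-symmetric comparison map $h$ from values of $u$, estimate $\G(u,h)$ by a path/triangle inequality, truncate at scale $\delta$ near the spine $L$, use compactness of $\N$ for a crude diameter bound on the truncated region, and take $\delta\sim\e$ to balance the terms. Your first (radial) leg is also correct: with the weight $r^{-m}$ in Cauchy--Schwarz one gets $\int_{B_{1/4}}\G(u(y),u(\rho y/|y|))^{2}\,dy\le C\int_{B_1}|y|\,|D_{r_0}u|^{2}\le C\e$.

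The paper, however, does \emph{not} move along a great arc on a sphere; it works in the product $B_t^k\times B_t^{m-k}\subset B_1$ and factors $u(y,z)\to u(y_0',z)\to u(y_0',tz/|z|)$: a Poincar\'e step in the $L$-variables (controlled by $\int|D_Lu|^2\le\e$), followed by a radial homogenization in the $L^\perp$-variable only, controlled by $\int|Du\cdot z|^2\le 2\int|Du\cdot x|^2+C\int|D_Lu|^2\le C\e$. The logarithm appears cleanly from the one-dimensional Hardy-type estimate with a cutoff at $|z|=s_0$.

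Your second leg has a genuine gap. The arc from $\rho y/|y|$ to $\rho y^\perp/|y^\perp|$ has tangent $\gamma'(\phi)=\rho(\cos\phi\,\hat y^L-\sin\phi\,\hat y^\perp)$, and you say the $\hat y^\perp$-component is ``controlled only by the full $|Du|$, i.e.\ by $\Lambda$''. If that were the only control, the arc contribution integrated over $G_\delta$ would be of order $\Lambda$, not $\e$, and no spherical Fubini or dyadic summation can manufacture the factor $\e$ you claim. What is missing is the elementary identity obtained from $\hat y^\perp=\frac{1}{\rho\cos\phi}\gamma(\phi)-\tan\phi\,\hat y^L$, namely
\[
D_{\hat y^\perp}u\;=\;\frac{1}{\cos\phi}\,D_{r_0}u\;-\;\tan\phi\,D_{\hat y^L}u\quad\text{at }\gamma(\phi),
\]
which shows that along the arc every directional derivative is a combination of the $\e$-controlled radial and $L$-derivatives, with coefficients blowing up like $1/\cos\phi$ near the spine. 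With this identity (and your choice of $\rho$ so that both $\int_{\partial B_\rho}|D_Lu|^2$ and $\int_{\partial B_\rho}|D_{r_0}u|^2$ are $\le C\e$) your scheme can in principle be salvaged, the logarithm arising from integrating the $1/\cos^2\phi$ blowup down to $\cos\phi\sim\delta$; but this is considerably more delicate than the product decomposition the paper uses, and your proposal as written does not contain it.
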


\begin{proof}
Without loss of generality, we prove the claim for $x = 0$ and $r = 1$. The idea of the proof is to explicitly construct from $u$ a $k$-symmetric map in $B_{\frac{1}{4}}$. Modulo a rotation, we can assume that the $k$-dimensional plane $L$ of $\e$-almost symmetry is $L=\{x_i = 0 \, \colon \, i > k \} = \R^{k} \times \{0\}$. For convenience, we will denote the variables of $\R^k$ with $y, y'$ and the variables of $\R^{m-k}$ with $z,z'$. The point $x \in \R^{m}$ will be therefore given coordinates $x = (y,z) \in \R^{k} \times \R^{m-k}$. With a slight abuse of notation, we will also sometimes regard $y$ and $z$ as vectors in $\R^{m}$, thus avoiding the cumbersome, although more correct, writings $(y,0)$ and $(0,z)$. Finally, when we integrate a function with respect to the variable $y$ over a ball $B_{r}^{k} \subset \R^{k}$ we will use the notation $B_{r}^{y}$ as domain of integration (and analogously for the variables $y',z,z'$).

In order to construct the $k$-symmetric map, we need to prove two simple inequalities for multiple-valued functions.\\
\begin{itemize}
\item[\textit{Claim 1:}] there exists a constant $C = C(k,N,Q)$ with the following property. For any function $f=f(y,z)$ in $W^{1,2}\left(B^k_1\times B^{m-k}_1, \A_{Q}(\R^N)\right)$, one has
\begin{equation}\label{eq:halfpoincare}
\int_{B^{y'}_1} \int_{B_1^y\times B_1^z} \G( f(y,z), f(y',z))^2 \le C \int_{B_1^y \times B_1^z} \abs{D_L f}^2. 
\end{equation}
\item[\textit{Claim 2:}] Let $0 < s_0< a < 1$. There exists a constant $C = C(d,a,Q)$ such that for any $f \in W^{1,2}(B_1^{d}, \A_{Q}(\R^N))$ and every $a<t\leq 1$ such that $f|_{\partial B_t^{d}} \in W^{1,2}(\partial B_t^{d}, \A_{Q}(\R^N))$ the following holds:
\begin{equation}\label{eq:0-homogeneous comparison} 
\int_{B_1^{d}\setminus B_{s_0}^{d}} \G\left( f(x) , f\left(t \frac{x}{\abs{x}}\right)\right)^2 \le C \abs{\ln(s_0)} \int_{B_1^{d}\setminus B_{s_0}^{d}} \left| D f(x) \cdot x \right|^2. 
\end{equation}
\end{itemize}
\emph{Proof of Claim 1:} The proof is a consequence of the Poincar\'e inequality for multiple valued functions, see \cite[Proposition 4.9]{DLS11a}. Indeed, first observe that for a.e. $z \in B_{1}^{m-k}$ the map $y \mapsto f(y,z) $ is in $W^{1,2}(B_{1}^{k}, \A_{Q}(\R^N))$. Hence, by the aforementioned Poincar\'e inequality, for any such a $z$ there exists a point $\bar{f}(z) \in \A_{Q}(\R^N)$ such that
\[
\int_{B_{1}^{y}} \G(f(y,z), \bar{f}(z))^{2} \leq C \int_{B_{1}^{y}} \abs{D_{L}f(y,z)}^{2}\, ,
\]
where $C = C(k,N,Q)$. Hence, by triangle inequality we infer that
\[
\begin{split}
\int_{B^y_1} \int_{B_1^{y'}} \G(f(y,z) , f(y',z))^2 &\le 2\Ha^{k}(B^{k}_{1}) \left( \int_{B^y_1} \G(f(y,z) , \bar{f}(z))^2  +   \int_{B^{y'}_1} \G(f(y',z) , \bar{f}(z))^2 \right)\\
&\leq C  \int_{B^y_1} \abs{D_Lf(y,z)}^2 \, .
\end{split}
\]
Integrating now this inequality in $z \in B^{m-k}_1$ gives \eqref{eq:halfpoincare}.

\emph{Proof of Claim 2:} First note that for $\Ha^{d-1}$-a.e. $w \in \partial B_{1}^{d}$ the map $r \mapsto g^{w}(r) := f(rw)$ is in $W^{1,2}(\left( 0,1 \right), \A_{Q}(\R^N))$. By the $W^{1,2}$-selection theorem for multiple-valued functions of one variable (cf. \cite[Proposition 1.2]{DLS11a}), there exist $W^{1,2}$ functions $g^{w}_{\ell} \colon \left( 0,1 \right) \to \R^N$ for $\ell = 1,\dots,Q$ such that $\Abs{\frac{d}{dr} g^{w}_{\ell}(r)} \leq \abs{D_{w} f(rw)}$ for a.e. $r \in \left( 0,1 \right)$. Now, fix $t \in \left( a, 1 \right)$. Then, by one-dimensional calculus, we have for $s_0 < s \leq t$ and for every $\ell \in \{ 1,\dots,Q\}$ that
\begin{align*}
\abs{g^{w}_{\ell}(s) - g^{w}_{\ell}(t)}^2 &\le \left(\int_s^t r^{-d-1} \mathrm{d}r \right) \left( \int_{s}^t \left| \frac{d}{dr} g^{w}_{\ell}(r) \right|^2 \, r^{d+1}\mathrm{d}r \right)\\
&\le \frac{s^{-d}}{d}\int_{s_0}^1 \left| D f(rw) \cdot w \right|^2 \, r^{d+1}\mathrm{d}r \\
&= \frac{s^{-d}}{d} \int_{s_0}^{1} \left| D f(rw) \cdot rw \right|^{2} \, r^{d-1} \mathrm{d}r.
\end{align*}
For $t\leq s \leq 1$ the same computation holds true interchanging $t$ and $s$: in this case, we estimate $\frac{t^{-d}}{d} \le \frac{a^{-d}}{d} s^{-d}$. Hence in both cases we have 
\[ \abs{g^{w}_{\ell}(s) - g^{w}_{\ell}(t)}^2 \le C s^{-d} \int_{s_0}^1 \abs{D f(rw) \cdot rw}^2 \, r^{d-1}\mathrm{d}r. \]
Summing over $\ell \in \{1,\dots,Q\}$ and recalling the definition of the metric $\G$ this produces
\[
\G(f(sw), f(tw))^{2} \leq C s^{-d} \int_{s_0}^{1} \left| D f(rw) \cdot rw \right|^{2} \, r^{d-1} \mathrm{d}r \quad \mbox{for every } s \in \left(s_0, 1 \right),
\] 
where $C = C(d,a,Q)$. Multiply by $s^{d-1}$ and integrate in $s$ between $s_0$ and $1$ to obtain
\[ \int_{s_0}^1 \G(f(sw) , f(tw))^2 s^{d-1} \mathrm{d}s \le C \abs{\ln({s_0})} 
 \int_{s_0}^1 \abs{D f(rw) \cdot rw}^2 \, r^{d-1}\mathrm{d}r.\]
Integrating now in $w \in \partial B_1$ gives inequality \eqref{eq:0-homogeneous comparison}.\\ 

We are now ready to prove the proposition. Let $u \in W^{1,2}(\Omega, \A_{Q}(\N))$ be a stationary $Q$-harmonic map, and assume that $B_{1}$ is $(k,\e)$-symmetric for $u$. 
By $(b)$ in Definition \ref{def:quant_sym}, we can fix $\frac{1}{4} \le t \le \frac{1}{\sqrt{2}}$ such that 
$x\in \partial B_t \mapsto u(x)$ is in $W^{1,2}(\partial B_t, \A_{Q}(\N))$ and satisfies $\int_{\partial B_t} \abs{ D_L u}^2 \le C \int_{B_1} \abs{D_Lu}^2$.\\
For a.e. $y' \in B^k_t$ we have that the map $z \mapsto v_{y'}(z):=u(y',z)$ is in $W^{1,2}(B_{t}^{m-k},\A_{Q}(\N))$. Hence, by the scaled version of \eqref{eq:0-homogeneous comparison} with $d = m-k$ we have for any $0<s_0<
\frac14$ that \[ \int_{B^{z}_t\setminus B^z_{s_0}} \G\left( v_{y'}(z) , v_{y'}\left(t \frac{z}{\abs{z}}\right)\right)^2 \le C\abs{\ln(s_0)} \int_{B^{z}_t} \left| D u(y',z) \cdot z \right|^2 \, ,\] where $C = C(m,Q)$. Integrating this now in $y' \in B^k_t$ we obtain  
\[ \int_{B^{y'}_t} \int_{B^{z}_t\setminus B^{z}_{s_0}} \G\left( v_{y'}(z) , v_{y'}\left(t \frac{z}{\abs{z}}\right)\right)^2 \le C \abs{\ln s_0} \int_{B^k_t \times B^{m-k}_t} \left|D u(y,z) \cdot z \right|^2 \, \mathrm{d}y \mathrm{d}z .\]
Adding the scaled version of \eqref{eq:halfpoincare}, since $B^k_t\times B^{m-k}_t \subset B_1$ we obtain 
\begin{align*}
	\int_{B^{y'}_t} &\left( \int_{B_t^y\times B_t^z} \G( u(y,z) , v_{y'}(z))^2 + \int_{B^{z}_t\setminus B^z_{s_0}} \G\left( v_{y'}(z) , v_{y'}\left(t \frac{z}{\abs{z}}\right)\right)^2 \right) \\
	&\qquad \qquad \qquad \qquad \qquad \qquad \le C \left(\int_{B_1} \abs{D_L u}^2  + \abs{\ln (s_0)} \int_{B_1} | D u(x) \cdot x |^2\right).
\end{align*}
Hence there exists $y'_0 \in B^k_t$ such that 
\begin{align*}
\int_{B_t^y\times B_t^z} \G( u(y,z) , v_{y_0'}(z))^2 &+ \int_{B^{z}_t\setminus B^z_{s_0}} \G\left( v_{y_0'}(z) , v_{y_0'}\left(t \frac{z}{\abs{z}}\right)\right)^2 \\
& \le \frac{C}{\Ha^{k}(B^k_t)} \left(\int_{B_1} \abs{D_L u}^2  + \abs{\ln(s_0)} \int_{B_1} \left| D u(x) \cdot x \right|^2\right) \\
&\leq C(1 + \abs{\ln(s_0)}) \e,
\end{align*}
where in the last inequality we have used that the ball $B_{1}$ is, by assumption, $(k,\e)$-symmetric for $u$ together with \eqref{eq:monotonicity identity-integrated lower bound}.

Set $h(x) = h(y,z):=v_{y_0'}\left(t \frac{z}{\abs{z}}\right) \in W^{1,2}(B_{t}, \A_{Q}(\N))$. Note that, by definition, $h$ is homogeneous with respect to $0$. Furthermore, $h$ is $k$-symmetric. An application of the triangle inequality gives
\begin{align*} \int_{B_t^y \times B_t^z} \G( u(y,z), h(x))^2 \le & 2 \int_{B_t^y \times B_t^z} \G( u(y,z), v_{y_0'}(z))^2 + 2 \int_{B_t^y \times (B_t^z\setminus B^z_{s_0}) } \G( v_{y_0'}(z), h(x))^2 \\ &+ 2 \int_{B_t^y \times B_{s_0}^z } \G( v_{y_0'}(z), h(x))^2.\end{align*}
As we have shown above, the first two integrals can be bounded by $C(1+\abs{ \ln(s_0)}) \e$. As for the last integral, we estimate it brutally by fixing a point $p \in \N$ and computing
\[ \int_{B_t^y \times B_{s_0}^z } \G( v_{y_0'}(z), h(x))^2 \le 2 \sup_{x \in B_1} \G(u(x), Q\llbracket p \rrbracket)^{2} \Ha^{k}(B_1^k) \Ha^{m-k}(B_{s_0}^{m-k}) \leq C Q {\rm diam}(\N)^{2} s_{0}^{m-k} \]
Hence choosing $s_0 = \e$ proves the proposition, since we get \[ \int_{B_t} \G( u(x) , h(x) )^{2} \le C \abs{\e \ln(\e)}. \]
\end{proof}

\begin{corollary} \label{stationary_old_new_strata}
Let $u \in W^{1,2}_{loc}(\Omega, \A_{Q}(\N))$ be a stationary $Q$-harmonic map. Then
\[
\cS^{k}_{{\rm [CN]}}(u) \subset \cS^{k}(u).
\]
\end{corollary}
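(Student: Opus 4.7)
The natural approach is by contrapositive, using Proposition \ref{prop:comparision_to_old_stata} as a black box: the plan is to show that if $x \notin \cS^{k}(u)$, then $x \notin \cS^{k}_{{\rm [CN]}}(u)$. Unfolding Definition \ref{def:stratification}, the hypothesis $x \notin \cS^{k}(u)$ will mean that for every $\e > 0$ there exists a radius $s = s(\e) \in (0,1)$, with $B_{2s}(x) \subset \Omega$, such that the ball $B_{s}(x)$ is $(k+1, \e)$-symmetric for $u$ in the sense of Definition \ref{def:quant_sym}.

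I would then invoke Proposition \ref{prop:comparision_to_old_stata} with symmetry index $k+1$ to obtain a constant $C = C(m, \N, N, Q)$ such that $B_{s/4}(x)$ is ${\rm [CN]}$ $(k+1, C|\e \ln \e|)$-symmetric. Given an arbitrary $\delta > 0$, I choose $\e = \e(\delta) > 0$ small enough that $C|\e \ln \e| < \delta$; this is possible because $\lim_{\e \downarrow 0} C|\e \ln \e| = 0$. The ball $B_{s(\e)/4}(x)$, whose radius lies in $(0, 1/4)$, is therefore ${\rm [CN]}$ $(k+1, \delta)$-symmetric. Since $\delta$ was arbitrary, this shows $x \notin \cS^{k}_{{\rm [CN]}}(u)$, as desired.

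No real obstacle arises: the substantive work has already been done in Proposition \ref{prop:comparision_to_old_stata}, which depends only on the monotonicity identity \eqref{eq:monotonicity identity-integrated lower bound} and on the quantitative reconstruction of an approximating $(k+1)$-symmetric model. The mild logarithmic modulus $C|\e \ln \e|$ is harmless for the comparison of strata. I note that the reverse inclusion does \emph{not} follow from the arguments developed so far in the stationary setting; indeed, the equivalence $\cS^{k}_{{\rm [CN]}} = \cS^{k}$ was established only for minimizers via the preceding corollary, where compactness of the class of energy minimizers and convergence of the energy were crucially used.
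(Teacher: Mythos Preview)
Your proof is correct and is precisely the argument the paper has in mind: the corollary is stated without proof immediately after Proposition \ref{prop:comparision_to_old_stata}, and your contrapositive unfolding of the definitions together with the observation that $C|\e\ln\e|\to 0$ is exactly how one derives it. Your closing remark on why the reverse inclusion is not available in the stationary case is also accurate and consonant with the paper.
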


We conclude the section with two propositions about the characterization of the singular set for minimizing and stationary maps. The first one is the following effective version of Corollary \ref{prop: empty n-2 strata}.

\begin{proposition} \label{prop:empty_n-2_eps_strata}
There exists $\e = \e(m,\N,N,Q,\Lambda)$ such that for any map $u$ satisfying Assumption \ref{ass:harmonic} the following holds:
\[
B_{1} \cap \left(\cS^{m-1}(u) \setminus \cS^{m-3}_{\e}(u)\right) = \emptyset.
\] 
\end{proposition}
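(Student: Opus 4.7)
The plan is to argue by contradiction and compactness, building a blow-up whose singular set contains an $(m-2)$-dimensional plane and thereby violates the classical bound $\Ha^{m-2}(\sing_H) = 0$ for minimizing $Q$-valued maps, established via Federer--Almgren reduction at the end of Section~1.

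If the statement fails, I can extract sequences $\e_j \downarrow 0$, minimizers $u_j$ satisfying Assumption~\ref{ass:harmonic}, singular points $x_j \in B_1 \cap \cS^{m-1}(u_j) = B_1 \cap \sing_H(u_j)$, radii $s_j \in (0,1)$, and $(m-2)$-dimensional linear subspaces $L_j \subset \R^m$ such that $B_{s_j}(x_j)$ is $(m-2, \e_j)$-symmetric for $u_j$ with invariance plane $L_j$. After composing each $u_j$ with a rotation of $\R^m$, I fix $L_j \equiv L$ once and for all. I then rescale by setting $w_j(y) := u_j(x_j + s_j y)$; these are energy minimizers on $B_{9/s_j}(0) \supset B_9(0)$, $B_1(0)$ is $(m-2,\e_j)$-symmetric with respect to $w_j$ along $L$, and the rescaled energies $\E(w_j, B_R(0)) = \E(u_j, B_{Rs_j}(x_j))$ are uniformly controlled on each fixed $B_R(0)$ by monotonicity together with the a priori bound $\E(u_j,B_{10}(0)) \leq \Lambda$.

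By Theorem~\ref{HM_Q_compactness}, applied on $B_R(0)$ for each $R$ and diagonalized if $s_j \downarrow 0$, a subsequence converges in $L^{2}_{loc}$ and in energy to a $Q$-valued energy minimizer $w_\infty$. Passing both conditions defining $(m-2,\e_j)$-symmetry to the limit: the pinch $\theta_{w_j}(0,2) - \theta_{w_j}(0,1) < \e_j$ together with convergence of the (smoothly weighted) mollified energy yields $\theta_{w_\infty}(0,2) = \theta_{w_\infty}(0,1)$, which by \eqref{eq:monotonicity identity-integrated lower bound} forces $D_{r_0}w_\infty \equiv Q\a{0}$ on $B_1(0)$, i.e.\ $w_\infty$ is homogeneous with respect to the origin on $B_1$; lower semicontinuity of the partial Dirichlet energies applied to $\int_{B_1}|D_L w_j|^2 \leq \e_j$ gives $D_L w_\infty \equiv Q\a{0}$ a.e.\ on $B_1$, i.e.\ $w_\infty$ is invariant under translations along $L$ on $B_1$. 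Separately, the $\e$-regularity Theorem~\ref{Q_eps_reg} at the singular points $x_j$ gives the scale-invariant lower bound $\Theta_{u_j}(x_j) \geq \e_0$, hence $\Theta_{w_j}(0) \geq \e_0$; monotonicity plus energy convergence then yield $\Theta_{w_\infty}(0) \geq \e_0 > 0$.

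To close the contradiction, for every $y_0 \in L \cap B_1$ the $L$-invariance of $w_\infty$ on $B_1$ produces a small ball around $y_0$ on which $w_\infty(y_0 + \cdot)$ coincides with $w_\infty(\cdot)$ on a small ball around $0$. Hence $\Theta_{w_\infty}(y_0) = \Theta_{w_\infty}(0) \geq \e_0 > 0$, so $y_0 \in \sing_H(w_\infty)$. Thus $L \cap B_1 \subset \sing_H(w_\infty)$ and $\Ha^{m-2}(\sing_H(w_\infty)) > 0$, contradicting the dimension bound for minimizers. The main technical point to verify carefully is the simultaneous passage of the two $(m-2, \e_j)$-symmetry conditions and of the density lower bound at the origin through the limit; each follows from the energy convergence in Theorem~\ref{HM_Q_compactness}, the lower semicontinuity of the partial energies, and the monotonicity identity \eqref{eq:monotonicity identity-integrated lower bound}, but it is the only place in the argument where some (brief) care is required.
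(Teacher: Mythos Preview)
Your argument is correct and follows essentially the same route as the paper: contradict, rescale, pass to a compactness limit that is $(m-2)$-symmetric on $B_1$, and reach a contradiction with $\Ha^{m-2}(\sing_H)=0$. The only cosmetic difference is the endgame: the paper observes that an $(m-2)$-symmetric minimizer must have $0\in\reg_H$ and then feeds this back to the sequence $v_j$ via energy convergence to contradict $x_j\in\cS^{m-1}(u_j)$, whereas you keep the density lower bound $\Theta_{w_\infty}(0)\geq\e_0$ through the limit and conclude $L\cap B_1\subset\sing_H(w_\infty)$ directly; these are two phrasings of the same mechanism.
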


\begin{proof}
The proof is by contradiction. Assume, therefore, that for every $j \in \mathbb{N}$ there exists $u_{j}$ as in Assumption \ref{ass:harmonic} with a point $x_{j} \in B_{1} \cap \left(\cS^{m-1}(u_j) \setminus \cS^{m-3}_{j^{-1}}(u_j) \right)$. Since $x_{j} \notin \cS^{m-3}_{j^{-1}}(u_j)$, there exists $0 < r_j < 1$ and a linear subspace $L_j \subset \R^m$ with $\dim(L_j) = m-2$ such that
\begin{align} \label{eps_strata_pinch}
\theta_{u_j}(x_j, 2r_j) - \theta_{u_j}(x_j, r_j) &\leq j^{-1}\, , \\ \label{eps_strata_symm}
r_{j}^{2-m} \int_{B_{r_j}(x_j)} \abs{D_{L_j}u_j}^{2} &\leq j^{-1}\, . 
\end{align}
 
As usual, without loss of generality we assume that the $(m-2)$-planes of $j^{-1}$-almost symmetry are a fixed subspace $L$ along the sequence. Set $v_{j}(y) := u_{j}(x_j + r_j y)$, and re-write the equations \eqref{eps_strata_pinch} and \eqref{eps_strata_symm} in terms of $v_j$:
\begin{align} \label{eps_strata_pinch'}
\theta_{v_j}(0,2) - \theta_{v_j}(0,1) &\leq j^{-1} \, , \\ \label{eps_strata_symm'}
\int_{B_{1}} \abs{D_L v_{j}}^{2} &\leq j^{-1} \, .
\end{align}

Now, by an elementary computation it is immediate to see that for every $\rho \in \left( 0, 8 \right)$ one has
\[
\rho^{2-m} \int_{B_{\rho}} \abs{D v_j}^{2} = (\rho r_{j})^{2-m} \int_{B_{\rho r_j}(x_j)} \abs{D u_j}^{2} \leq C_{m} \Lambda.
\]
Hence, by the Compactness Theorem \ref{HM_Q_compactness}, the sequence $\{v_{j}\}_{j \in \mathbb{N}}$ converges up to subsequences in $L^2(B_{8}, \A_{Q}(\R^N))$ and in energy to a $Q$-valued energy minimizing map $v$ for which
\begin{align} \label{eps_strata_pinch_limit}
\theta_{v}(0,2) - \theta_{v}(0,1) &= 0 \, , \\ \label{eps_strata_symm_limit}
\int_{B_1} \abs{D_L v}^{2} &= 0 \, .
\end{align}
In particular, $\left. v \right|_{B_1}$ can be extended to an $(m-2)$-symmetric energy minimizer. This implies that a fortiori $0 \in {\rm reg}_{H}(v)$. Thus, $0 \notin \cS^{m-1}(v_j)$ for $j$ large, which contradicts the fact that $x_{j} \in \cS^{m-1}(u_j)$.
\end{proof}

In the single-valued case $Q = 1$, we have the following result on the quantitative stratification for stationary harmonic maps.

\begin{proposition}\label{prop.empty n-1 strata}
There exists $\e = \e(m, \N)$ such that for any \emph{single-valued} stationary harmonic map $u \in W^{1,2}_{loc}(\Omega, \N)$ the following holds:
\[ \cS^{m-1}(u) \setminus \cS_\e^{m-2}(u) = \emptyset.\]
\end{proposition}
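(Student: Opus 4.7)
My plan is to prove the proposition by contradiction, closely mirroring the scheme of Proposition \ref{prop:empty_n-2_eps_strata} but exploiting a rigidity specific to the single-valued setting: any $(m-1)$-symmetric map $h \in W^{1,2}_{loc}(\R^m, \N)$ is a constant. Indeed, $0$-homogeneity together with invariance along an $(m-1)$-plane $L$ forces $h$ to depend only on the sign of the coordinate $t$ perpendicular to $L$, so $h$ takes two values $p_{\pm} \in \N$ on the open half-spaces $\{\pm t > 0\}$; a jump across the hyperplane $L$ is incompatible with membership in $W^{1,2}_{loc}$, whence $p_+ = p_-$ and $h \equiv p_+$.

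Assume the conclusion fails. Then one can pick sequences $(u_j)$ of stationary harmonic maps and points $x_j \in \cS^{m-1}(u_j)$ (equivalently, $\Theta_{u_j}(x_j) > 0$) such that for some $s_j \in (0,1)$ the ball $B_{s_j}(x_j)$ is $(m-1, j^{-1})$-symmetric for $u_j$. By translation, dilation, and rotation I reduce to $x_j = 0$, $s_j = 1$, and a fixed invariance plane $L$. Applying Proposition \ref{prop:comparision_to_old_stata} one gets that $B_{1/4}(0)$ is Cheeger--Naber $(m-1, \delta_j)$-symmetric with $\delta_j = C|j^{-1}\ln(j^{-1})| \to 0$, so there exist $(m-1)$-symmetric maps $h_j$ satisfying $\dashint_{B_{1/4}(0)} |u_j - h_j|^2 \le \delta_j$. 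By the rigidity just observed each $h_j$ must be a constant $p_j \in \N$; compactness of $\N$ permits a subsequence along which $p_j \to p$, and hence $u_j \to p$ strongly in $L^2(B_{1/4}(0))$.

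To conclude, I will upgrade this $L^2$-convergence to energy convergence on a slightly smaller ball. Testing the outer variation formula \eqref{outer_variation_formula} with $Y(y, q) = \eta(y)^2 (q - p)$, where $\eta \in C^1_c(B_{1/4}(0))$ with $\eta \equiv 1$ on $B_{1/8}(0)$, and absorbing via Young's inequality, one obtains a Caccioppoli-type estimate
\[
\int_{B_{1/8}(0)} |Du_j|^2 \;\leq\; C(\N) \int_{B_{1/4}(0)} |u_j - p|^2,
\]
valid as long as $\|u_j - p\|_{L^\infty(\spt \eta)}$ is small enough to absorb the second fundamental form term $\int \eta^2 \langle A_{u_j}(Du_j, Du_j), u_j - p \rangle$; the required pointwise closeness is extracted from $L^2$-closeness by a pigeonhole argument combined with Bethuel's $\e$-regularity applied on the open regular set of $u_j$. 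It follows that $\theta_{u_j}(0, 1/8) \to 0$, so Bethuel's $\e$-regularity theorem for stationary harmonic maps applies and shows that $u_j$ is smooth at $0$ for $j$ large, contradicting $\Theta_{u_j}(0) > 0$.

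The main obstacle is precisely this last step: for \emph{minimizing} maps the analogous upgrade from $L^2$- to energy-closeness is immediate from the strong compactness of Theorem \ref{HM_Q_compactness}, whereas for merely stationary maps a defect measure could in principle obstruct it. What saves the argument here is exactly the rigidity from the first paragraph---the model map is forced to be a \emph{constant}---which is precisely what permits a direct Caccioppoli manipulation of the outer variation formula with a linear test vector field, thus circumventing the general theory of weak limits of stationary harmonic maps.
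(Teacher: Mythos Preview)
Your rigidity observation is correct and useful: a single-valued $(m-1)$-symmetric map in $W^{1,2}_{loc}(\R^m,\N)$ is indeed constant, and combining this with Proposition \ref{prop:comparision_to_old_stata} does give $u_j\to p$ in $L^2(B_{1/4})$ without any a priori energy bound. The gap is in the last step, where you try to upgrade $L^2$-closeness to energy smallness.

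When you plug $Y(y,q)=\eta(y)^2(q-p)$ into \eqref{outer_variation_formula} you obtain
\[
\int \eta^2|Du_j|^2 \;=\; -2\int \eta\,\langle Du_j\cdot D\eta,\,u_j-p\rangle \;-\;\int \eta^2\,\langle A_{u_j}(Du_j,Du_j),\,u_j-p\rangle.
\]
The first term on the right is handled by Young's inequality, but the second is bounded only by $\|A\|_\infty\,\|u_j-p\|_{L^\infty(\spt\eta)}\int\eta^2|Du_j|^2$, and absorbing it requires $\|u_j-p\|_{L^\infty}$ small. You only have $L^2$ smallness, and your proposed fix---``pigeonhole combined with Bethuel's $\e$-regularity on the regular set''---is circular: Bethuel's theorem needs \emph{energy} smallness on a ball as input, which is exactly what you are trying to prove, and smoothness on the (open, dense) regular set of $u_j$ carries no uniform $L^\infty$ information. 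Since the statement demands $\e=\e(m,\N)$ with no dependence on an energy bound $\Lambda$, the $u_j$ may have arbitrarily large energy on $B_1$, so no compactness is available either.

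The paper avoids this obstacle entirely by working with the \emph{inner} variation formula rather than the outer one. Testing \eqref{eq:inner_variation_formula} with $X(y,z)=\psi(|y|)\varphi(|z|)\,z$ (here $z$ is the single $L^\perp$-coordinate) and using that $m-k-2=-1$ when $k=m-1$, one obtains directly
\[
\int_{B_{1/2}}|Du|^2 \;\le\; C\int_{B_1}\bigl(|x|\,|D_{r}u|^2+|D_Lu|^2\bigr)\;\le\;C\e,
\]
with $C=C(m)$, after which the $\e$-regularity theorem applies. No contradiction sequence, no compactness, and no $L^\infty$ control are needed.
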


\begin{proof}
Proposition \ref{prop.empty n-1 strata} is a consequence of the inner variation formula.
First we derive a general estimate and show afterwards how it implies the proposition.

Let us consider a single-valued harmonic map $u$ in $B_1$ that satisfies the inner variation formula \eqref{eq:inner_variation_formula} with $Q = 1$. We fix two non-negative, non-increasing functions $\psi, \varphi \in C^1_c\left(\left[ 0, \frac{1}{\sqrt{2}}\right)\right)$ and a $k$-dimensional subspace $L \subset \R^{m}$. After a rotation, we may assume that $L=\{ x_i=0 \colon i = k+1, \dotsc, m\}$. To make the notation a bit simpler we will write $x = (y,z) \in L \times L^\perp$, and by a slight abuse of notation we shall again consider $z = (0,z)$ as a vector in $\R^{m}$.  
Consider the vector field $X(y,z):= \psi(\abs{y})\varphi(\abs{z}) z=\psi \varphi z$. We have $D X = \psi\varphi P^\perp +  \frac{\psi \varphi'}{\abs{z}} z\otimes z + \frac{\psi' \varphi}{\abs{y}} z\otimes y$, where $P^\perp$ denotes the orthogonal projection onto $L^\perp$.  We use this vector field in the inner variation formula \eqref{eq:inner_variation_formula} and obtain
\begin{align*}
0 = \int \Abs{D u}^{2} \left( (m-k) \psi \varphi + \psi \varphi' \abs{z} \right) - 2 \left( \psi \varphi \abs{D_{L^\perp} u}^2 + \psi \varphi' \frac{1}{\abs{z}}\abs{D u \cdot z}^2 + \psi' \varphi \left\langle D u \cdot z, D u \cdot \frac{y}{\abs{y}}\right\rangle \right).
\end{align*}
Observe that
\[(m-k) \psi \varphi \abs{D u}^2 - 2\psi \varphi \abs{D_{L^\perp} u}^2= (m-k-2) \varphi \psi \abs{D u}^2 + 2 \psi \varphi \abs{D_{L} u}^2.\]
Furthermore, we can write $z=x-y$ to estimate
\begin{align*}
\abs{D u \cdot z}^2&\le 2 \abs{D u \cdot x}^2 + 2 \abs{D u \cdot y}^2\\
\left\langle D u \cdot z, D u \cdot \frac{y}{\abs{y}}\right\rangle& \le \frac{1}{2}  \abs{D u \cdot x}^2 + \frac12 \Abs{D u \cdot \frac{y}{\abs{y}}}^2.
\end{align*}
Combining all together, and recalling that $\varphi', \psi' \leq 0$, we obtain the inequality 
\begin{equation}\label{eq:innnervariation_split} \begin{split} &\int - \left((m-k-2) \psi \varphi + \psi \varphi' \abs{z}\right) \abs{D u}^2 \\\le &\int 2\psi \varphi \Abs{D_{L}u}^{2} + \int \frac{-4\psi \varphi'}{\abs{z}} \left(\abs{D u \cdot x}^2 +  \abs{D u \cdot y}^2\right)  - \psi'\varphi \left( \abs{D u \cdot x}^2 + \Abs{D u \cdot \frac{y}{\abs{y}}}^2 \right). \end{split}
\end{equation}

We are ready to prove the proposition. Fix $\e > 0$ to be determined later, and suppose by contradiction that there is a point $x \in \cS^{m-1}(u) \setminus \cS_\e^{m-2}(u)$. Since $x \notin \cS_\e^{m-2}$, there exists $r=r(\e) >0 $ and an $(m-1)$-dimensional subspace $L=L(\e)$ such that $r^{2-m}\int_{B_r(x)} \abs{ D_L u}^2 < \e$ and $\theta(x,2r)-\theta(x,r) < \e$. By translation and scaling, i.e. passing to $T^u_{x,r}$, we may assume that $x=0$ and $r=1$. However, for notational convenience, we will still write $u$ for $T^u_{x,r}$.
After a further rotation we may assume that $L=\cur{x_m=0}$. Now, we have $B_{\frac{1}{2}} \subset B_{\frac{1}{\sqrt{2}}}^{m-1} \times \left(-\frac{1}{\sqrt{2}},\frac{1}{\sqrt{2}} \right) \subset B_1$. Fix a function $\eta \in C^1$ with $\eta' \le0$ and  $\eta(t) =1$ for $t \le \frac12$, $\eta(t) =0$ for $t \ge \frac{1}{\sqrt{2}}$. Set $\varphi = \psi := \eta$ in \eqref{eq:innnervariation_split}. Recall that in our situation $k=m-1$, and thus $-(m-k-2) = 1$. Furthermore, we have $\psi \varphi \ge \mathbf{1}_{B_{\frac12}}$, and $\frac{\abs{4 \psi \varphi'}}{\abs{z}}, \abs{\psi'\varphi}$ are bounded and supported in $B_1$. Hence \eqref{eq:innnervariation_split} reads in our case
\[ \int_{B_{\frac12}}  \abs{D u}^2 \le C \int_{B_1} \left( \abs{x} \abs{D_{r}u}^2 + \abs{D_L u}^2 \right), \]
where $r(x) = r_{0}(x) = \frac{x}{\abs{x}}$. By \eqref{eq:monotonicity identity-integrated lower bound}, we deduce that $\int_{B_{\frac12}}  \abs{D u}^2 \le C \e$. If $\e>0$ is chosen sufficient small, i.e. $C\e  < \e_0$ where $\e_{0} = \e_{0}(m,\N)$ is the threshold in the $\e$-regularity theorem for stationary harmonic maps (cf. \cite{Bethuel,RS}), this allows to infer that $u$ is H\"older continuous in $B_{\frac14}$, and hence $0$ is a regular point. This contradicts the assumption that $0 \in \cS^{m-1}$.
\end{proof}

\begin{remark}
Note that the above proposition could be extended (with exactly the same proof) to the case of stationary $Q$-harmonic maps \emph{if} an $\e$-regularity theorem was available in that case.
\end{remark}

\subsection{Main theorem on the quantitative strata}
Since the relevant terminology has been introduced now, we can finally state the main estimates that we are going to prove on the singular strata. 
\begin{theorem}\label{th_main}
 Given a Dirichlet-minimizing $Q$-valued map $u:\B 2 0 \subseteq \R^m \to \A_{Q}(\N)$ with $\E(u, B_{2}(0)) \leq \Lambda$, let $\cS^k_{\epsilon,r}(u)$ be its quantitative singular strata. Then, if $B_{r}\left( \cS^{k}_{\e,r}(u) \right) := \bigcup_{x \in \cS^{k}_{\e,r}(u)} B_{r}(x)$, we have
 \begin{gather} \label{eq:mink_bounds}
  \Vol\ton{\B r {\cS^k_{\epsilon,r}(u)\cap \B 1 0}}\leq C(m,\N,Q,\Lambda,\epsilon) r^{m-k}\, .
 \end{gather}
 Moreover, $\cS^k_\epsilon(u)$ is $k$-rectifiable for all $\epsilon\geq 0$.
\end{theorem}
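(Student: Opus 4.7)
The plan is to follow the quantitative stratification strategy of Naber--Valtorta \cite{naber-valtorta:harmonic}, adapted to the $Q$-valued framework and the modified notion of $(k,\e)$-symmetry from Definition \ref{def:quant_sym}. The backbone consists of three ingredients: (i) a quantitative cone-splitting lemma that upgrades pointwise approximate homogeneity at many independent points to $(k,\e)$-symmetry of a ball, (ii) a quantitative differentiation principle based on the monotonicity of the mollified energy $\theta_u(x,\cdot)$, and (iii) a rectifiable Reifenberg-type theorem for measures, which we quote from \cite{naber-valtorta:harmonic}.

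First I would prove a cone-splitting lemma: for every $\e>0$ and $\rho>0$ there is $\delta=\delta(m,\N,Q,\Lambda,\e,\rho)>0$ such that, if $B_{2r}(x)$ contains points $y_0,\dots,y_k$ which $\rho$-effectively span a $k$-dimensional affine subspace $x_0+L$ and satisfy $\theta_u(y_i,2r)-\theta_u(y_i,r/2)<\delta$, then $B_{r}(x)$ is $(k,\e)$-symmetric for $u$. The proof is by contradiction and compactness (Theorem \ref{HM_Q_compactness}): a sequence $u_j$ violating the claim would converge to a minimizer $v$ with vanishing radial derivative at all $y_i$, hence homogeneous with respect to each such point by \eqref{eq:monotonicity identity-integrated lower bound}. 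An elementary linear algebra argument then forces $v$ to be invariant along $L$, giving $k$-symmetry in the limit and contradicting the quantitative assumption through convergence in energy.

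Next, I would set up the covering construction. Fix $r<1$ and consider a maximal $r$-separated set in $\cS^k_{\e,r}(u)\cap B_1(0)$; the Minkowski estimate \eqref{eq:mink_bounds} reduces to showing that the cardinality of this set is at most $C r^{-k}$. For the induction we build coverings at dyadic scales $r=2^{-j}$ starting from $j=0$. At a generic scale, we assign to each center $x$ a weight $\mu(\{x\})=r^k$ and partition balls into \textbf{good} balls, where the energy drop $\theta_u(x,2s)-\theta_u(x,s/2)$ is small on an entire range of scales, and \textbf{bad} balls, where a definite energy drop occurs. Bad balls are controlled by quantitative differentiation: the total monotone energy $\theta_u(x,\cdot)$ is bounded by $C\Lambda$, so the number of scales at which a drop by $\delta$ occurs is at most $C\Lambda/\delta$. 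For good balls, cone-splitting forces the centers to lie in a thin neighborhood of a $k$-plane $L$: if they did not, one could extract a $(k+1)$-dimensional $\rho$-independent configuration of near-homogeneity points, giving $(k+1,\e)$-symmetry of a suitable ball and contradicting the definition of $\cS^k_{\e,r}$. The quantitative planarity of good balls is expressed via bounds on the mean-flatness $\beta$-numbers of the counting measure $\mu$, with the $\beta$-numbers controlled by the energy drop through the differential identity \eqref{eq:monotonicity identity-differnetial form} (the gradient decomposition bounds the second moment of $\mu$ away from planes by the radial derivative integral of $u$).

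With the packing/flatness data in hand, I would invoke the discrete rectifiable Reifenberg theorem of Naber--Valtorta: summing the $\beta$-bounds along the tree of descendants yields $\sum_{\text{balls}} r_i^k\leq C$, which gives the Minkowski estimate \eqref{eq:mink_bounds} after standard covering considerations. Passing to the limit $r\downarrow 0$ inside the Reifenberg construction produces a $k$-rectifiable set containing $\cS^k_\e(u)\cap B_1(0)$ with finite $\mathcal{H}^k$-measure, hence rectifiability of $\cS^k_\e$ for every $\e>0$. The case $\e=0$ follows by writing $\cS^k(u)=\bigcup_{\e>0}\cS^k_\e(u)$ as a countable union of rectifiable sets. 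The main technical obstacle is the cone-splitting / $\beta$-number bound: one has to show that the $L^2$ radial derivative integrated against the counting measure $\mu$ controls the second moment of $\mu$ with respect to its best-approximating $k$-plane, which requires combining the monotonicity identity \eqref{eq:monotonicity identity-integrated} with the quantitative symmetry definition, exactly in the spirit of \cite[Theorem 7.1]{naber-valtorta:harmonic} but now using condition $(b)$ of Definition \ref{def:quant_sym} in place of the $L^2$-closeness to a model map. The compactness/limit arguments carry over without essential change thanks to Theorem \ref{HM_Q_compactness} and Proposition \ref{prop:charac_strata}.
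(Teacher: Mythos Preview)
Your proposal is correct and follows essentially the same Naber--Valtorta strategy as the paper: cone-splitting (the paper obtains it by the direct quantitative estimate of Lemma \ref{lem:pinching_control1} and Theorem \ref{th:quantitative_eps_reg} rather than compactness), the $\beta$-number bound via the radial energy drop (Theorem \ref{th_best_pinch}/Corollary \ref{cor_best_pinch}), and the discrete Reifenberg theorem inside an inductive good/bad-ball covering (Propositions \ref{prop_cover} and \ref{prop_cover_sec}). The only notable deviation is in the rectifiability step: the paper first uses the Minkowski bounds to get Ahlfors upper regularity of $\mathcal H^k\mres\cS^k_\e$ and then applies Azzam--Tolsa (Theorem \ref{th_azzamtolsa}), whereas you propose to extract rectifiability directly from the rectifiable-Reifenberg construction---both routes are valid.
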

\begin{remark}
 This theorem is similar in spirit to \cite[Theorem 1.3]{naber-valtorta:harmonic}.
\end{remark}

Note that this and the $\epsilon$-regularity theorem immediately imply Theorem \ref{th_appetizer} as a corollary. Indeed, we have
\begin{corollary}[{Theorem \ref{th_appetizer}}]
 Given a Dirichlet-minimizing $Q$-valued map $u:\B 2 0 \subseteq \R^m \to \A_{Q}(\N)$ with energy bounded by $\Lambda$, if $B_{r}\left( \sing_{H}(u) \right) := \bigcup_{x \in \sing_{H}(u)} B_{r}(x)$ then we have
 \begin{gather}
  \Vol\ton{\B r {\sing_{H}(u)\cap \B 1 0}}\leq C r^{3}\, ,
 \end{gather}
 where $C = C(m,\N,Q,\Lambda)$. Moreover, $\sing_{H}(u)$ is $(m-3)$-rectifiable.
\end{corollary}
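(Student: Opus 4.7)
The plan is to derive the corollary as an essentially immediate consequence of Theorem~\ref{th_main}, Remark~\ref{rem_sing}, and Proposition~\ref{prop:empty_n-2_eps_strata}, without any genuinely new argument. The only real content is the bookkeeping needed to pass from the quantitative stratum $\cS^{m-3}_{\e}$ to the honest singular set $\sing_H(u)$.

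First I would use Remark~\ref{rem_sing} to identify $\sing_H(u)=\cS^{m-1}(u)$, and then invoke Proposition~\ref{prop:empty_n-2_eps_strata} to produce a threshold $\e_0=\e_0(m,\N,N,Q,\Lambda)>0$ such that
\[
\sing_H(u)\cap \B 1 0 \;=\; \cS^{m-1}(u)\cap \B 1 0 \;\subseteq\; \cS^{m-3}_{\e_0}(u)\;\subseteq\; \cS^{m-3}_{\e_0,r}(u)\qquad\text{for every } r>0,
\]
the last inclusion being built into Definition~\ref{def:stratification}. Plugging this into the Minkowski bound \eqref{eq:mink_bounds} of Theorem~\ref{th_main} with $k=m-3$ and $\e=\e_0$ immediately yields
\[
\Vol\!\ton{\B r {\sing_H(u)\cap \B 1 0}} \;\leq\; \Vol\!\ton{\B r {\cS^{m-3}_{\e_0,r}(u)\cap \B 1 0}} \;\leq\; C(m,\N,Q,\Lambda)\,r^{3},
\]
which is the first claim.

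For rectifiability, Theorem~\ref{th_main} guarantees that $\cS^{m-3}_{\e_0}(u)$ is $(m-3)$-rectifiable; the set $\sing_H(u)$ is closed (hence Borel) by \cite[Theorem~0.1]{Hir16}; and the inclusion $\sing_H(u)\subseteq \cS^{m-3}_{\e_0}(u)$ proved above, localized on a countable cover of $\B 2 0$ by sufficiently small balls and applied after rescaling to each of them (the monotonicity of $\theta(x,r)$ allows one to keep a uniform energy bound $\Lambda'=\Lambda'(\Lambda)$ after rescaling), gives the required inclusion on all of $\B 2 0$. Since a Borel subset of an $(m-3)$-rectifiable set is itself $(m-3)$-rectifiable, and countable unions of such sets remain $(m-3)$-rectifiable, the conclusion follows.

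I do not expect any serious obstacle at this stage: the real work is entirely concentrated in Theorem~\ref{th_main} and Proposition~\ref{prop:empty_n-2_eps_strata}. The only mild subtlety is the standard rescaling/covering argument needed to upgrade the rectifiability from inside $\B 1 0$ to all of the domain of $u$.
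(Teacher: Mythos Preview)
Your proposal is correct and follows essentially the same route as the paper: use Remark~\ref{rem_sing} and Proposition~\ref{prop:empty_n-2_eps_strata} to place $\sing_H(u)\cap B_1(0)$ inside $\cS^{m-3}_{\e_0}(u)$ (the paper actually records the equality $\cS^{m-3}_{\e_0}(u)\cap B_1(0)=\sing_H(u)\cap B_1(0)$, but only the inclusion is needed), and then apply Theorem~\ref{th_main} with $k=m-3$. Your extra covering/rescaling remark to pass from $B_1(0)$ to the full domain is a harmless elaboration the paper leaves implicit.
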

\begin{proof}
 By remark \ref{rem_sing} and proposition \ref{prop:empty_n-2_eps_strata}, there exists an $\epsilon$ such that
 \begin{gather}
  \cS^{m-3}_{\epsilon}(u)\cap \B 1 0=\sing_{H}(u)\cap \B 1 0\, .
 \end{gather}
Thus Theorem \ref{th_main} immediately proves the volume estimates and rectifiability for $\sing_H(u)$.
\end{proof}

We postpone the proof of Theorem \ref{th_main} to Section \ref{sec_cov}, after having discussed a few technical tools needed to complete it.

\section{Quantitative \texorpdfstring{$\e$}{epsilon}-regularity theorems}

In this section we are going to present the proof of a quantitative version of the $\e$-regularity theorem for $Q$-valued minimizers, cf. Theorem \ref{th:quantitative_eps_reg} below, which in turn implies Corollary \ref{corollario_finale_bello}, providing sufficient conditions under which the singular set $\sing_{H}(u)$ is constrained to live in the tubular neighborhood of an affine subspace of $\R^m$ of appropriate dimension. We start with the following definition, analogous to \cite[Definition 4.5]{naber-valtorta:harmonic}.

\begin{definition}
Let $y_{0}, y_{1}, \dots, y_{k}$ be $(k+1)$ points in $B_{1}(0) \subset \R^{m}$, and let $\rho > 0$. We say that these points $\rho$-effectively span a $k$-dimensional affine subspace if 
\begin{equation}
\dist(y_{i}, y_{0} + {\rm span}[y_{1} - y_{0}, \dots, y_{i-1} - y_{0}]) \geq 2\rho \quad \mbox{for every $i = 1,\dots,k$}.
\end{equation}  
A set $F \subset B_{1}(0)$ $\rho$-effectively spans a $k$-dimensional subspace if there exist points $\{ y_{i} \}_{i=0}^{k} \subset F$ which $\rho$-effectively span a $k$-dimensional subspace.
\end{definition}

\begin{remark} \label{rmk:eff_spanning_properties}
It is easy to see that if the points $\{ y_{i} \}_{i=0}^{k}$ $\rho$-effectively span a $k$-dimensional affine subspace then for every point 
\[
x \in y_{0} + {\rm span}[y_{1}-y_{0}, \dots, y_{k}-y_{0}]
\]
there exists a unique set of numbers $\{ \alpha_{i} \}_{i=1}^{k}$ such that
\[
x = y_{0} + \sum_{i=1}^{k} \alpha_{i} (y_{i}-y_{0}), \quad |\alpha_{i}| \leq C(m,\rho) |x-y_{0}|.
\]
Furthermore, the notion of $\rho$-effectively spanning a $k$-dimensional affine subspace passes to the limit: if for every $j \in \mathbb{N}$ the points $\{ y^{j}_{i} \}_{i=0}^{k}$ $\rho$-effectively span a $k$-dimensional subspace and there exist the limits $y_{i} := \lim_{j\to\infty} y_{i}^{j}$, then also the points $\{ y_{i} \}_{i=0}^{k}$ $\rho$-effectively span a $k$-dimensional subspace.
\end{remark}

We can now state the main theorem of this section.

\begin{theorem}\label{th:quantitative_eps_reg}
Let $\e, \rho > 0$ be fixed. There exist $\delta, \overline{r} > 0$, depending on $m,\rho,\Lambda,\e$, with the following property. Let $u \in W^{1,2}(B_{10}(0), \A_{Q}(\N))$ be a stationary $Q$-harmonic map with energy bounded by $\Lambda$, let $r \leq 1$, and let
\[
F := \lbrace y \in B_{r}(0) \, \colon \, \theta(y,4r) - \theta(y,2r) < \delta \rbrace.
\]
If $F$ $(\rho \cdot r)$-effectively spans a $k$-dimensional subspace $L$, then
\begin{equation}
\left( \cS_{\e,r \overline{r}}^{k}(u) \cap B_{\frac{r}{2}}(0) \right) \setminus B_{r \rho}(L) = \emptyset.
\end{equation}
\end{theorem}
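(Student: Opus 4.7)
The plan is to argue by contradiction and compactness, adapting the Naber--Valtorta ``quantitative cone-splitting'' scheme to the $Q$-valued setting. Suppose the statement fails: then for each $j \in \mathbb{N}$ there are $r_j \le 1$, stationary $Q$-harmonic maps $u_j$ with $\E(u_j, B_{10}(0)) \le \Lambda$, sets $F_j \subset B_{r_j}(0)$ on which $\theta_{u_j}(\cdot, 4r_j) - \theta_{u_j}(\cdot, 2r_j) < 1/j$, points $y_0^j, \dots, y_k^j \in F_j$ that $(\rho r_j)$-effectively span a $k$-subspace $L_j$, and $x_j \in \cS^k_{\e, r_j/j}(u_j) \cap B_{r_j/2}(0) \setminus B_{r_j \rho}(L_j)$. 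By the scale invariance of all hypotheses (via $T^{u_j}_{0, r_j}$) we normalize $r_j = 1$; by further rotations and translations we also fix $L_j = L$ and $y_0^j = 0$ along the sequence.

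Using a compactness result for stationary $Q$-harmonic maps with bounded energy, we extract subsequential limits $u_j \to u_\infty$ in $L^2_{\mathrm{loc}}$ and locally in energy, $x_j \to x_\infty$ with $\dist(x_\infty, L) \ge \rho$, and $y_i^j \to y_i^\infty$ with the limit points still $\rho$-effectively spanning $L$ by Remark~\ref{rmk:eff_spanning_properties}. Continuity of $\theta$ along the convergence gives $\theta_{u_\infty}(y_i^\infty, 4) = \theta_{u_\infty}(y_i^\infty, 2)$ for every $i$, so the rigidity in \eqref{eq:monotonicity identity-integrated} forces $u_\infty$ to be $0$-homogeneous about each $y_i^\infty$. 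A cone-splitting argument, exploiting that $Q$-valued $0$-homogeneity about two distinct points $p, q$ yields translation invariance along $p-q$ (which rests on $\abs{D_{\alpha r_p + \beta r_q} u} \le \abs{\alpha}\abs{D_{r_p} u} + \abs{\beta}\abs{D_{r_q} u}$), then forces $u_\infty$ to be $k$-symmetric with spine $L$.

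The geometric heart of the argument is to extract from the $k$-symmetry a $(k+1, \e)$-symmetric ball at $x_\infty$ at some positive scale. Invariance along $L$ gives $D_v u_\infty \equiv Q \llbracket 0 \rrbracket$ for every $v \in L$, while $0$-homogeneity gives $D_{w/\abs{w}} u_\infty(w) = Q \llbracket 0 \rrbracket$ a.e. For $w \in B_s(x_\infty)$ with $s \le \rho/2$, the unit vectors $w/\abs{w}$ differ from $x_\infty/\abs{x_\infty}$ by at most $C s \rho^{-1}$, so pointwise
\begin{equation*}
\Abs{D_{x_\infty/\abs{x_\infty}} u_\infty(w)} \le C s \rho^{-1} \abs{D u_\infty(w)} \quad \text{a.e. in } B_s(x_\infty),
\end{equation*}
and integrating against the standard energy bound yields
\begin{equation*}
s^{2-m} \int_{B_s(x_\infty)} \abs{D_{L'} u_\infty}^2 \le C \Lambda s^2 \rho^{-2},
\end{equation*}
where $L' := L \oplus \R x_\infty$ is $(k+1)$-dimensional. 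Monotonicity also forces $\theta_{u_\infty}(x_\infty, 2s) - \theta_{u_\infty}(x_\infty, s) \to 0$ as $s \downarrow 0$. Thus for some $s_0 \in (0, 1)$ depending on $\e, \rho, \Lambda, u_\infty$, the ball $B_{s_0}(x_\infty)$ is $(k+1, \e/2)$-symmetric for $u_\infty$.

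By the convergence $u_j \to u_\infty$ in $L^2$ and in energy, the ball $B_{s_0}(x_j)$ is then $(k+1, \e)$-symmetric for $u_j$ for all $j$ sufficiently large; but then $s_0 \in [1/j, 1)$ for $j$ large, contradicting $x_j \in \cS^k_{\e, 1/j}(u_j)$. The main technical obstacle is arranging a compactness theorem for stationary $Q$-harmonic maps strong enough to transfer both the pointwise convergence $\theta_{u_j}(y_i^j, \cdot) \to \theta_{u_\infty}(y_i^\infty, \cdot)$ and the quantitative smallness of $s^{2-m}\int_{B_s}\abs{D_{L'} u}^2$ across the limit; once this is in hand, the cone-splitting identification of $u_\infty$ as $k$-symmetric and the elementary radial computation at $x_\infty$ close the proof.
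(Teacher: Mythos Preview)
Your compactness-contradiction scheme is a natural route, and the geometric computation at $x_\infty$ (gaining the extra direction $x_\infty/|x_\infty|$ because $\dist(x_\infty,L)\ge\rho$) is the right idea. The difficulty you yourself flag at the end is, however, fatal as stated: the theorem is for \emph{stationary} $Q$-harmonic maps, and no compactness theorem with energy convergence is available for that class in this paper (Theorem~\ref{HM_Q_compactness} is only for minimizers). Without strong $W^{1,2}$ convergence you cannot pass either the pinching $\theta_{u_j}(y_i^j,\cdot)\to\theta_{u_\infty}(y_i^\infty,\cdot)$ or the directional smallness $s^{2-m}\int_{B_s}|D_{L'}u_j|^2$ to the limit, so the argument collapses. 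Even in the single-valued stationary case such compactness is a deep issue (bubbling, defect measures), and nothing of that sort is set up here.

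The paper avoids compactness entirely by proving the result \emph{directly and quantitatively}. The key input is Lemma~\ref{lem:pinching_control1}, a purely stationary estimate derived from the monotonicity identity \eqref{eq:monotonicity identity-integrated lower bound}: if $\{y_i\}_{i=0}^k\subset B_r(0)$ $(\rho r)$-effectively span $L$, then
\[
r^{-m}\int_{B_r(0)}\bigl(r^2|D_{\hat L}u|^2+|D_v u|^2\bigr)\le C(m,\rho)\sum_{i=0}^k\bigl(\theta(y_i,4r)-\theta(y_i,2r)\bigr),
\]
where $v(z)=z-\pi_L(z)$. This gives, for any $x\in B_{1/2}\setminus B_\rho(L)$ and any $\sigma<\tfrac12$, the bound $\int_{B_\sigma(x)}|D_V u|^2\le C\delta+C\Lambda\sigma^m$ with $V=\hat L\oplus\mathrm{span}(v(x)/|v(x)|)$ of dimension $k+1$ --- exactly your ``extra direction'' estimate, but obtained effectively rather than in the limit. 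To satisfy condition~(a) of Definition~\ref{def:quant_sym}, one then runs a pigeonhole on the dyadic telescoping $\sum_i\bigl(\theta(x,2^{-i}\bar\sigma)-\theta(x,2^{-i-1}\bar\sigma)\bigr)\le c\Lambda$ to locate a scale $r_x\in[\bar r,\bar\sigma]$ with $\theta(x,2r_x)-\theta(x,r_x)\le\e$, where $\bar\sigma$ is chosen so that $C\Lambda\bar\sigma^2\le\e/2$ and $\bar r=\tau\bar\sigma$ for a dimensional $\tau(m,\Lambda,\e)$. Finally $\delta$ is chosen small against $\bar r^{2-m}$. Everything depends only on $m,\rho,\Lambda,\e$, and no limiting procedure is needed.

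If you restrict to minimizing maps, your argument would go through (modulo uniformizing $s_0$, which the contradiction structure handles). But for the stationary statement as written, you must replace the compactness step by the direct estimate of Lemma~\ref{lem:pinching_control1} plus the pigeonhole; that is precisely what the paper does.
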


\begin{corollary} \label{corollario_finale_bello}
For every $\rho > 0$, there exists $\delta = \delta(m,\N,N,Q,\Lambda,\rho) > 0$ with the following property. Let $u \colon B_{10}(0) \subset \R^{m} \to \A_{Q}(\N)$ be a $W^{1,2}$ map with energy bounded by $\Lambda$, and let $r \leq 1$. 
\begin{itemize}
\item[$(i)$] In case $u$ is energy minimizing, if there exist $m-2$ points $\lbrace y_{i} \rbrace_{i=0}^{m-3} \subset B_{r}(0)$ which $(\rho \cdot r)$-effectively span an $(m-3)$-dimensional affine subspace $L \subset \R^{m}$ and such that
\[
\theta(y_{i}, 4r) - \theta(y_i, 2r) < \delta \quad \mbox{for every } i=0,\dots,m-3,
\]
then
\[
\left( {\rm sing}_{H}(u) \cap B_{\frac{r}{2}}(0) \right) \setminus B_{\rho r}(L) = \emptyset;
\]
\item[$(ii)$] in case $u$ is single-valued and stationary harmonic, if there exist $m-1$ points $\lbrace y_{i} \rbrace_{i=0}^{m-2} \subset B_{r}(0)$ which $(\rho \cdot r)$-effectively span an $(m-2)$-dimensional affine subspace $L \subset \R^{m}$ and such that
\[
\theta(y_{i}, 4r) - \theta(y_i, 2r) < \delta \quad \mbox{for every } i=0,\dots,m-2,
\]
then
\[
\left( {\rm sing}_{H}(u) \cap B_{\frac{r}{2}}(0) \right) \setminus B_{\rho r}(L) = \emptyset.
\]
In particular, if $m=3$ and $u$ is a $Q$-valued energy minimizer, and if $\theta(y_0,4r) - \theta(y_0,2r) < \delta$ then
\[
B_{\frac{r}{2}}(y_0) \setminus B_{\rho r}(y_0) \subset {\rm reg}_{H}(u)\, .
\]
The same holds if $m=2$ and $u$ is single-valued and stationary.
\end{itemize}
\end{corollary}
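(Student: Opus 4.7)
The plan is to combine the quantitative $\varepsilon$-regularity theorem (Theorem \ref{th:quantitative_eps_reg}) with the ``effective non-degeneracy'' statements for the top stratum, namely Proposition \ref{prop:empty_n-2_eps_strata} in the minimizing case and Proposition \ref{prop.empty n-1 strata} in the stationary single-valued case. The point is that Theorem \ref{th:quantitative_eps_reg} already localizes the quantitative stratum $\cS^{k}_{\varepsilon, r\bar r}(u)$ near the effectively-spanned affine plane $L$, and the missing piece is to guarantee that the singular set is actually contained in such a stratum for some $\varepsilon$ that only depends on the geometric data.

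For part $(i)$, I first fix the constant $\varepsilon_{0}=\varepsilon_{0}(m,\N,N,Q,\Lambda) > 0$ given by Proposition \ref{prop:empty_n-2_eps_strata}, so that
\[
\sing_{H}(u)\cap B_{1}(0)=\cS^{m-1}(u)\cap B_{1}(0)\;\subset\;\cS^{m-3}_{\varepsilon_{0}}(u)\;=\;\bigcap_{s>0}\cS^{m-3}_{\varepsilon_{0},s}(u).
\]
I then apply Theorem \ref{th:quantitative_eps_reg} with $k=m-3$, $\varepsilon=\varepsilon_{0}$ and with the chosen $\rho$. The hypothesis of the theorem is met: I have $k+1=m-2$ points $\{y_{i}\}_{i=0}^{m-3}\subset B_{r}(0)$ which $(\rho r)$-effectively span the $(m-3)$-plane $L$, and each of them satisfies $\theta(y_{i},4r)-\theta(y_{i},2r)<\delta$, where $\delta=\delta(m,\N,N,Q,\Lambda,\rho)$ is the constant produced by the theorem in correspondence of $\varepsilon_{0}$. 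The conclusion of Theorem \ref{th:quantitative_eps_reg} then gives
\[
\bigl(\cS^{m-3}_{\varepsilon_{0},r\bar r}(u)\cap B_{r/2}(0)\bigr)\setminus B_{\rho r}(L)=\emptyset,
\]
and since $\sing_{H}(u)\cap B_{r/2}(0)\subset\cS^{m-3}_{\varepsilon_{0}}(u)\subset\cS^{m-3}_{\varepsilon_{0},r\bar r}(u)$ the claimed inclusion follows immediately. Part $(ii)$ is entirely analogous: I now use Proposition \ref{prop.empty n-1 strata} in place of Proposition \ref{prop:empty_n-2_eps_strata} to find $\varepsilon_{0}=\varepsilon_{0}(m,\N)$ with $\sing_{H}(u)\subset\cS^{m-2}_{\varepsilon_{0}}(u)$, and then invoke Theorem \ref{th:quantitative_eps_reg} with $k=m-2$, using the $m-1$ points that effectively span the $(m-2)$-dimensional plane $L$.

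For the two special low-dimensional cases in the statement, a single point $y_{0}$ trivially $(\rho r)$-effectively spans the $0$-dimensional affine subspace $L=\{y_{0}\}$ (the effective spanning condition is vacuous when $k=0$). Hence when $m=3$, one point satisfying the pinching hypothesis is exactly the data required by $(i)$, applied after translating coordinates so that $y_{0}$ becomes the origin (this preserves stationarity/minimality and the energy bound on $B_{10}$, up to an irrelevant enlargement that can be absorbed into $\Lambda$). Since $B_{\rho r}(L)=B_{\rho r}(y_{0})$ in this case, the conclusion $B_{r/2}(y_{0})\setminus B_{\rho r}(y_{0})\subset\reg_{H}(u)$ is precisely the output of $(i)$ after translating back. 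The case $m=2$ with $u$ single-valued stationary is handled identically using $(ii)$.

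There is no genuine obstacle beyond a bookkeeping check on the domain of definition and the translation argument: all the hard analytical content is already encapsulated in Theorem \ref{th:quantitative_eps_reg} and in the two ``effective $\varepsilon$-regularity'' statements for the top singular stratum. The only point worth being careful about is that the constants $\varepsilon_{0}$ produced by Propositions \ref{prop:empty_n-2_eps_strata} and \ref{prop.empty n-1 strata} depend on $\Lambda$ and the geometry, but \emph{not} on $\rho$, so the quantifier order in the statement (``for every $\rho$ there is $\delta$'') is respected: one first fixes $\varepsilon_{0}$ from the geometric data, then chooses $\delta=\delta(m,\N,N,Q,\Lambda,\rho)$ via Theorem \ref{th:quantitative_eps_reg} applied with that $\varepsilon_{0}$.
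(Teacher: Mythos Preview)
Your proof is correct and follows exactly the approach of the paper, which simply records that the corollary ``follows immediately from Theorem \ref{th:quantitative_eps_reg} and Propositions \ref{prop:empty_n-2_eps_strata} for the minimizing case and \ref{prop.empty n-1 strata} for the stationary harmonic case.'' You have merely written out the details of this one-line argument, including the bookkeeping for the low-dimensional special cases.
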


\begin{proof}[Proof of Corollary \ref{corollario_finale_bello}]
It follows immediately from Theorem \ref{th:quantitative_eps_reg} and Propositions \ref{prop:empty_n-2_eps_strata} for the minimizing case and \ref{prop.empty n-1 strata} for the stationary harmonic case.
\end{proof}

For the proof of Theorem \ref{th:quantitative_eps_reg} we will need the following lemma.

\begin{lemma} \label{lem:pinching_control1}
Let $u \in W^{1,2}(B_{10}(0), \A_{Q}(\N))$ be a stationary $Q$-harmonic map, and let $r \leq 1$. If $\{y_{i}\}_{i=0}^{k} \subset B_{r}(0)$ $(\rho\cdot r)$-effectively span a $k$-dimensional affine subspace $L \subset \R^{m}$, then
\begin{equation}\label{eq:pinching_control1}
r^{-m} \int_{B_{r}(0)} \left( r^{2} |D_{\hat{L}}u(z)|^{2} + |D_{v}u(z)|^{2} \right) \, \mathrm{d}z \leq C(m,\rho) \sum_{i=0}^{k} \left( \theta(y_{i}, 4r) - \theta(y_{i},2r) \right),
\end{equation} 
where $\hat{L}$ is the linear part of $L$ and $v$ is the vector field $v(z) := D \left(\frac{1}{2}  \dist^{2}(z,L) \right)$.
\end{lemma}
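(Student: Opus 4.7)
The plan is to combine the monotonicity inequality \eqref{eq:monotonicity identity-integrated lower bound}, applied separately at each of the $k+1$ vertices $y_i$, with a pointwise algebraic decomposition that writes the directions appearing on the left-hand side of \eqref{eq:pinching_control1} as linear combinations of the radial directions $r_{y_i}(z)$. By scaling, it suffices to treat $r=1$. Note also, by direct computation, that $v(z) = z - \pi_L(z)$, the orthogonal projection of $z-y_0$ onto $\hat{L}^\perp$.

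The key identity is
\[
w_i := y_i - y_0 = (z - y_0) - (z - y_i) = |z - y_0|\, r_{y_0}(z) - |z - y_i|\, r_{y_i}(z), \qquad z \in \R^m.
\]
Since $\{w_i\}_{i=1}^k$ is a basis of $\hat{L}$, Remark \ref{rmk:eff_spanning_properties} guarantees that every $e \in \hat L$ can be written as $e = \sum_{i=1}^k c_i w_i$ with $|c_i| \le C(m,\rho)\, |e|$. Analogously, writing $\pi_L(z) - y_0 = \sum_i \alpha_i(z)\, w_i$, one has $|\alpha_i(z)| \le C(m,\rho)$ for every $z \in B_1(0)$, and therefore
\[
v(z) = \Bigl(1 - \sum_i \alpha_i(z)\Bigr)(z - y_0) + \sum_i \alpha_i(z)\, (z - y_i).
\]

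Since each measurable selection branch $u_\ell$ satisfies that $\tau \mapsto D u_\ell(z) \cdot \tau$ is linear, inserting the two decompositions above and applying the elementary inequality $|\sum_j \beta_j v_j|^2 \leq (k+1) \sum_j \beta_j^2\, |v_j|^2$ branchwise, then summing over $\ell$ via $|D_\tau u|^2 = \sum_\ell |D u_\ell \cdot \tau|^2$, yields the pointwise estimate
\[
|D_e u(z)|^2 + |D_{v(z)} u(z)|^2 \le C(m,\rho) \sum_{i=0}^{k} |z-y_i|^2\, |D_{r_{y_i}(z)} u(z)|^2
\]
for every unit vector $e \in \hat{L}$. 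Summing over an orthonormal basis of $\hat{L}$, integrating on $B_1(0)$, and exploiting $|z-y_i|^{2} \le 2|z-y_i|$ together with the inclusion $B_1(0) \subset B_2(y_i)$, the monotonicity inequality \eqref{eq:monotonicity identity-integrated lower bound} applied at each $y_i$ with radius $4$ produces
\[
\int_{B_1(0)} |z-y_i|^2\, |D_{r_{y_i}} u|^2 \, dz \le 2 \int_{B_2(y_i)} |z-y_i|\, |D_{r_{y_i}} u|^2 \, dz \le C \bigl( \theta(y_i,4) - \theta(y_i,2) \bigr).
\]
Summing in $i$ and scaling back yields \eqref{eq:pinching_control1}.

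The main subtlety concerns the dimensional correctness of the coefficient bounds $|c_i|, |\alpha_i(z)| \le C(m,\rho)$: this follows from the fact that the $(\rho r)$-effective spanning property at scale $r$ becomes $\rho$-effective spanning at scale $1$ after the rescaling $y \mapsto y/r$, where Remark \ref{rmk:eff_spanning_properties} applies directly. A second point worth emphasising is that although the $Q$-valued differential $Du$ is not a linear object, it is linear on each selection branch $u_\ell$; this is what legitimates the branchwise application of the Cauchy--Schwarz inequality in the sublinearity estimate for $|D_\tau u|^2$.
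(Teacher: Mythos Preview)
Your proof is correct and follows essentially the same approach as the paper: both arguments write $y_i - y_0 = (z-y_0) - (z-y_i)$, invoke the coefficient bounds from Remark~\ref{rmk:eff_spanning_properties} to control $|D_{\hat L}u|$ and $|D_v u|$ in terms of $\sum_i |Du(z)\cdot(z-y_i)|^2$, and then apply the monotonicity inequality \eqref{eq:monotonicity identity-integrated lower bound} at each $y_i$. The only cosmetic differences are that you scale to $r=1$ first and phrase the estimate pointwise via $|z-y_i|\,r_{y_i}(z)$, whereas the paper works directly with $Du(z)\cdot(z-y_i)$ and records the intermediate inequality \eqref{eq:control_radial_derivative}.
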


\begin{proof}
It is an immediate consequence of \eqref{eq:monotonicity identity-integrated lower bound} that there exists a constant $C = C(m)$ such that
\begin{equation} \label{eq:control_radial_derivative}
r^{-m} \int_{B_{r}(x)} |D u(z) \cdot (z-x)|^{2} \, \mathrm{d}z \leq \int_{B_{r}(x)} \frac{\abs{z-x}}{r^{m-1}} \left| D u(z) \cdot \frac{z-x}{\abs{z-x}}  \right|^{2} \, \mathrm{d}z \leq C(m) \left( \theta(x,2r) - \theta(x,r) \right)
\end{equation}
whenever $B_{2r}(x) \subset B_{10}(0)$. Now, assume that $y_{0}, y_{1}, \dots, y_{k}$ are as in the statement, and observe that for every unit vector $e$ in the linear part $\hat{L}$ of $L$ there exists a unique set of numbers $\{ \alpha_{i} \}_{i=1}^{k}$ such that
\[
e = r^{-1} \sum_{i=1}^{k} \alpha_{i} (y_{i} - y_{0}), \quad |\alpha_{i}| \leq C(m,\rho).
\]
Hence, we get
\[
\begin{split}
r^{2-m} \int_{B_{r}(0)} |D_{e}u(z)|^{2} \, \mathrm{d}z &\leq C(m, \rho) r^{-m} \sum_{i=1}^{k} \int_{B_{r}(0)} \left| D u(z) \cdot (y_{i} - y_{0}) \right|^{2}\, \mathrm{d}z \\
&\leq C(m,\rho) r^{-m} \sum_{i=0}^{k} \int_{B_{r}(0)} \left| D u(z) \cdot (z - y_{i}) \right|^{2} \, \mathrm{d}z \\
&\leq C(m,\rho) r^{-m} \sum_{i=0}^{k} \int_{B_{2r}(y_i)} \left| D u(z) \cdot (z - y_{i}) \right|^{2} \, \mathrm{d}z \\
&\overset{\eqref{eq:control_radial_derivative}}{\leq} C(m,\rho) \sum_{i=0}^{k} \left( \theta(y_{i},4r) - \theta(y_{i},2r) \right).
\end{split}
\]

Summing over an orthonormal basis $e_{1},\dots,e_{k}$ of $\hat{L}$ produces
\begin{equation}\label{first_piece}
r^{2-m} \int_{B_{r}(0)} \left| D_{\hat{L}}u(z) \right|^{2} \, \mathrm{d}z \leq C(m,\rho) \sum_{i=0}^{k} \left( \theta(y_i, 4r) - \theta(y_i, 2r) \right).
\end{equation}

As for the second term, let $z \in B_{r}(0)$, and let $\pi := \pi_{L}(z)$ be the orthogonal projection of $z$ onto $L$. Of course,
\[
v(z) := D \left( \frac{1}{2} \dist^{2}(z,L) \right) = z - \pi.
\]
On the other hand, we have as usual that
\[
\pi = y_{0} + \sum_{i=1}^{k} \alpha_{i} (y_{i} - y_{0}), \quad |\alpha_{i}| \leq C(m,\rho) |\pi - y_0| \leq C(m,\rho)r,
\]
and thus
\[
v(z) = z - \left(  y_{0} + \sum_{i=1}^{k} \alpha_{i} (y_{i} - y_{0}) \right).
\]
Arguing as above, one concludes that also
\begin{equation} \label{second_piece}
r^{-m} \int_{B_{r}(0)} \left| D_{v}u(z) \right|^{2} \, \mathrm{d}z \leq C(m,\rho) \sum_{i=0}^{k} \left( \theta(y_i, 4r) - \theta(y_i, 2r) \right),
\end{equation}
which together with \eqref{first_piece} completes the proof of \eqref{eq:control_radial_derivative}.

\end{proof}

For $k = m-2$, the conclusions of the previous lemma can be improved using again the inner variation formula.

\begin{lemma} \label{lem:pinching_control2}
Let $u \in W^{1,2}(B_{10}(0), \A_{Q}(\N))$ be a stationary $Q$-harmonic map, and let $r \leq 1$. If $\{ y_{i} \}_{i=0}^{m-2} \subset B_{r}(0)$ $(\rho \cdot r)$-effectively span an $(m-2)$-dimensional affine subspace $L \subset \R^{m}$, then
\begin{equation} \label{eq:pinching_control2}
r^{-m} \int_{B_{r}(0)} \left( r^{2} |D_{\hat{L}} u|^{2} + |D_{\hat{L}^{\perp}}u|^{2} |v|^{2} \right) \leq C(m,\rho) \sum_{i=0}^{m-2} \left( \theta(y_{i}, 8r) - \theta(y_{i}, 4r) \right),
\end{equation}
where $\hat{L}$ is the linear part of $L$, $\hat{L}^{\perp}$ is its orthogonal complement in $\R^m$ and $v$ is the vector field $v(x) := D \left( \frac{1}{2} \dist^{2}(x,L) \right)$.
\end{lemma}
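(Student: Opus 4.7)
The plan is to translate the coordinate system so that $y_0 = 0$, which makes $L = \hat L$ a linear subspace and $v(x) = P_{\hat L^\perp}(x)$, while placing the translated $\{y_i - y_0\}_{i=0}^{m-2}$ inside $B_{2r}(0)$ with $(\tfrac{\rho}{2}\cdot 2r)$-effective spanning of $\hat L$. The key structural observation is that, since $\dim \hat L^\perp = 2$, the pair $\{v, v^\perp\}$ (with $v^\perp$ the $90^\circ$ rotation of $v$ inside $\hat L^\perp$) is an orthogonal basis of $\hat L^\perp$ on $\{v\ne 0\}$, giving the pointwise identity
\[
|D_{\hat L^\perp}u|^{2}|v|^{2} \;=\; |D_{v}u|^{2} + |D_{v^\perp}u|^{2}.
\]
Applying Lemma~\ref{lem:pinching_control1} at the scale $2r$ yields
\[
(2r)^{-m}\!\int_{B_{2r}(0)}\!\bigl((2r)^{2}|D_{\hat L}u|^{2}+|D_{v}u|^{2}\bigr)\;\le\; C(m,\rho)\sum_{i=0}^{m-2}\!\bigl(\theta(y_i,8r)-\theta(y_i,4r)\bigr),
\]
so the $r^{2}|D_{\hat L}u|^{2}$ contribution in \eqref{eq:pinching_control2} and the $|D_{v}u|^{2}$ half of the second contribution are already controlled. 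The whole problem therefore reduces to bounding $\int|D_{v^\perp}u|^{2}$ by the same right-hand side.

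To do so I would feed the inner variation formula \eqref{eq:inner_variation_formula} the vector field
\[
X(x) \;:=\; \chi\!\bigl(|x|^{2}/2\bigr)\,v(x), \qquad \chi(s)\;:=\;(2r^{2}-s)_{+},
\]
which is supported in $B_{2r}(0)$ and satisfies $\chi'(|x|^{2}/2)=-\mathbf{1}_{|x|\le 2r}$. Using $Dv=P_{\hat L^\perp}$, the symmetry of the stress tensor $T$, the codimension-$2$ identity $T:P_{\hat L^\perp}=2|D_{\hat L}u|^{2}$, the algebraic expansion $|Du|^{2}|v|^{2}-2|D_{v}u|^{2}=|v|^{2}|D_{\hat L}u|^{2}+|D_{v^\perp}u|^{2}-|D_{v}u|^{2}$ (where the $|D_{v^\perp}u|^{2}$ term enters precisely thanks to the $2$-dimensionality of $\hat L^\perp$), and the identity $x\cdot v=|v|^{2}$, a direct computation of $\sum_{ij}T_{ij}D_{i}X^{j}$ transforms $\int T:DX=0$ into
\[
\int_{B_{2r}(0)}\! |D_{v^\perp}u|^{2} \;=\; \int_{B_{2r}(0)}\!\bigl[(4r^{2}-|x|^{2}-|v|^{2})|D_{\hat L}u|^{2} + |D_{v}u|^{2} + 2\langle D_{\tilde y}u,\,D_{v}u\rangle\bigr],
\]
where $\tilde y:=P_{\hat L}(x)$ is the $\hat L$-component.

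I would then bound the right-hand side termwise. On $B_{2r}(0)$ one trivially has $|4r^{2}-|x|^{2}-|v|^{2}|\le 4r^{2}$; the cross-term is handled by Young's inequality together with the sharp pointwise bound $|D_{\tilde y}u|^{2}\le|\tilde y|^{2}|D_{\hat L}u|^{2}\le 4r^{2}|D_{\hat L}u|^{2}$, which gives $|2\langle D_{\tilde y}u,D_{v}u\rangle|\le 4r^{2}|D_{\hat L}u|^{2}+|D_{v}u|^{2}$. Collecting these,
\[
\int_{B_{2r}(0)}|D_{v^\perp}u|^{2} \;\le\; 8r^{2}\!\int_{B_{2r}(0)}|D_{\hat L}u|^{2} \;+\; 2\!\int_{B_{2r}(0)}|D_{v}u|^{2},
\]
and both integrals on the right are exactly those controlled by the rescaled Lemma~\ref{lem:pinching_control1}. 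Restricting the conclusion from $B_{2r}(0)$ (in the translated coordinates) to $B_{r}(-y_0)$, which is $B_{r}(0)$ in the original coordinates, completes the proof.

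The main obstacle is the selection of the vector field. A naive ansatz such as $X=\eta(x)|v|^{2}v$ with an arbitrary cutoff $\eta$ would, after integration by parts, produce a boundary term $\int|v|^{2}D\eta\cdot Tv$ which in general is only controlled by the total energy bound $\Lambda$ rather than by the pinching $\sum_i(\theta(y_i,8r)-\theta(y_i,4r))$ — and this is too weak. The specific choice $X=\chi(|x|^{2}/2)v$ with a piecewise-linear $\chi$ is precisely the one that makes the resulting identity ``self-closing'': every term appearing once $\chi'$ has been replaced by the indicator $-\mathbf{1}_{B_{2r}(0)}$ is either bounded directly by the rescaled Lemma~\ref{lem:pinching_control1}, or absorbed via the pointwise inequality $|D_{\tilde y}u|\le|\tilde y||D_{\hat L}u|$ and Young's inequality — so the ambient energy bound $\Lambda$ never has to be invoked.
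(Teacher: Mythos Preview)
Your proof is correct and follows essentially the same strategy as the paper: test the inner variation formula \eqref{eq:inner_variation_formula} with a vector field of the form (cutoff)$\cdot v$, exploit the codimension-two identity $\mathrm{tr}\,P_{\hat L^\perp}=2$ to kill the dangerous $|Du|^{2}$ term, and reduce everything to the quantities already controlled by Lemma~\ref{lem:pinching_control1}.

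The only noteworthy difference is the choice of cutoff. The paper uses a \emph{product} cutoff $X(y,z)=\psi(y)\,\varphi(|z|^{2})\,z$ (with $\psi$ a bump in the $\hat L$-variables and $\varphi(t)=(1-t)_{+}$), which yields directly $\int_{B_{1}}|Du|^{2}|z|^{2}\le C\int_{B_{2}}\bigl(|D_{\hat L}u|^{2}+|D_{v}u|^{2}\bigr)$ without ever introducing $v^{\perp}$. Your \emph{radial} cutoff $X=\chi(|x|^{2}/2)\,v$ with $\chi(s)=(2r^{2}-s)_{+}$ forces you to first split $|D_{\hat L^{\perp}}u|^{2}|v|^{2}=|D_{v}u|^{2}+|D_{v^{\perp}}u|^{2}$ and then isolate the $|D_{v^{\perp}}u|^{2}$ piece; the resulting exact identity is pleasant and makes the role of each term transparent, at the cost of a slightly longer computation. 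Either route works; the paper's product cutoff is a touch more economical, while your radial cutoff and the accompanying discussion of why a generic cutoff would leak a $\Lambda$-sized boundary term give more conceptual insight into why this particular testing succeeds.
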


\begin{proof}
The proof is very similar to the one of Proposition \ref{prop.empty n-1 strata}: also in this case, we will make use of the stationary equation with a suitable choice of the vector field $X$. Without loss of generality, we can assume that $r = 1$. Furthermore, modulo translations and rotations we can assume that $L = \lbrace x_{i} = 0 \, \colon \, i = m-1,m \rbrace$.
As usual, coordinates on $L$ and $L^{\perp}$ will be denoted by $y$ and $z$ respectively, and in order to simplify our notation the vectors $(y,0)$ and $(0,z)$ in $L \times L^{\perp}$ will be simply denoted by $y$ and $z$. Observe that under these assumptions one has $v(x) = z$ for every $x = (y,z) \in B_{1}$. Now, let $\psi = \psi(y)$ be a cut-off function of the variable $y \in L$, with $\psi \equiv 1$ in $B_{1}^{m-2}$, $\spt(\psi) \subset B_{2}^{m-2}$ and $|D \psi| \leq 1$. Let also $\varphi(t) := \max\lbrace 1-t, 0 \rbrace$, and consider the vector field $X(y,z) := \psi(y) \varphi(|z|^{2}) z = \psi \varphi z$.
We can immediately compute $D X = \psi \varphi P^{\perp} + \varphi z \otimes D \psi + 2 \psi \varphi' z \otimes z$. With this choice of $X$, the inner variation formula \eqref{eq:inner_variation_formula} reads
\[
\begin{split}
0 &= \int |D u|^{2} \left( 2 \psi \varphi + 2 \psi \varphi' |z|^{2} \right) - 2 \left( \psi \varphi |D_{L^{\perp}}u|^{2} + \varphi \langle D u \cdot z , D u \cdot D \psi \rangle + 2 \psi \varphi' |D u \cdot z|^{2} \right)\\
&= 2 \int  \psi \varphi |D_{L}u|^{2} + \psi \varphi' |D u|^{2} |z|^{2} - \left( \varphi \langle D u \cdot z, D u \cdot D \psi \rangle + 2 \psi \varphi' |D u \cdot z|^{2} \right).
\end{split}
\]

In particular, since $\varphi'(|z|^{2}) = - \chi_{\lbrace |z| \leq 1 \rbrace}$, $\left. \psi \right|_{\lbrace |y| \leq 1 \rbrace} \equiv 1$ and $B_{1} \subset B_{1}^{m-2} \times B_{1}^{2} \subset B_{2}$, we immediately deduce
\[
\int_{B_{1}} |D u|^{2} |z|^{2} \leq C \int_{B_2} \left( |D_{L}u|^{2} + |D u \cdot z|^{2} \right).
\]

The estimate \eqref{eq:pinching_control2} then follows from Lemma \ref{lem:pinching_control1}.
\end{proof}

We are now ready to prove Theorem \ref{th:quantitative_eps_reg}.

\begin{proof}[Proof of Theorem \ref{th:quantitative_eps_reg}]
Since the statement is scale-invariant, there is no loss of generality in proving it only in the case $r=1$. Let $\{ y_{i} \}_{i=0}^{k} \subset F$ $\rho$-effectively span the $k$-dimensional subspace $L$, and let $x$ be any point in $B_{\frac{1}{2}}(0) \setminus B_{\rho}(L)$. The goal is to prove that $x \notin \cS^{k}_{\e,\overline{r}}(u)$ for some $\overline{r} > 0$, and thus that there exists $\overline{r} > 0$ and a radius $r_x\in [\overline{r} ,1)$ such that the ball $B_{r_{x}}(x)$ is $(k+1,\e)$-symmetric for $u$. Let $0 < \delta \ll 1$ to be chosen later. Since $x \in B_{\frac{1}{2}}(0)$, $B_{\sigma}(x) \subset B_{1}(0)$ for every $0 < \sigma < \frac12$. Hence, we deduce from Lemma \ref{lem:pinching_control1} that
\[
\int_{B_{\sigma}(x)} |D_{\hat{L}}u|^{2} \leq C(m,\rho) \delta 
\]
for any such $\sigma$. In order to gain another direction along which the energy is small, we let $v(z) := D \left( \frac12 \dist^{2}(z,L)\right)$, and we set $e := \frac{v(x)}{|v(x)|}$. Note that $|v(x)| = \dist(x,L) \geq \rho$. Again by Lemma \ref{lem:pinching_control1} and by the monotonicity of the function $r \mapsto \E(u, B_{r}(x))$, we have
\[
\begin{split}
\int_{B_{\sigma}(x)} |D_{e} u|^{2} &\leq \rho^{-2} \int_{B_{\sigma}(x)} |D u(z) \cdot v(x)|^{2} \\
 &\leq 2\rho^{-2} \left( \int_{B_{\sigma}(x)} |D u(z) \cdot v(z)|^{2} + \int_{B_{\sigma}(x)} |D u(z) \cdot (v(z) - v(x))|^{2} \right) \\
 &\leq C \int_{B_{1}(0)} |D_{v}u|^{2} + C \sigma^{2} \int_{B_{\sigma}(x)} |D u|^{2} \\
 &\leq C \delta + C \Lambda \sigma^{m}, 
\end{split}
\]
where $C = C(m,\rho)$. Hence, if $V := \hat{L} \oplus {\rm span}(e)$ then
\begin{equation} \label{higher_symmetry}
\int_{B_{\sigma}(x)} |D_{V}u|^{2} \leq C \delta + C \Lambda \sigma^{m}
\end{equation}
for every $0 < \sigma < \frac12$. Note that $\dim(V) = k+1$. 

Fix now $\e > 0$, and let $\overline{\sigma} = \overline{\sigma}(m,\rho,\Lambda,\e) < \frac12$ be such that $C \Lambda \overline{\sigma}^{2} \leq \frac{\e}{2}$. We claim that for any $0 < \tau \ll 1$ there exists $\tau \overline{\sigma} \leq r_{x} < \overline{\sigma}$ such that
\begin{equation} \label{good_radius}
\theta(x, 2r_{x}) - \theta(x, r_{x}) \leq \frac{2c_{1}(m) \Lambda}{- \log_{2}(2\tau)} \, .
\end{equation}
Indeed, otherwise for any integer $M \in \left( \frac{3}{4} \log_{2}\left( \frac{1}{2\tau}\right), \log_{2}\left( \frac{1}{2\tau} \right) \right)$ we would get
\[
c_{1}(m)\Lambda \geq \theta(x,\overline{\sigma}) \geq \sum_{i=0}^{M} \theta(x, 2^{-i} \overline{\sigma}) - \theta(x,2^{-(i+1)} \overline{\sigma}) \geq M \frac{2c_{1}(m) \Lambda}{- \log_{2}(2\tau)} \geq \frac{3}{2} c_{1}(m) \Lambda , 
\]
which is impossible. Hence, if we fix $\tau = \tau(m,\Lambda,\e)$ so small that $\frac{2 c_{1}(m) \Lambda}{- \log_{2}(2\tau)} \leq \e$, the above argument allows to conclude that if we set $\overline{r} := \tau \overline{\sigma}$ then there is a radius $r_{x} \in \left( \overline{r}, \overline{\sigma} \right)$ such that
\begin{equation} \label{conclusion:homo}
\theta(x,2r_{x}) - \theta(x,r_x) \leq \e.
\end{equation}
Furthermore, formula \eqref{higher_symmetry} with $r_{x}$ in place of $\sigma$ implies that
\begin{equation} \label{conclusion:sym}
r_{x}^{2-m} \int_{B_{r_x}(x)} |D_{V}u|^{2} \leq C \delta \overline{r}^{2-m} + C \Lambda \overline{\sigma}^{2}.
\end{equation}

We can finally chose $\delta = \delta(m,\rho,\Lambda,\e)$ such that $C \delta \overline{r}^{2-m} \leq \frac{\e}{2}$. From equations \eqref{conclusion:homo} and \eqref{conclusion:sym} we infer that $B_{r_x}(x)$ is $(k+1,\e)$-symmetric for $u$.  

\end{proof}

We conclude the section with the following proposition, according to which if the mollified energy is pinched enough at $k$ points spanning a $k$-plane $L$, then it is almost constant along this $L$.

\begin{proposition}\label{prop_unipinch}
Let $u$ satisfy Assumption \ref{ass:harmonic}. Let $0 < \rho < 1$ and $\eta > 0$ be fixed, and assume that $\theta(y,8) \leq E$ for every $y \in B_{1}(0)$. There exists $\delta_{0} = \delta_{0}(m,\N,N,Q,\Lambda, \rho, \eta) > 0$ such that if the set $F := \lbrace y \in B_{1}(0) \, \colon \, \theta(y,\rho) > E - \delta_0 \rbrace$ $(2\rho)$-effectively spans a $k$-dimensional affine subspace $L \subset \R^m$ then
\[
\abs{\theta(x,\rho)-E} < \eta \quad \mbox{for every } x \in L \cap B_{1}(0).
\]
\end{proposition}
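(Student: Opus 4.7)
The plan is a standard compactness/contradiction argument combined with the rigidity built into the monotonicity formula \eqref{eq:monotonicity identity-integrated}. Suppose the conclusion fails; then there exist sequences $\delta_j \downarrow 0$, maps $u_j$ satisfying Assumption \ref{ass:harmonic}, constants $E_j$, and points $\{y_i^j\}_{i=0}^{k} \subset B_1(0)$ that $(2\rho)$-effectively span a $k$-dimensional affine subspace $L_j$, with $\theta_{u_j}(y,8) \le E_j$ for every $y \in B_1(0)$ and $\theta_{u_j}(y_i^j,\rho) > E_j - \delta_j$ for each $i$, yet a point $x_j \in L_j \cap B_1(0)$ exists with $|\theta_{u_j}(x_j,\rho) - E_j| \ge \eta$. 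Since $\theta_{u_j}(\cdot,8)$ is bounded by $C\Lambda$, after passing to subsequences I may assume $E_j \to E$, $y_i^j \to y_i$, $x_j \to x \in L \cap \overline{B_1(0)}$, where $\{y_i\}$ still $(2\rho)$-effectively span an affine $k$-plane $L$ by Remark \ref{rmk:eff_spanning_properties}, and $u_j \to u$ in $L^2$ and locally in energy by the Compactness Theorem \ref{HM_Q_compactness}.

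Local energy convergence and the smoothness of the cutoff $\varphi$ give continuity of the mollified energy along the sequence, so passing to the limit yields $\theta_u(y,8) \le E$ on $B_1(0)$, $\theta_u(y_i,\rho) \ge E$ for every $i$, and $|\theta_u(x,\rho) - E| \ge \eta$. Monotonicity of $r\mapsto \theta_u(y_i,r)$ combined with $\theta_u(y_i,\rho) \ge E \ge \theta_u(y_i,8)$ forces $\theta_u(y_i,r) \equiv E$ on $[\rho,8]$. Applying \eqref{eq:monotonicity identity-integrated lower bound} to each dyadic pair $(r/2,r)$ with $2\rho \le r \le 8$ then produces $D_{r_{y_i}}u = 0$ a.e.\ on $B_4(y_i)$, which amounts to $0$-homogeneity of $u$ with respect to each $y_i$ on that ball. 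Equivalently, $Du(z)\cdot(z-y_i) = 0$ for a.e.\ $z \in B_4(y_i)$ and for each $i \in \{0,\dots,k\}$.

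Subtracting the identity for $y_0$ from the identity for $y_i$ gives $Du(z)\cdot(y_i - y_0) = 0$ a.e.\ on the common domain $\bigcap_{i=0}^{k} B_4(y_i) \supset B_3(0)$. The vectors $\{y_i - y_0\}_{i=1}^{k}$ span the linear part $\hat L$ of $L$, so $u$ is translation-invariant along $\hat L$ on the convex ball $B_3(0)$. Since $x,y_0 \in L$ differ by an element of $\hat L$, and both balls $B_\rho(x)$ and $B_\rho(y_0)$ lie in $B_2(0) \subset B_3(0)$, this translation invariance implies $\theta_u(x,\rho) = \theta_u(y_0,\rho) = E$, contradicting the lower bound $|\theta_u(x,\rho) - E|\ge \eta$ obtained above.

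The main subtlety I expect is the continuity of $\theta$ under the chosen convergence: the upper bound $\theta_u(y,8) \le E$ passes immediately to the limit from the local energy convergence, but the matching lower bound $\theta_u(y_i,\rho) \ge E$ is what truly drives the rigidity, and it also relies on the fact that we are working with minimizers so that energy is conserved in the limit rather than merely upper-semicontinuous. Once this is in hand, the rest is the standard ``multiple cone points imply translation invariance'' argument.
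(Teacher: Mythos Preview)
Your proof is correct and follows essentially the same compactness/contradiction argument as the paper's own proof: the paper also passes to a limit map, uses monotonicity to force $\theta_u(y_j,8)-\theta_u(y_j,\rho)=0$ at the spanning points, and then deduces translation invariance along $\hat L$ (citing Lemma~\ref{lem:pinching_control1} rather than spelling out the subtraction $Du\cdot(y_i-y_0)=0$ directly). The only cosmetic difference is that the paper observes from the outset that $|\theta(x,\rho)-E|\ge\eta$ reduces to $\theta(x,\rho)\le E-\eta$ by monotonicity, which slightly streamlines the final contradiction.
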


\begin{proof}
The proof is by contradiction. Assume that there are $0 < \rho_0 < 1$, $\eta_0 > 0$ and a sequence $u_{i}$ of maps satisfying Assumptions \ref{ass:harmonic} and the condition $\theta_{u_i}(y,8) \leq E$ everywhere in $B_1$, and with the property that for every $i \in \mathbb{N}$ there are points $\lbrace y^{i}_{j} \rbrace_{j=0}^{k} \subset B_{1}(0)$ with $\theta_{u_i}(y^{i}_{j}, \rho_0) > E - i^{-1}$ $(2\rho_0)$-effectively spanning a $k$-dimensional affine subspace $L_i \subset \R^m$ but with $\theta(x_i,\rho_0) \leq E - \eta_0$ for some $x_i \in L \cap B_{1}(0)$. As usual, without loss of generality we can assume that the subspace $L = L_i$ is fixed along the sequence. By the usual compactness for energy minimizers, modulo passing to a subsequence (not relabeled) the $u_i$'s converge in $L^2$ and in energy to a minimizer $u$. Up to further extracting another subsequence, we can also assume that $y^{i}_{j} \to y_{j}$ and $x_i \to x$. By Remark \ref{rmk:eff_spanning_properties}, also the $y_j$'s $(2\rho_0)$-effectively span $L$. Moreover, $\theta_{u}(y_j, \rho_0) \geq E$, and thus $\theta_{u}(y_j,8) - \theta_{u}(y_j, \rho_0) \leq 0$. By monotonicity, then it has to be 
\[
\theta_{u}(y_j,8) - \theta_{u}(y_j, \rho_0) = 0,
\] 
and hence, by Lemma \ref{lem:pinching_control1}, $u$ is invariant along $L$ in $B_{2}(0)$. Since $\theta_{u}(y_j, \rho_0) = E$, it has to be $\theta_u(y,\rho_0) =E$ everywhere on $L \cap B_{1}(0)$, which contradicts the existence of $x$.
\end{proof}

\section{Reifenberg theorem}
This section is dedicated to Reifenberg-type results needed for the proof of the main theorem. The results will only be quoted without proof, and they are in some sense a quantitative generalization of Reifenberg's topological disk theorem (see \cite{reif_orig}). Many generalizations of this landmark theorem are available in literature, we limit ourselves to citing \cite{toro,davidtoro} among the various present. Here we will need two versions of this theorem originally proved in \cite{naber-valtorta:harmonic}. 

Before quoting the theorems, we need the following definition of the the so-called Jones' $\beta_2$ numbers. 
\begin{definition}\label{definition_beta}
Given a positive Borel measure $\mu$ defined in $\R^m$, for all positive radii $r>0$ and dimensions $k\in \Na$, we define
\begin{gather}\label{eq_definition_beta}
 D_{\mu}^k (x,r) := \min\cur{\int_{\B r x}\frac{\dist^2(y,V)}{r^2}\,\frac{\mathrm{d}\mu(y)}{r^k} \,  \colon \, \mbox{$V \subset \R^m$ is an affine subspace with } \operatorname{dim}(V)=k}\, .
\end{gather}
Usually in literature this quantity is referred to as Jones' $\beta$-2 number $\beta_{2,\mu}^k(x,r)^2$. 
\end{definition}

$D$ captures in a scale invariant way the distance between the support of $\mu$ and some $k$-dimensional subspace $V$. Indeed, the factor $r^{-2}$ in the distance term makes the integrand scale-invariant, while $r^{-k}\mu$ is scale invariant if we assume that $\mu$ is Ahlfors upper $k$-regular, in the sense $\mu(\B r x)\leq Cr^k$ for some constant $C$. For example, this is the case if $\mu$ is the $k$-dimensional Hausdorff measure on a $k$-dimensional subspace $V \subset \R^m$. 

Here we mention two easy and crucial properties of $D$. 
\begin{lemma}[Bounds on $D$] Given two measures $\mu,\mu'$ such that $\mu' \leq\mu $, for all $x,r$ and $k\in \Na$ we can bound
 \begin{gather}
  D^k_{\mu'} (x,r)\leq D^k_{\mu} (x,r)\, .
 \end{gather}
Also, for all $x,y,r$ such that $\abs{x-y}\leq r$:
\begin{gather}\label{eq_beta_rough}
 D^k_{\mu}(x,r) \leq 2^{k+2} D^k_{\mu}(y,2r)\, .
\end{gather}
\end{lemma}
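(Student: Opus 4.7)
Both inequalities should follow immediately from using the optimal competitor subspace on the right-hand side as a test subspace on the left-hand side, together with the nonnegativity of the integrand in the definition of $D^k_\mu$.

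For the first bound, my plan is the following. Let $V^\ast$ be an affine $k$-plane attaining the minimum in the definition of $D^k_\mu(x,r)$ (existence of a minimizer follows by a standard compactness argument on the Grassmannian once one normalizes the intercept, but even without a minimizer one can just take an almost minimizer and let the error go to zero). Since the integrand $\dist^2(\cdot,V^\ast)/r^2 \geq 0$ and $\mu'\leq\mu$, we have
\[
D^k_{\mu'}(x,r) \leq \int_{B_r(x)} \frac{\dist^2(y,V^\ast)}{r^2}\,\frac{\mathrm{d}\mu'(y)}{r^k} \leq \int_{B_r(x)} \frac{\dist^2(y,V^\ast)}{r^2}\,\frac{\mathrm{d}\mu(y)}{r^k} = D^k_\mu(x,r).
\]

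For the second bound, the point is the elementary inclusion $B_r(x)\subset B_{2r}(y)$ whenever $|x-y|\leq r$. Letting $V^\ast$ now be an optimizer (or almost-optimizer) for $D^k_\mu(y,2r)$, I would use it as a competitor at the scale $(x,r)$ and estimate
\[
D^k_\mu(x,r) \leq \int_{B_r(x)} \frac{\dist^2(z,V^\ast)}{r^2}\,\frac{\mathrm{d}\mu(z)}{r^k} \leq \int_{B_{2r}(y)} \frac{\dist^2(z,V^\ast)}{r^2}\,\frac{\mathrm{d}\mu(z)}{r^k},
\]
where the second inequality again uses nonnegativity of the integrand. Rescaling the two factors of $r$ in the denominator to $2r$ costs a factor of $\tfrac{(2r)^{k+2}}{r^{k+2}}=2^{k+2}$, giving exactly $2^{k+2}D^k_\mu(y,2r)$.

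There is really no obstacle here beyond being careful about the existence of a minimizing plane; I would avoid this issue altogether by working with an $\eta$-almost-minimizer $V^\ast$ (so that the integral defining $D^k_\mu$ with $V^\ast$ is within $\eta$ of the infimum) and letting $\eta\downarrow 0$ at the end. Both statements are essentially monotonicity observations, and no properties of the measures beyond nonnegativity and the assumed domination $\mu'\leq\mu$ are needed.
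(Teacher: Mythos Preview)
Your proof is correct and matches the paper's approach: the paper simply states that ``the proof follows immediately from the definition,'' and what you have written is exactly the natural unpacking of that sentence. Your remark about using an $\eta$-almost-minimizer to sidestep existence of an optimal plane is a reasonable precaution, though here a minimizing plane does exist by elementary compactness.
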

\begin{proof} 
 The proof follows immediately from the definition.
\end{proof}

\subsection{Quantitative Reifenberg Theorems}
Assuming a sort of integral Carleson-type condition on the $D$ numbers, we can obtain uniform scale invariant properties on the measure $\mu$. For the reader's convenience, here we quote two key theorems that we are going to use in order to get the final estimates on the singular set of $Q$-valued minimizers. The first one is about upper Ahlfors bounds for discrete measures, and is quoted from \cite[Theorem 3.4]{naber-valtorta:harmonic}. This theorem is enough for our purposes, but we mention that some generalizations have been obtained in \cite{ENV_Reif}. The second important theorem is about rectifiability properties for general $\mu$, and is quoted from \cite[Theorem 1.1]{AzzTol}.

\begin{theorem}\cite[Theorem 3.4]{naber-valtorta:harmonic}\label{th_disc_reif}
For some constants $\delta_R(m)$ and $C_R(m)$ depending only on the dimension $m$, the following holds. Let $\{\B {r_x/10}{x}\}_{x\in \cD }\subseteq \B 3 0 \subset \R^m$ be a collection of pairwise disjoint balls with their centers $x\in \B 1 0$, and let $\mu\equiv \sum_{x\in \cD} r_x^k \delta_{x}$ be the associated measure. Assume that for each $B_r(x)\subseteq B_2$
\begin{gather}\label{eq_reif_ass}
 \int_{\B r x }\ton{\int_0^r D^k_\mu(y,s) \,{\frac{\mathrm{d}s}{s}}}\, \mathrm{d}\mu(y)<\delta_R^2 r^{k}\, .
\end{gather}
Then, we have the uniform estimate
\begin{gather}
\sum_{x\in \cD} r_x^k<C_R(m)\, .
\end{gather}
\end{theorem}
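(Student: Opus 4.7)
The plan is to exhibit the portion of $\spt(\mu)$ in $\B 1 0$ as essentially contained in a bi-Lipschitz image of a $k$-dimensional disc, with bi-Lipschitz constant arbitrarily close to $1$ provided $\delta_R$ is taken small enough. Once this structural fact is in place, the hypothesis that the balls $\{\B{r_x/10}{x}\}_{x\in\cD}$ are pairwise disjoint immediately converts the bound on the $k$-Hausdorff content of the image into the required packing estimate $\sum_{x\in\cD} r_x^k \leq C_R(m)$, by a standard volume argument (each disjoint ball $\B{r_x/10}{x}$ contributes a fixed fraction of $r_x^k$ to the $k$-content of its own $r_x$-neighborhood in $T_\infty$).

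To produce this parametrization, I would construct inductively on dyadic scales $r_j = 2^{-j}$ a sequence of smooth $k$-dimensional manifolds $T_j \subset \R^m$ with the following properties: $T_j$ lies within Hausdorff distance $\eta\, r_j$ of the portion of $\spt(\mu)$ visible at scale $r_j$ (with $\eta=\eta(m)$ small), and on each ball $\B{r_j}{z}$ centered near $\spt(\mu)$ it agrees, up to distortion $O(\eta)$, with a best-fit plane $V_{z,r_j}$ approximately achieving the infimum defining $D^k_\mu(z, r_j)$. The $T_j$'s are obtained by fixing an $r_j$-dense, $(r_j/5)$-separated net of points near $\spt(\mu)$ via Vitali, choosing best-fit planes at those points, and gluing them with a partition of unity subordinate to the covering.

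The core analytic step is the comparison between consecutive manifolds: writing $T_{j+1}$ locally as a graph of a function $\psi_j$ over $T_j$, one controls the $L^2$-average of $D\psi_j$ on $\B{r_j}{z}$ by a multiple of $D^k_\mu(z, r_j)$, plus lower-order errors coming from the partition of unity. The Carleson-type hypothesis \eqref{eq_reif_ass}, which amounts to the dyadic summability estimate
\[
\sum_{j} \int_{\B{r}{x}} D^k_\mu(y, r_j) \, \mathrm{d}\mu(y) \lesssim \delta_R^2\, r^k,
\]
then allows one to telescope the transition maps $T_j \to T_{j+1}$ and conclude that the composed map $T_0 \to T_j$ is uniformly bi-Lipschitz in $j$ with constant as close to $1$ as one wishes, provided $\delta_R$ is small. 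Passing to the limit $j \to \infty$ produces $T_\infty$ and the required parametrization, and each $x \in \cD$ with $r_x \simeq r_j$ lies within $C \eta\, r_j$ of $T_\infty$.

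The main obstacle is the bootstrap nature of the argument: to make sense of the Carleson integral at intermediate scales and to control the number of points selected in the Vitali steps, one must already know the upper Ahlfors bound $\mu(\B s y) \lesssim s^k$, which is essentially the conclusion. The standard way around this is a smallest-counterexample / continuous induction argument: assume the conclusion fails, take the infimal scale at which some sub-collection $\cD' \subset \cD$ violates the packing bound, and derive a contradiction from the fact that at all strictly finer scales the desired bound is already available, so the manifold construction above goes through and forces $\cD'$ to pack efficiently against $T_\infty$. The delicate interplay between the Vitali selection, the partition-of-unity gluing (which must not amplify the $\beta$-errors) and this bootstrap is the technically most demanding point of the proof.
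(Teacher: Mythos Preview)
The paper does not prove this statement at all: Theorem~\ref{th_disc_reif} is explicitly quoted from \cite[Theorem 3.4]{naber-valtorta:harmonic} without proof. The surrounding text states that ``the results will only be quoted without proof,'' so there is no argument in the paper to compare your proposal against.

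That said, your outline is a reasonable high-level description of the strategy actually used in \cite{naber-valtorta:harmonic}: the inductive construction of approximating $k$-manifolds $T_j$ at dyadic scales via best-fit planes and partition-of-unity gluing, the control of the transition maps through the Carleson condition on the $D^k_\mu$ numbers, and the bootstrap (induction-on-scales) mechanism to close the circularity between the Ahlfors bound and the construction. You have correctly identified the main technical difficulty. One point worth sharpening: in the actual proof the bi-Lipschitz parametrization is not quite the endgame for the \emph{discrete} version; rather, one uses the approximating manifolds together with an excess/tilt estimate to bound $\mu(\B{r}{x})$ directly scale by scale, and the upward induction on radii is what converts the a-priori-circular estimate into a genuine one. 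Your ``smallest counterexample'' framing is morally equivalent but the original argument is organized as a clean upward induction on dyadic radii, which is also how the present paper later mimics the structure in Section~\ref{sec_cov}.
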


Condition \eqref{eq_reif_ass} prescribes some integral Carleson-type control over the quantity $D(x,r)$. If the measure $\mu$ is the $k$-dimensional Hausdorff measure restricted to some $S$, this bound is enough to guarantee also the rectifiability of $S$, as seen in the following theorem. Note that in \cite{AzzTol} the theorem is presented in a more general form.

\begin{theorem}[{\cite[Corollary 1.3]{AzzTol}}] \label{th_azzamtolsa}
 Given a Borel measurable subset $S$ of $\R^m$, let $\mu:=\cH^k\mres S$ be the $k$-dimensional Hausdorff measure restricted to $S$. The set $S$ is countably $k$-rectifiable if and only if 
 \begin{gather}
  \int_0^1 D^k_{\mu}(x,s)\frac{\mathrm{d}s}{s}<\infty\, \quad \mbox{for $\mu$-a.e. x }.
 \end{gather}
\end{theorem}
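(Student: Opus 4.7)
The statement is a two-sided characterization, so it splits into a necessity direction (rectifiability $\Rightarrow$ summable $\beta_2$) and a sufficiency direction (summable $\beta_2$ $\Rightarrow$ rectifiability). The first is the softer one and uses tangent-plane existence; the second is the hard direction, and this is where the Reifenberg-type machinery of \cite{naber-valtorta:harmonic} really earns its keep. My plan is to argue both directions after first reducing to a countable union of ``good'' subsets on which $\mu$ has upper and lower $k$-density bounds, using $\mu = \cH^k \mres S$ and the standard density-decomposition for $\cH^k$-measurable sets: off an $\cH^k$-null set, one can write $S = \bigcup_j S_j$ with $S_j$ bounded and $c_j^{-1} \leq \Theta^{*k}(\mu,x), \Theta_*^k(\mu,x) \leq c_j$ on $S_j$. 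The claim for $S$ follows from the claim for each $S_j$, so we work with one such piece.

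\textbf{Necessity.} If $S$ is $k$-rectifiable, then at $\mu$-a.e.\ $x$ the set $S$ admits an approximate tangent $k$-plane $V_x$, and $r^{-k}\mu \mres B_r(x)$ weak-$\star$ converges to $\cH^k \mres V_x$. One quantifies this convergence by noticing that $D^k_\mu(x,r) \to 0$ as $r \to 0$ at every tangent-plane point; the finer statement that $\int_0^1 D^k_\mu(x,s)\,ds/s < \infty$ for $\mu$-a.e.\ $x$ is the quantitative input. I would prove it by dominating $D^k_\mu(x,r)$ at $\mu$-a.e.\ $x$ by the square function
\[
S(x)^2 := \int_0^1 \inf_V r^{-k-2}\int_{B_r(x)} \dist(y,V)^2\, d\mu(y)\, \frac{dr}{r},
\]
and showing $\int S^2 \, d\mu < \infty$ on each Lipschitz graph piece of $S$ by a direct Fubini-plus-Lipschitz-graph estimate (the distance from a $C$-Lipschitz graph to its best affine approximant on $B_r$ is $O(r)$ in the tangent-point regime). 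Telescoping then gives the pointwise finiteness.

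\textbf{Sufficiency.} Fix a good piece $S_j$; I would perform a stopping-time/corona decomposition. Given a scale $r$ and $x \in S_j$, call $B_r(x)$ a \emph{good ball} if $D^k_\mu(y,r) < \eta$ for every $y \in B_r(x) \cap S_j$, and \emph{stopped} otherwise. Starting from a large ball, run a dyadic decomposition: on chains of nested good balls the best $k$-planes stay within controlled angle by a Campanato/telescoping estimate, and by the quantitative Reifenberg Theorem \ref{th_disc_reif} (or its measure-valued analogue) the corresponding portion of $\mu$ can be covered by a single Lipschitz graph. The stopped balls form a Carleson packing whose total $k$-measure is controlled by $\delta_R^{-2} \int_{B_2} \int_0^1 D^k_\mu(y,s)\, ds/s \, d\mu(y)$, which by hypothesis is $\mu$-a.e.\ finite and hence, after another density reduction, can be made $< \varepsilon$ on most of $S_j$. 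Iterating the construction on the stopping residue and exhausting the mass then yields a countable family of Lipschitz $k$-graphs that covers $\mu$-almost all of $S_j$, i.e.\ $k$-rectifiability.

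\textbf{Main obstacle.} The delicate step is the graph construction on a single good corona region: one must ensure the best planes $V(y,r)$ vary Lipschitz-continuously in $(y,r)$ along the region so that their limits as $r \downarrow 0$ glue into a single Lipschitz graph $\Gamma$, and then show $\mu$-almost every non-stopped point of $S_j$ lies on $\Gamma$. The key quantitative input is a tilt-excess estimate of the form $\mathrm{angle}(V(y,r),V(y,r/2))^2 \lesssim D^k_\mu(y,r) + D^k_\mu(y,r/2)$, which, summed via Cauchy--Schwarz against the Carleson bound, yields uniform Lipschitz control. Matching the measure to the graph at small scales then uses exactly the $\beta_2$-to-distance estimate that underlies Theorem \ref{th_disc_reif}, applied to the discrete subsets produced by Vitali covers of the non-stopped part.
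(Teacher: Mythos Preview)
The paper does not prove this theorem at all: it is explicitly quoted without proof from \cite{AzzTol} (see the opening of the Reifenberg section, where the authors write that ``the results will only be quoted without proof''). So there is no ``paper's own proof'' to compare your attempt against; the result is treated as a black box imported from Azzam--Tolsa.

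That said, a brief comment on your sketch. The high-level shape (density reduction, then a corona/stopping-time decomposition driven by smallness of $D^k_\mu$, with Reifenberg-type parametrization on each good tree) is indeed the genre of argument used in \cite{AzzTol} and in David--Semmes theory. But two points deserve caution. First, your density reduction assumes you can arrange \emph{lower} $k$-density bounds on the pieces $S_j$; for $\mu=\cH^k\mres S$ one has $2^{-k}\le \Theta^{*k}(\mu,x)\le 1$ $\mu$-a.e., but $\Theta_*^k$ can vanish on purely unrectifiable pieces, so you cannot assume lower Ahlfors regularity a priori --- the actual Azzam--Tolsa argument works with upper regularity only and handles the lack of lower bounds inside the stopping-time construction. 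Second, the tilt estimate you state, $\mathrm{angle}(V(y,r),V(y,r/2))^2 \lesssim D^k_\mu(y,r)+D^k_\mu(y,r/2)$, requires a lower mass bound on $\mu(B_r(y))$ to be meaningful (otherwise the best plane is badly nonunique), which loops back to the first issue. None of this is fatal to the strategy, but the real work in \cite{AzzTol} is precisely in managing these degeneracies, and your sketch underestimates that.
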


\section{Best approximating plane}
In this section, we record the main technical lemma needed for the final proof of Theorem \ref{th_main}.  Although several technical points need to be addressed, this lemma contains most of the important estimates in the paper and provides an estimate on the $D$ numbers using the normalized energy $\theta(x,r)$. 

The basic ideas behind the estimates in this section are similar to the ones in \cite[Theorem 7.1]{naber-valtorta:harmonic}, however the new definition of $(k,\epsilon)$-symmetries allows for more quantitative and easier proofs. 

For any $f\in W^{1,2}(\Omega, \A_{Q}(\N))$, and for all $\B r x \subseteq \Omega$, we introduce the following quantity 
\begin{gather}
 \P_f(x,r):= r^{-m}\int_{\B r x} \abs{Df(y) \cdot (y-x)}^2 \, \mathrm{d}y \, .
\end{gather}

Note that in the case $u$ is a Dirichlet minimizing $Q$-valued harmonic map we have by \eqref{eq:control_radial_derivative}:
\begin{gather}\label{eq_Pvstheta}
 \P_u(x,r)\leq C(m)\qua{\theta(x,2r)-\theta(x,r)}\, .
\end{gather}
However, here we carry out the estimates in a very general setting, and we will exploit this bound only at the very last step in our main proof.

\begin{theorem}\label{th_best_pinch}
Let $u\in W^{1,2}(\B 2 0 , \A_{Q}(\N))$, and fix $\epsilon>0$, $0<r\leq 1$ and some $x\in \B 1 0$. Let also $\mu$ be any positive Radon measure supported on $\B 1 0$. Assuming that 
\begin{gather}\label{eq_nablaV_lower_bound}
 \inf\cur{r^{2-m} \int_{\B r x} \abs{D_V u}^2 \, \colon \, \mbox{$V \subset \R^m$ linear with } \operatorname{dim}(V)=k+1} \geq \epsilon\, ,
\end{gather}
we conclude 
\begin{gather}\label{eq_best_pinch_1}
 D_\mu^k(x,r)\leq \frac{(m-k)(k+1)2^m}{\epsilon r^k} \int_{\B {r}{x}} \P_u(y,2r)\,\mathrm{d}\mu(y)\, .
\end{gather}
\end{theorem}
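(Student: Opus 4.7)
The plan is to combine a standard ``principal component analysis'' of the measure $\mu$ restricted to $B_r(x)$ with the identity $(z-y) = (z-x_*) - (y-x_*)$ at the center of mass $x_*$, and then to exploit the hypothesis \eqref{eq_nablaV_lower_bound} via a pigeonhole selection.

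\textbf{Step 1 (choice of best plane via covariance).} Set $M := \mu(B_r(x))$ (WLOG positive), $x_* := M^{-1}\int_{B_r(x)} y\,\mathrm{d}\mu(y)$ and consider the symmetric positive-semidefinite matrix
\[
\tilde{B}_{ij} := \int_{B_r(x)} (y-x_*)_i (y-x_*)_j \, \mathrm{d}\mu(y),
\]
with eigenvalues $\tilde{\lambda}_1 \geq \cdots \geq \tilde{\lambda}_m \geq 0$ and orthonormal eigenbasis $\tilde{v}_1,\dots,\tilde{v}_m$. A standard computation shows that the affine $k$-plane $V_* = x_* + \mathrm{span}(\tilde v_1,\dots,\tilde v_k)$ is optimal for $D^k_\mu(x,r)$, and
\[
D^k_\mu(x,r) \leq \frac{1}{r^{k+2}} \sum_{i>k} \tilde{\lambda}_i \leq \frac{(m-k)\,\tilde{\lambda}_{k+1}}{r^{k+2}}.
\]
So it suffices to bound $\tilde{\lambda}_{k+1}$ from above.

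\textbf{Step 2 (key $L^2$ identity at the center of mass).} For each sheet $\ell$, write $(z-y) = (z-x_*) - (y-x_*)$. Expanding $|Du_\ell(z)\cdot(z-y)|^2$, integrating in $d\mu(y)$, and using the vanishing first moment $\int (y-x_*)\,\mathrm{d}\mu = 0$ kills the cross term and yields
\[
\int_{B_r(x)} |Du(z)\cdot(z-y)|^2 \, \mathrm{d}\mu(y) = M\,|Du(z)\cdot(z-x_*)|^2 + \sum_{i=1}^{m} \tilde{\lambda}_i \, |D_{\tilde{v}_i} u(z)|^2,
\]
where the last term comes from $\int|Du_\ell(z)(y-x_*)|^2 d\mu = \mathrm{tr}(Du_\ell(z)^T Du_\ell(z)\,\tilde{B})$ evaluated in the eigenbasis of $\tilde B$. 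Integrate in $z \in B_r(x)$ and drop the non-negative first term.

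\textbf{Step 3 (control by $\P_u$).} Since $y\in B_r(x)$ implies $B_r(x)\subset B_{2r}(y)$, one has
\[
\int_{B_r(x)} |Du(z)\cdot(z-y)|^2\,\mathrm{d}z \leq \int_{B_{2r}(y)} |Du(z)\cdot(z-y)|^2\,\mathrm{d}z = (2r)^m\,\P_u(y,2r).
\]
Combined with Step 2 this gives the fundamental inequality
\[
\sum_{i=1}^{m}\tilde{\lambda}_i \int_{B_r(x)}|D_{\tilde v_i}u|^2\,\mathrm{d}z \leq 2^m r^m \int_{B_r(x)}\P_u(y,2r)\,\mathrm{d}\mu(y).
\]

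\textbf{Step 4 (pigeonhole using \eqref{eq_nablaV_lower_bound}).} Apply \eqref{eq_nablaV_lower_bound} to the $(k+1)$-dimensional subspace $V^{\sharp} := \mathrm{span}(\tilde v_1,\dots,\tilde v_{k+1})$ to get
\[
\sum_{i=1}^{k+1}\int_{B_r(x)}|D_{\tilde v_i}u|^2 \geq \epsilon\, r^{m-2},
\]
so by pigeonhole there exists $i^\star\in\{1,\dots,k+1\}$ with $\int|D_{\tilde v_{i^\star}}u|^2 \geq \epsilon r^{m-2}/(k+1)$. Since $\tilde{\lambda}_{k+1}\leq \tilde{\lambda}_{i^\star}$, one term on the left of the fundamental inequality dominates:
\[
\tilde{\lambda}_{k+1}\cdot \frac{\epsilon\, r^{m-2}}{k+1} \leq \tilde{\lambda}_{i^\star}\int|D_{\tilde v_{i^\star}}u|^2 \leq 2^m r^m \int_{B_r(x)}\P_u(y,2r)\,\mathrm{d}\mu(y).
\]
Solving for $\tilde{\lambda}_{k+1}$ and plugging into Step 1 produces exactly \eqref{eq_best_pinch_1}.

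\textbf{Main obstacle.} The only technical issue I expect is verifying the $Q$-valued identity of Step 2: the argument requires handling $Du(z)$ as a $Q$-tuple $(Du_\ell(z))_\ell$ of linear maps, checking that $\int|Du_\ell(z)(y-x_*)|^2\,\mathrm{d}\mu = \mathrm{tr}(Du_\ell(z)^T Du_\ell(z)\,\tilde B)$, and then recognizing the sum over $\ell$ of $v_i^T Du_\ell^T Du_\ell v_i$ as $|D_{v_i}u(z)|^2$. Once this is set up correctly, the rest of the argument is linear algebra and Fubini.
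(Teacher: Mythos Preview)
Your proof is correct and follows essentially the same route as the paper: the same PCA/covariance setup for $\mu\mres B_r(x)$, the same use of the center of mass to kill cross terms, the same enlargement $B_r(x)\subset B_{2r}(y)$ to reach $\P_u(y,2r)$, and the same use of hypothesis \eqref{eq_nablaV_lower_bound} on $V=\mathrm{span}(\tilde v_1,\dots,\tilde v_{k+1})$ to extract $\tilde\lambda_{k+1}$. The only cosmetic differences are that the paper obtains the key inequality eigenvector-by-eigenvector via the eigenvalue equation and Cauchy--Schwarz (its Lemma~\ref{lemma_best_V}) and then sums over $j\le k+1$, whereas you expand $|Du(z)\cdot(z-y)|^2$ directly to get the full identity $\sum_i \tilde\lambda_i\int|D_{\tilde v_i}u|^2 \le 2^m r^m\int \P_u\,d\mu$ at once and then pigeonhole; both give the same constant $(m-k)(k+1)2^m/\epsilon$.
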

\begin{remark}
We remark that \eqref{eq_best_pinch_1} does not change if $\mu$ is multiplied by a positive constant, thus for convenience for the rest of this section we are going to assume without loss of generality that $\mu$ is a probability measure. Moreover, we can also assume without loss of generality that $x=0$ and $r=1$.
\end{remark}

Note that for this theorem we will not exploit any property specific to Dirichlet-minimizers. For future convenience, we record a simple corollary that rephrases the previous theorem with the language of Dirichlet-minimizers and quantitative stratification.
\begin{corollary}\label{cor_best_pinch}
Under Assumption \ref{ass:harmonic}, fix $\epsilon>0$, $0<r\leq 1$ and some $x\in \B 1 0$. Let also $\mu$ be any positive Radon measure supported on $\B 1 0$. Assuming that $\B r x$ is $(k,\epsilon)$-symmetric but NOT $(k+1,\epsilon)$-symmetric, we conclude 
\begin{gather}\label{eq_best_pinch}
 D_\mu^k(x,r)\leq \frac{C(m)}{\epsilon r^k} \int_{\B {r}{x}} \qua{\theta(y,4r)-\theta(y,2r)}\,\mathrm{d}\mu(y)\, .
\end{gather}
\end{corollary}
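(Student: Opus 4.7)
The plan is to observe that Corollary \ref{cor_best_pinch} is essentially an immediate consequence of the more general Theorem \ref{th_best_pinch}, once the quantitative-stratification hypothesis is translated into the coercivity assumption \eqref{eq_nablaV_lower_bound}, combined with the energy-monotonicity bound \eqref{eq_Pvstheta}.

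First I would unpack the assumption that $B_r(x)$ is $(k,\e)$-symmetric but not $(k+1,\e)$-symmetric. By Definition \ref{def:quant_sym}, being $(k,\e)$-symmetric means that both
\[
\theta(x,2r) - \theta(x,r) < \e \qquad \text{and} \qquad \exists\, L^k \text{ with } r^{2-m}\int_{B_r(x)} \abs{D_L u}^2 \le \e.
\]
Since condition (a) on the pinching is independent of the dimension of the invariant subspace, it is automatically inherited at dimension $k+1$. Hence the failure of $(k+1,\e)$-symmetry must come from the failure of condition (b), namely
\[
\inf\left\{ r^{2-m}\int_{B_r(x)}\abs{D_V u}^2 \,\colon\, V\subset\R^m \text{ linear with } \dim(V) = k+1\right\} \ge \e,
\]
which is precisely the hypothesis \eqref{eq_nablaV_lower_bound} of Theorem \ref{th_best_pinch}.

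Having established \eqref{eq_nablaV_lower_bound}, I would simply invoke Theorem \ref{th_best_pinch} to obtain
\[
D_\mu^k(x,r) \le \frac{(m-k)(k+1)2^m}{\e\, r^k} \int_{B_r(x)} \P_u(y,2r)\,\mathrm{d}\mu(y).
\]
Then, since $u$ satisfies Assumption \ref{ass:harmonic} (in particular it is energy minimizing, hence stationary), the pointwise bound \eqref{eq_Pvstheta} applies at every $y$ with $B_{2(2r)}(y) \subset B_{10}(0)$; this is guaranteed because $y \in B_r(x) \subset B_1(0)$ and $r \le 1$, so $B_{4r}(y) \subset B_{6}(0) \subset B_{10}(0)$. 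Plugging the estimate $\P_u(y, 2r) \le C(m)\,[\theta(y,4r) - \theta(y,2r)]$ into the previous inequality and absorbing the dimensional constant $(m-k)(k+1)2^m \cdot C(m)$ into a single $C(m)$ yields
\[
D_\mu^k(x,r) \le \frac{C(m)}{\e\, r^k} \int_{B_r(x)} \big[\theta(y,4r)-\theta(y,2r)\big]\,\mathrm{d}\mu(y),
\]
which is \eqref{eq_best_pinch}. Since all the heavy lifting has been done in Theorem \ref{th_best_pinch} and in the monotonicity analysis producing \eqref{eq_Pvstheta}, no genuine obstacle remains; the only point requiring minor care is the logical translation of the "not $(k+1,\e)$-symmetric" hypothesis, which is why one needs $(k,\e)$-symmetry (guaranteeing condition (a)) in order to force the failure of $(k+1,\e)$-symmetry to fall on the $V$-derivative condition (b).
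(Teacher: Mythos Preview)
Your proof is correct and follows exactly the approach the paper intends: the paper's own proof is the one-liner ``The proof follows immediately from the definition of $(k+1,\epsilon)$-symmetric and the bound in \eqref{eq_Pvstheta},'' and you have simply unpacked those two ingredients carefully. In particular, your observation that condition (a) is dimension-independent---so that the failure of $(k+1,\e)$-symmetry must come from (b), yielding \eqref{eq_nablaV_lower_bound}---is precisely the logical point the paper is gesturing at.
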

\begin{proof}
 The proof follows immediately from the definition of $(k+1,\epsilon)$-symmetric and the bound in \eqref{eq_Pvstheta}.
\end{proof}

\subsection{Properties of the best approximating plane}
For fixed $k$, and given any probability measure $\mu$, for all $(x,r)$ we set $V(x,r)$ to be the $k$-dimensional affine subspace minimizing
\begin{gather}
 \int_{\B r x} \dist^{2}(y,V)\, \mathrm{d}\mu(y)\,,
\end{gather}
so that, in particular, 
\begin{gather}
D^{k}_{\mu}(x,r) = r^{-(k+2)} \int_{B_{r}(x)} \dist^{2}(y, V(x,r))\, \mathrm{d}\mu(y)\,.
\end{gather}
Since in this section we focus on $x=0$ and $r=1$, we will in fact mostly consider only the $k$-dimensional subspace $V(0,1)$.

First of all, note that necessarily $V(x,r)$ will pass through the center of mass of $\mu$ in $B_{r}(x)$, defined as
\begin{gather}
 x_m(\mu,x,r)=x_m := \int_{\B r x} x\, \mathrm{d}\mu(x)\, .
\end{gather}

It will be convenient to phrase some of the estimates needed for theorem \ref{th_best_pinch} in terms of a suitable quadratic form on $\R^m$, defined as
\begin{gather}
 R(w) :=  \int_{\B 1 0} \abs{\ps{x-x_{m}}{w}}^2\, \mathrm{d}\mu(x)\, .
\end{gather}
By standard linear algebra, there exists an orthonormal basis $\cur{e_1,\cdots, e_m}$ of eigenvectors for $R$ with non-negative eigenvalues $\lambda_1,\cdots,\lambda_m$, which we will take for convenience in decreasing order. Note that by the variational characterization of $\lambda_k$ we have that
\begin{gather}
  e_k\in \operatorname{argmax}\cur{ \int_{B_{1}(0)} \abs{\ps{x-x_{m}}{e}}^2\, \mathrm{d}\mu(x) \ \ s.t. \ \ \abs{e}^2=1 \ \ \text{and} \ \ \ps{e}{e_i}=0 \ \ \forall i\leq k   }\, ,\\
 \lambda_{k}=\int_{B_{1}(0)} \abs{\ps{x-x_{m}}{e_k}}^2\, \mathrm{d}\mu(x)\, , \label{eq_lambda_k}
\end{gather}
and so
\begin{gather}\label{eq_beta_V}
 D^k_\mu (0,1) = \int_{\B 1 0} \dist^2(x,V(0,1))\,\mathrm{d}\mu(x) = \sum_{i=k+1}^m \lambda_i\, .
\end{gather}
Indeed, by minimality of $V$, $V(0,1)=x_m + \operatorname{span}\left[e_1,\cdots,e_k\right]$, and thus
\begin{gather}
 \int_{\B 1 0} \dist^2(x,V(0,1))\,\mathrm{d}\mu(x) = \sum_{i=k+1}^m \int_{\B 1 0} \abs{\ps{x-x_{m}}{e_i}}^2\, \mathrm{d}\mu(x) =\sum_{i=k+1}^m \lambda_i\, .
\end{gather}

Using simple geometry, it is possible to prove that for \textit{any} map $f\in W^{1,2}$ we have the following estimate involving $\lambda_k$ and $\P_f$. 
\begin{lemma}\label{lemma_best_V}
Let $f = \sum_{\ell=1}^{Q} \llbracket f_{\ell} \rrbracket  \in W^{1,2}(\B {3r} x, \A_{Q}(\N))$, and let $\mu$ be a probability measure on $\B r x$. Then
\begin{align}
\lambda_k \int_{\B r x} \abs{Df(z) \cdot e_{k}}^2\,\mathrm{d}z \leq 2^m\int_{\B r x} \P_f(y,2r)\, \mathrm{d}\mu(y)\, \quad \mbox{for every } k=1,\dots,m\,.
\end{align}
\end{lemma}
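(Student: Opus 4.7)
The plan is as follows. Since both sides of the claimed inequality scale covariantly under the rescaling $z \mapsto x + r z$, I may assume without loss of generality that $x=0$ and $r=1$, so that $\mu$ is a probability measure supported in $B_1(0)$ and $f$ is defined on $B_3(0)$. The key idea is to encode the squared differential of $f$ as a PSD matrix-valued quadratic form on $\R^m$ and to exploit the variational characterization of $\lambda_k$ by identifying it with the spectrum of the covariance matrix of $\mu$.

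For each $z \in B_3(0)$, define
\[
A(z) := \sum_{\ell=1}^{Q} Df_\ell(z)^T Df_\ell(z) \in \mathrm{Sym}_m^+(\R),
\]
so that $|Df(z) \cdot w|^2 = \sum_\ell |Df_\ell(z) \cdot w|^2 = w^T A(z) w$ for every $w \in \R^m$; in particular $\P_f(y,2) = 2^{-m} \int_{B_2(y)} (z-y)^T A(z) (z-y)\,dz$. Let $C \in \mathrm{Sym}_m^+(\R)$ denote the covariance matrix of $\mu$, $C_{ij} = \int (y - x_m)_i (y - x_m)_j\, d\mu(y)$. By the very definition of $R$, the orthonormal basis $\{e_j\}_{j=1}^m$ diagonalizes $C$ with eigenvalues $\lambda_j \geq 0$.

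I then apply Fubini together with the inclusion $B_1(0) \subset B_2(y)$, valid for every $y \in B_1(0)$, to get
\[
\int_{B_1(0)} \P_f(y,2)\, d\mu(y) \geq 2^{-m} \int_{B_1(0)} \Bigl( \int_{B_1(0)} (z-y)^T A(z) (z-y)\, d\mu(y) \Bigr) dz.
\]
Decomposing $z-y = (z-x_m) - (y-x_m)$ and using that $\int (y - x_m)\, d\mu(y) = 0$ (because $\mu$ is a probability measure), the inner integral simplifies to
\[
(z - x_m)^T A(z) (z - x_m) + \mathrm{tr}\bigl(A(z) C\bigr).
\]
Diagonalizing $C$ in the basis $\{e_j\}$ yields
\[
\mathrm{tr}\bigl(A(z) C\bigr) = \sum_{j=1}^m \lambda_j\, e_j^T A(z) e_j \geq \lambda_k\, e_k^T A(z) e_k = \lambda_k\, |Df(z) \cdot e_k|^2,
\]
where the inequality just drops the non-negative terms $\lambda_j\, e_j^T A(z) e_j \geq 0$. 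Dropping also the non-negative term $(z - x_m)^T A(z) (z - x_m)$ and integrating over $z \in B_1(0)$ gives the desired estimate.

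There is no substantial obstacle in this argument; the conceptual crux is simply the identification of $C$ with the matrix of the form $R$, which converts the spectral data $\lambda_k$ into a pointwise comparison between $|Df \cdot e_k|^2$ and the radial integrals of $|Df|^2$ weighted by $\mu$. The only minor care needed is to check that $|Df(z) \cdot w|^2$ is indeed a PSD quadratic form in $w$, which follows directly from the definition of $\G$ against $Q\llbracket 0 \rrbracket$.
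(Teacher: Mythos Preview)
Your proof is correct and follows essentially the same strategy as the paper: reduce to $x=0$, $r=1$, use the inclusion $B_1(0)\subset B_2(y)$ for $y\in B_1(0)$, and exploit the eigendecomposition of the covariance of $\mu$ to obtain the pointwise bound $\lambda_k\,|Df(z)\cdot e_k|^2 \le \int |Df(z)\cdot(y-z)|^2\,d\mu(y)$, then integrate in $z$.

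The only difference is in packaging: the paper derives the pointwise bound by writing the eigenvector identity $\lambda_k\,Df_\ell(z)\cdot e_k = \int \langle y-x_m,e_k\rangle\,Df_\ell(z)\cdot(y-z)\,d\mu(y)$ and applying Cauchy--Schwarz in $L^2(\mu)$, whereas you expand $(z-y)^T A(z)(z-y)$ directly, identify the cross term as vanishing by the center-of-mass condition, and read off $\mathrm{tr}(A(z)C)=\sum_j \lambda_j\,e_j^T A(z)e_j\ge \lambda_k\,|Df(z)\cdot e_k|^2$. Your route is arguably a touch cleaner (no Cauchy--Schwarz, and it makes transparent that the estimate holds simultaneously for all $k$ with the \emph{same} right-hand side $\mathrm{tr}(A(z)C)$), but the two arguments are really the same computation written in different notation.
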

\begin{proof}
For simplicity, we assume $x=0$ and $r=1$. Moreover, note that evidently we can assume $\lambda_k>0$, otherwise there is nothing to prove. Fix some $z\in \B 1 0$. By definition of eigenvectors $e_k$, we have for every $\ell \in \{ 1,\dots,Q\}$ that
\begin{gather}
 \int_{\B 1 0} \ps{x-x_m}{e_k} \left( Df_{\ell}(z) \cdot (x - x_{m}) \right)  \, \mathrm{d}\mu(x) = \lambda_k Df_{\ell}(z) \cdot e_{k} \, .
\end{gather}
By definition of center of mass, we can write
\begin{gather}
 \int_{\B 1 0}  \ps{x-x_m}{e_k}(z-x_m)\ \mathrm{d}\mu(x) = 0\, ,  
\end{gather}
and so 
\begin{gather}
\lambda_k Df_{\ell}(z) \cdot e_{k} = \int_{\B 1 0} \ps{x-x_m}{e_k} \left( Df_{\ell}(z) \cdot (x-z) \right)  \,\mathrm{d}\mu(x) \, .
\end{gather}
By Cauchy-Schwartz and by \eqref{eq_lambda_k}, we have
\begin{gather}
 \lambda_k^2\abs{Df_{\ell}(z) \cdot e_{k}}^2 \leq \lambda_k\int \abs{Df_{\ell}(z) \cdot (x-z)}^2\,\mathrm{d}\mu(x) \, ,
\end{gather}
and thus, summing over $\ell$, 
\begin{gather}
\lambda_{k} \abs{Df(z) \cdot e_{k}}^{2} \leq \int \abs{Df(z) \cdot (x-z)}^{2} \, \mathrm{d}\mu(x)\,.
\end{gather}

Taking the integral of this inequality in $\B 1 0$ with respect to the volume measure in $z$, we obtain the estimate 
\begin{equation}\label{eq_k_vs_P}
\begin{split}
\lambda_k &\int_{\B 1 0}\abs{Df(z) \cdot e_{k}}^2\,\mathrm{d}z\leq  \iint_{\B 1 0 \times \B 1 0} \abs{Df(z) \cdot (x-z)}^2\,\mathrm{d}z\,\mathrm{d}\mu(x) \\
&\leq \int_{\B 1 0} \int_{\B 2 x} \abs{Df(z) \cdot (x-z)}^2\,\mathrm{d}z\,\mathrm{d}\mu(x)\leq 2^m\int_{\B 1 0} \P_f(x,2)\ \mathrm{d}\mu(x)\, .
\end{split}
\end{equation}
\end{proof}

From this proposition, the proof of Theorem \ref{th_best_pinch} follows as a simple corollary.
\begin{proof}[Proof of theorem \ref{th_best_pinch}]
 As before, we assume without loss of generality that $x=0$ and $r=1$. Moreover, by \eqref{eq_beta_V} it is sufficient to prove that
 \begin{gather}
  \lambda_{k+1}\leq \frac{C(m)}{\epsilon} \int_{\B {1}{0}} \P_u(y,2)\,\mathrm{d}\mu(y)\, .  
 \end{gather}
By the previous lemma, we have
\begin{gather}
 \lambda_{k+1} \sum_{j=1}^{k+1} \int_{\B 1 0} \abs{Du \cdot e_{j}}^2\leq \sum_{j=1}^{k+1} \lambda_j \int_{\B 1 0} \abs{Du \cdot e_{j}}^2 \leq C(m) \int_{\B 1 0} \P_u(x,2)\, \mathrm{d}\mu(x)\, .
\end{gather}
By the lower bound in \eqref{eq_nablaV_lower_bound}, we must have
\begin{gather}
 \sum_{j=1}^{k+1} \int_{\B 1 0} \abs{Du \cdot e_{j}}^2\geq \epsilon\, ,
\end{gather}
and this concludes the proof.
\end{proof}

\section{Proof of the main theorem via covering arguments}\label{sec_cov}
This section is dedicated to the proof of the Theorem \ref{th_main}. We split it into two pieces, one containing the uniform Minkowski bounds and one with the rectifiability part. Once the Minkowski bounds are obtained, the rectifiability is almost an immediate corollary.

The Minkowski bounds will be obtained with a covering argument similar to the one in \cite{NV16}. 

\begin{proposition}\label{prop_cover}
There exist a small constant $\delta=\delta(m,\N,Q,\Lambda,\epsilon)>0$ and $C_{III}(m)$ such that the following holds. Let $u$ satisfy assumption \ref{ass:harmonic}, let $\epsilon>0$, $p \in B_{1}(0)$, and $0<r\leq R\, , \ 0<R\leq 1$ be chosen in an arbitrary fashion. For any subset $\cS \subseteq \cS^{k}_{\e,\delta r}(u)$, setting $E = \sup_{x\in \B {2R} p \cap \cS} \theta(x,3R)$, there exists a covering
\begin{gather}
 \cS\cap \B R p  \subseteq \bigcup_{x\in \cD} \B {r_x}{x}\, , \quad \text{ with } \ \ r_x\geq r \ \ \text{ and}\ \ \sum_{x\in \cD} r_x^k\leq 2 C_{III}(m) R^k\, .
\end{gather}
Moreover, for all $x\in \cD$, either $r_x=r$, or for all $y\in \B {2r_x}{x}$:
  \begin{gather}\label{eq_Edrop}
   \theta(y,3r_x) \leq E- \delta\, .
  \end{gather}
\end{proposition}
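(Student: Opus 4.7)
The plan is to build the covering by iterating a dichotomy over dyadic scales and to control the resulting sum $\sum r_x^k$ via the discrete Reifenberg theorem (Theorem \ref{th_disc_reif}). Fix a small parameter $\eta = \eta(m) \in (0, 1/10)$ and consider the dyadic scales $s_i := \eta^i R$. Starting from the single ball $B_R(p)$, at every step each current ball will either be \emph{stopped} and added to the final covering, or \emph{refined} into a controlled number of smaller balls at the next scale. The iteration is terminated as soon as $s_i \leq r$, and all remaining balls are assigned radius $r_x = r$. The parameter $\delta$ will be chosen small enough (depending on $m, \N, Q, \Lambda, \epsilon$) to make both the stopping alternative and the Reifenberg Carleson condition consistent.

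The dichotomy at a bad ball $B_{s}(x)$ with $x \in \cS$ is the following. If there exists $y \in B_{2s}(x) \cap \cS$ with $\theta(y, 3s) \leq E - \delta$, then the ball is stopped: by the monotonicity of $\theta$ one can upgrade the single-point energy drop to \eqref{eq_Edrop} for all $y \in B_{2r_x}(x)$ at the cost of adjusting $\delta$ by a dimensional factor. Otherwise, we are in the \emph{pinched regime} where every $y \in B_{2s}(x) \cap \cS$ satisfies $\theta(y, 3s) > E - \delta$. In this regime we claim that the pinched set cannot $(2\rho s)$-effectively span a $(k+1)$-dimensional affine subspace $L$ for a suitable $\rho = \rho(\epsilon)$. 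Indeed, otherwise Proposition \ref{prop_unipinch} would force $\theta$ to be almost constant on $L$ in $B_{s}(x)$, hence by Lemma \ref{lem:pinching_control1} and \eqref{eq:monotonicity identity-integrated lower bound} the ball $B_{s}(x)$ would be $(k+1, \epsilon)$-symmetric, contradicting $x \in \cS^k_{\epsilon, \delta r}(u)$ since $s \geq \delta r$. Therefore the pinched set effectively spans at most a $k$-dimensional subspace $L_{x,s}$, and Theorem \ref{th:quantitative_eps_reg} then confines $\cS \cap B_{s/2}(x)$ to the tube $B_{\rho s}(L_{x,s})$. This tube is covered by $\lesssim_m \eta^{-k}$ balls of radius $s_{i+1} = \eta s$ centered at points of $\cS$, which form the refinement.

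The crucial estimate is the control of the total $k$-content via discrete Reifenberg. Associate to the finally-constructed covering $\cD$ the measure $\mu := \sum_{x \in \cD} r_x^k \delta_x$. At every center $y$ in (any ancestor of) the covering and every admissible scale $s$, either $B_s(y)$ is not $(k+1,\epsilon)$-symmetric (this is the relevant case in the pinched regime by the argument above), so that Corollary \ref{cor_best_pinch} gives
\[
D^k_\mu(y, s) \leq \frac{C(m)}{\epsilon\, s^k} \int_{B_s(y)} \bigl[\theta(z, 4s) - \theta(z, 2s)\bigr] \, \mathrm{d}\mu(z),
\]
or $B_s(y)$ already falls into a stopped region whose total $k$-content is under control. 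Integrating in $s$ and swapping the order of integration, the crucial telescoping property of the dyadic energy differences yields
\[
\int_{B_\tau(y)} \int_0^\tau D^k_\mu(z, s) \, \frac{\mathrm{d}s}{s} \, \mathrm{d}\mu(z) \;\leq\; \frac{C(m)}{\epsilon} \int_{B_{2\tau}(y)} \bigl[\theta(z, 8\tau) - \Theta_u(z)\bigr] \, \mathrm{d}\mu(z) \;\leq\; \frac{C(m) E}{\epsilon} \, \tau^k
\]
provided we have an \emph{a priori} Ahlfors upper bound $\mu(B_{2\tau}(y)) \leq C_A \tau^k$, which is obtained inductively as part of the construction. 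Choosing $\delta$ small enough (relative to $\delta_R$ in Theorem \ref{th_disc_reif} and to $\epsilon, E$) verifies the Carleson hypothesis \eqref{eq_reif_ass} at every scale, and Theorem \ref{th_disc_reif} returns $\sum_{x \in \cD} r_x^k \leq C_R(m) R^k$.

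The main obstacle will be to propagate the \textit{a priori} Ahlfors bound $\mu(B_\tau(y)) \leq C_A \tau^k$ along the construction: one cannot apply the Reifenberg theorem \emph{after} the construction is finished without knowing that the packing hypothesis \eqref{eq_reif_ass} holds uniformly. The standard workaround, which I expect to implement here, is to carry out the construction and the Reifenberg estimate \emph{simultaneously}, scale by scale: at each generation one uses Theorem \ref{th_disc_reif} on the measure built from the balls of that generation to propagate the Ahlfors bound to the next. A further subtle point is dealing with balls stopped at the minimal scale $r$, which must be packed into the same measure without spoiling the Carleson sum; here the hypothesis $x \in \cS^k_{\epsilon, \delta r}(u)$, rather than the weaker $\cS^k_\epsilon(u)$, provides exactly the scale cut-off $s \geq \delta r$ needed to keep the Reifenberg estimate valid down to the bottom scale.
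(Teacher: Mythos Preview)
Your outline captures the right ingredients (pinching, tube confinement, discrete Reifenberg with telescoping), but there are two concrete gaps that prevent the argument from closing.

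\textbf{First, the stopping criterion is backwards.} You stop a ball $B_s(x)$ as soon as \emph{some} $y\in B_{2s}(x)\cap\cS$ has $\theta(y,3s)\le E-\delta$, and then claim monotonicity upgrades this to \eqref{eq_Edrop} for \emph{all} $y$. Monotonicity of $\theta$ is in the radius, not in the center; a single low-energy point tells you nothing about its neighbors. In fact the dichotomy you need is the opposite one: either every $y$ already satisfies \eqref{eq_Edrop} (then stop), or some $y$ has $\theta(y,3s)>E-\delta$ (then use that high-energy point). But a single high-energy point is not enough to run Lemma~\ref{lem:pinching_control1} and confine $\cS$ to a tube; you need $k+1$ of them in general position. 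The paper resolves this by \emph{not} demanding uniform energy drop at the inner stage: Proposition~\ref{prop_cover_sec} instead stops a ball when the set $\cW_{x,r_x}$ of high-energy points is confined to a $(k-1)$-dimensional tube. Then, in the outer iteration, sub-balls away from this tube automatically satisfy \eqref{eq_Edrop}, while sub-balls near the $(k-1)$-plane are few enough (their $k$-content shrinks by a factor $\rho$) to be recycled.

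\textbf{Second, your Carleson bound is not small.} You end with
\[
\int_{B_\tau(y)}\int_0^\tau D^k_\mu(z,s)\,\frac{\mathrm ds}{s}\,\mathrm d\mu(z)\le \frac{C(m)E}{\epsilon}\,\tau^k,
\]
and then say ``choosing $\delta$ small enough verifies \eqref{eq_reif_ass}''. But the right-hand side does not depend on $\delta$; it is of order $E/\epsilon$, which has no reason to be below $\delta_R^2$. The paper fixes this by arranging that every center $x\in\cD$ of the inner covering satisfies $\theta(x,r_x)\ge E-\eta$ (property (5) in the construction of Proposition~\ref{prop_cover_sec}); the telescoping integral then gives $\int_0^t \Pinch_\cD(x,10s)\,\frac{\mathrm ds}{s}\le c\eta$ rather than $cE$, and $\eta$ is the parameter one takes small. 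Your construction does not guarantee that the covering centers themselves have uniformly high energy, so you cannot get the $\eta$-smallness the Reifenberg hypothesis requires.

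Both issues are symptoms of trying to collapse the paper's two-level scheme (Proposition~\ref{prop_cover_sec} for packing via Reifenberg, then Proposition~\ref{prop_cover} for the energy-drop alternative via the $(k-1)$-tube) into a single pass. If you want a one-pass argument, you must at minimum (i) place your covering centers on the high-energy set so that the telescoped pinching is $O(\eta)$, and (ii) separate, at each refinement, the sub-balls meeting the high-energy set (which continue) from those that do not (which then genuinely satisfy \eqref{eq_Edrop}).
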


\subsection{Proof of the main theorem \ref{th_main}}
Before we move to the proof of the proposition, we use it to prove the main theorem. This proof is basically a corollary of the covering proposition \ref{prop_cover}. We will use this proposition inductively to produce a family of coverings of $\cS=\cS^k_{\epsilon,\delta r}(u)\cap \B 1 0$ indexed by a parameter $i \in \mathbb{N}$ of the form
\begin{gather}\label{eq_final_cov_1}
 \cS\subseteq \bigcup_{x\in \cD^i} \B {r_x}{x}\, , \quad \sum_{x\in \cD^i} r_x^k \leq (c(m)C_F(m))^i\, .
\end{gather}
Moreover, if $E=\sup_{x\in \cS^{k}_{\e,\delta r}(u)\cap \B 2 0}\theta(x,3)$, we have for all $i$ 
\begin{gather}\label{eq_final_cov_2}
 r_x \leq r \quad \text{ or } \quad \forall y\in \cS \cap \B {2r_x}{x}\, , \ \ \theta(y,3r_x)\leq E- i \delta\, .
\end{gather}
Evidently, for $i\geq \lfloor E/\delta\rfloor +1$, the second condition cannot be verified, and so all the radii in the covering are going to be equal to $r$. As a consequence, we have the Minkowski bound 
\begin{gather}
 \Vol\ton{\B r {\cS^k_{\epsilon,\delta r}(u)}\cap \B 1 0}\leq (c(m)C_F(m))^{\lfloor \delta^{-1} E \rfloor +1}r^{m-k}\, .
\end{gather}
Since $\delta=\delta(m,\Lambda)$, it is clear that, up to enlarging the constant in the estimate, the same bound holds also for $\cS^k_{\epsilon,r}(u)$ in the place of $\cS^k_{\epsilon,\delta r}(u)$, and this concludes the proof of the Minkowski bounds in \eqref{eq:mink_bounds}.

\vspace{5mm}
In order to produce the covering in \eqref{eq_final_cov_1}, we will apply inductively the covering proposition \ref{prop_cover}. For $i=1$, 
we can apply this proposition to $\B 1 0$ and obtain the desired covering. Inductively, consider all the balls $\cur{\B {r_x}{x}}_{x\in \cD^i}$ and apply proposition \ref{prop_cover} to these balls. For each $x\in \cD^i$, we obtain a covering of the form
\begin{gather}
 \cS\cap \B {r_x}{x} \subseteq \bigcup_{y\in \cD_x} \B {r_y}{y}\, , \quad \sum_{y\in \cD_x} r_y^k \leq 2 C_{III}(m) r_x^k\, ,\\
 r_y \leq r \quad \text{ or } \quad \forall z\in \cS \cap \B {2r_y}{y}\, , \ \ \theta(z,3r_y)\leq E- (i+1) \delta\, .
\end{gather}
Set 
\begin{gather}
 \cD^{i+1}= \bigcup_{x\in \cD^i} \cD_{x}\, ,
\end{gather}
and the induction step is completed.

\subsubsection{Proof of the rectifiability of \texorpdfstring{$\cS^k_\epsilon$}{the quantitative strata}}
As for the rectifiability, this is going to be a corollary of Theorem \ref{th_azzamtolsa}, the uniform Minkowski bound \eqref{eq:mink_bounds} and the approximation theorem \ref{th_best_pinch}. 

In particular, let $\mu=\cH^k\mres \cur{\cS^k_\epsilon(u)\cap \B 1 0}$. From \eqref{eq:mink_bounds} we deduce that this measure is finite, as
\[
\mu(B_{1}(0)) \leq C(m,\Lambda,\e).
\]
In turn, by scaling this implies that for all $x \in B_{1}(0)$ and $r > 0$
\begin{gather}
 \mu \ton{\B r x}\leq C(m,\Lambda,\epsilon) r^k\, ,
\end{gather}
and thus $\mu$ is Ahlfors upper $k$-regular.

Now by the best approximation theorem \ref{th_best_pinch} and a simple change of variables we can write
\begin{equation}
\begin{split}
 \int _{\B 1 0} \int_0^1 D^k_{\mu} (x,r) \frac{\mathrm{d}r}{r} \mathrm{d}\mu(x)&\leq C(m,\epsilon) \int_0^1 \int _{\B 1 0} r^{-k} \int_{\B {r}{x}} \P_u(y,2r)\,\mathrm{d}\mu(y) \mathrm{d}\mu(x) \frac{\mathrm{d}r}{r} \\
& \leq C(m,\epsilon) \int_0^1 \int_{\B 1 0} \P_u(y,2r) \ton{r^{-k} \int_{\B {r}{y}} \,\mathrm{d}\mu(x)} \mathrm{d}\mu(y) \frac{\mathrm{d}r}{r} \\
& \leq C(m,\epsilon,\Lambda) \int_{\B 1 0} \int_0^1 \P_u(y,2r) \frac{\mathrm{d}r}{r} \mathrm{d}\mu(x) \\
&\leq C(m,\epsilon,\Lambda)\Lambda\, ,
\end{split}
\end{equation}
where the last inequality follows from 
\begin{gather}
 \int_0^1 \P_u(y,2r) \frac{\mathrm{d}r}{r}\stackrel{\eqref{eq_Pvstheta}}{\leq }  \int_0^1 [\theta(y,4r)-\theta(y,2r)] \frac{\mathrm{d}r}{r}=\lim_{t\to 0} \int_t^1 [\theta(y,4r)-\theta(y,2r)] \frac{\mathrm{d}r}{r} \\
 =\int_{1/2}^1 \theta(y,4r) \frac{dr}{r} + \underbrace{\lim_{t\to 0}\int_{t}^{1/2} \theta(y,4r) \frac{dr}{r} -\int_{2t}^{1} \theta(y,2r) \frac{dr}{r}}_{=0} -\lim_{t\to 0}\int_{t}^{2t} \theta(y,2r) \frac{dr}{r} \leq C(m)\Lambda\, .\notag
\end{gather}
The rectifiability of $\cS^k_\epsilon(u)$ is now a consequence of theorem \ref{th_azzamtolsa}.

By countable additivity, the rectifiability of $\cS^k(u)$ is a corollary of the rectifiability of $\cS^k_\epsilon(u)$ for all $\epsilon>0$. 

It is worth remarking that the uniform Ahlfors upper estimates obtained a priori for the measure $\mu=\cH^k\mres \cur{\cS^k_\epsilon (u)\cap \B 1 0}$ are \textit{essential} to carry out this computation, and actually they are the most difficult part of the estimate. This is why the proof of the rectifiability property is so easy.

\begin{flushright}
 $\square$
\end{flushright}

\subsection{Proof of Proposition \ref{prop_cover}}
 Now we turn to the proof of the covering proposition. We split this proof in two pieces by introducing a secondary covering proposition. 
 
 \begin{proposition}\label{prop_cover_sec}
 Under the assumptions of proposition \ref{prop_cover}, for all $0 < \rho < 1/100$, there exist $\delta=\delta(m,\N,Q,\Lambda,\epsilon,\rho)>0$ and $C_{II}(m)$ such that the following is true.
 
 There exists a finite covering of $\cS=\cS^k_{\epsilon,\delta r}(u) \cap \B R {p} $ of the form
\begin{gather}\label{eq_packing_II}
 \cS \subseteq \bigcup_{x\in \cD} \B {r_x}{x}\, , \quad \text{ with } \ \ r_x\geq r \ \ \text{ and}\ \ \sum_{x\in \cD} r_x^k\leq C_{II}(m) R^k\, .
\end{gather}
Moreover, for each $x\in \cD$, either there exists a $(k-1)$-dimensional space $W_x$ such that 
  \begin{gather}\label{eq_k-1}
   \cW_{x,r_x}\equiv \cur{y\in \cS \cap \B {2r_x}{x} \ \ \text{with}\ \ \theta(y,\rho r_x/20)\geq E-\delta}\subseteq \B {\rho r_x/10}{W_x}\, ,
  \end{gather}
  or $r_x=r$.  
\end{proposition}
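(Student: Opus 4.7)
The proof combines an iterative covering construction with the discrete Reifenberg theorem (Theorem \ref{th_disc_reif}) for the final packing bound. The two analytic inputs are Theorem \ref{th:quantitative_eps_reg}, which forces the singular stratum to lie in a small tube around a $k$-dimensional affine subspace whenever enough pinched points effectively span it, and Theorem \ref{th_best_pinch}/Corollary \ref{cor_best_pinch}, which controls the Jones $\beta$-numbers $D^k_\mu$ of a discrete measure by the $\theta$-drop integrated against $\mu$ on balls that fail to be $(k+1,\e)$-symmetric.

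\textbf{Construction.} Build $\cD$ by scale-by-scale refinement, starting from $B_R(p)$. At any active ball $B_s(y)$ with $s\ge r$ consider the high-density set
\[
\cW(y,s) := \{z \in \cS \cap B_{2s}(y) \, \colon \, \theta(z, \rho s/20) \ge E - \delta\}.
\]
By monotonicity of $\theta$, every $z \in \cW(y,s)$ satisfies $\theta(z,3s)-\theta(z,\rho s/20) \le \delta$. A standard geometric dichotomy gives exactly one of two alternatives: either $\cW(y,s) \subseteq B_{\rho s/10}(W)$ for some affine $(k-1)$-plane $W$, in which case we freeze $B_s(y)$ with $W_y:=W$ and add it to $\cD$; or else $\cW(y,s)$ contains $(k+1)$ points $y_0,\dots,y_k$ that $(\rho s/C(m))$-effectively span a $k$-plane $L$. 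In the second case, choosing $\delta$ smaller than the constant produced by Theorem \ref{th:quantitative_eps_reg} applied with parameters $\rho,\e$, we conclude $\cS \cap B_{s/2}(y) \subseteq B_{\rho s}(L)$, and a boundedly-many-subballs argument extends this localisation to $\cS \cap B_s(y)$. We then cover $\cS \cap B_s(y) \cap B_{\rho s}(L)$ by a Vitali family of balls of radius $\rho s$ centred on $\cS$ and recursively apply the dichotomy to each. When $\rho s < r$ we terminate the branch, recording the ball at clipped radius $r$.

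\textbf{Packing bound and main obstacle.} To bound $\sum_{x\in\cD} r_x^k$, rescale $B_R(p)\to B_1(0)$ and set $\mu := \sum_{x\in\cD} r_x^k \delta_x$. The estimate $\mu(B_1)\le C_{II}(m)$ will follow from Theorem \ref{th_disc_reif} once the Carleson inequality \eqref{eq_reif_ass} is verified. Every ball $B_s(y)$ of the construction is centred on $\cS^k_{\epsilon,\delta r}(u)$ with $s\ge \delta r$, hence is not $(k+1,\e)$-symmetric, so Corollary \ref{cor_best_pinch} gives
\[
D^k_\mu(y,s) \le \frac{C(m)}{\e\, s^k}\int_{B_s(y)} [\theta(z,4s)-\theta(z,2s)]\, d\mu(z).
\]
Plugging this into the left-hand side of \eqref{eq_reif_ass}, swapping integrations by Fubini, and telescoping $\int_0^{\rho'}[\theta(z,4s)-\theta(z,2s)]\,ds/s \le C(m)\Lambda$ reduces the Carleson integral to $\le C(m,\Lambda,\e)\,\mu(B_{2\rho'}(x_0))$, which is closed by an inductive estimate on the packing of $\mu$ at the parent scale. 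This circular coupling between construction and packing is the main technical obstacle: one has to choose $\delta=\delta(m,\N,Q,\Lambda,\e,\rho)$ and $\bar r$ from Theorem \ref{th:quantitative_eps_reg} so that simultaneously alternative (ii) genuinely yields the tube around $L$, every refined ball inherits $s\ge \delta r$ and the non-$(k+1,\e)$-symmetry, and the induction on scale absorbs the prefactor $C(m,\Lambda,\e)$ into the universal Reifenberg constant $\delta_R(m)$. Keeping track of the $\rho$-dependence so that the iteration terminates exactly at scale $r$ with packing $C_{II}(m)R^k$ is the remaining bookkeeping.
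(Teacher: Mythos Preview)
Your overall architecture is right --- dichotomy at each ball, tube containment from Theorem \ref{th:quantitative_eps_reg}, discrete Reifenberg for the packing --- and matches the paper. But there is a genuine gap in how you close the Reifenberg hypothesis.

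You bound the telescope by
\[
\int_0^{t}[\theta(z,4s)-\theta(z,2s)]\,\frac{ds}{s}\le C(m)\Lambda,
\]
which after Fubini gives a Carleson integral of size $C(m,\Lambda,\e)\,\mu(B_{2t}(x_0))$. Even granting the inductive bound $\mu(B_{2t}(x_0))\le C_{II}(m)t^k$, this yields $C(m,\Lambda,\e)C_{II}(m)\,t^k$, which is \emph{not} $\le \delta_R(m)^2 t^k$: the Reifenberg threshold $\delta_R$ is a small dimensional constant and cannot absorb $C(m,\Lambda,\e)$. Your sentence ``the induction on scale absorbs the prefactor $C(m,\Lambda,\e)$ into the universal Reifenberg constant $\delta_R(m)$'' is exactly where the argument breaks.

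The paper's fix is the one ingredient you omit: Proposition \ref{prop_unipinch}. Rather than centring the refined balls on $\cS$, the paper places the new centres on the $k$-plane $V_x$ itself (the set $K$ in the construction). Proposition \ref{prop_unipinch} then guarantees that every centre $x\in\cD$ satisfies $\theta(x,r_x)\ge E-\eta$, i.e.\ the \emph{total} drop $\theta(x,1)-\theta(x,r_x)\le\eta$ is uniformly small along the whole covering. With this in hand the telescope gives $\int \Pinch_\cD\,ds/s\le c\eta$ rather than $c\Lambda$, and the Carleson integral becomes $c(m)C_1(m,\e)C_{II}(m)\,\eta\,t^k$. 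Now $\eta$ is a free parameter: choosing it (hence $\delta$) small enough forces this below $\delta_R^2 t^k$, and Reifenberg applies. Centring on $\cS$ as you do gives the non-$(k+1,\e)$-symmetry for free but forfeits the uniform pinching; the paper instead centres on $V_x$, recovers the pinching from Proposition \ref{prop_unipinch}, and retrieves the non-symmetry from a nearby point of $\cS$ (your observation that balls around points of $\cS$ are not $(k+1,\e)$-symmetric, transferred by inclusion).
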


Assuming this proposition, we prove proposition \ref{prop_cover}. The idea is simple: we consider this second covering, and refine it inductively on each ball with $r_x\geq r$ and no uniform energy drop.

\begin{proof}[Proof of proposition \ref{prop_cover}]
Let $0 < \rho < 1$ to be fixed later, and let $A \in \mathbb{N}$ be the first integer such that $\rho^{A} < r$. Also assume without loss of generality $p = 0$ and $R = 1$.
 
For all $i=1,\cdots, A$, we construct a covering of $\cS$ of the form
 \begin{gather}\label{eq_covcov}
  \cS\cap \B 1 0 \subseteq \bigcup_{x\in \cR_i} \B {r}{x}  \cup  \bigcup_{x\in \cF_i} \B {r_x}{x}\cup  \bigcup_{x\in \cB_i} \B {r_x}{x}\, ,
 \end{gather}
where $\cR_i$ are the balls of radius $r$ in the covering, $\cF_i$ are the balls where the uniform energy drop condition \eqref{eq_Edrop} is satisfied, and $\cB_i$ are the bad balls, where none of the two conditions is verified. We want to obtain uniform packing bounds on $\cR_i$ and $\cF_i$, and exponentially small packing bounds on $\cB_i$. We will refine our covering only on bad balls by re-applying the second covering lemma on those, and this is why we need smallness on their packing bounds. In detail, we want 
\begin{gather}\label{eq_cov5}
 \sum_{x\in \cR_i\cup \cF_i } r_x^k \leq C_F(m)\ton{\sum_{j=0}^i 7^{-j} }\, ,\quad \sum_{x\in \cB_i }r_x^k \leq 7^{-i}\, .
\end{gather}

\subsubsection{Induction step}
Pick a generic ball $\B {R}{p}$, and apply the second covering in Proposition \ref{prop_cover_sec} to it. We obtain a covering of the form
\begin{gather}
 \cS \cap \B R p \subseteq \bigcup_{x\in \cD} \B {r_x}{x}\, , \quad \text{ with } \ \ r_x\geq r \ \ \text{ and}\ \ \sum_{x\in \cD} r_x^k\leq C_{II}(m) R^k\, .
\end{gather}
We split $\cD$ into two disjoint sets: $\cD=\cD_r\cup \cD_+$, where the first set is the one with $r_x\leq 60\rho^{-1} r$. Observe that if $x$ is in the second set then \eqref{eq_k-1} is valid. For all $x\in \cD_r$, consider a simple covering of $\B {r_x}{x}$ by balls of radius $r$ with number bounded by $c(m)\rho^{-m}$, and let $\cR_p$ be the union of all centers in these coverings. Note that if $r_x=r$, we can keep this ball unchanged.

For all $x\in \cD_+$, consider a covering of $\B {r_x}{x}$ made of balls of radius $\rho r_x/{60}> r$ centered inside this ball and such that the family of balls with half the radius are pairwise disjoint. In particular, let
\begin{gather}
 \B {r_x}{x}\subseteq \bigcup_{y\in \cB_{x}}\B {\rho r_x/60}{y} \cup\bigcup_{y\in \cF_x} \B {\rho r_x/60}{y}\, ,
\end{gather}
where 
\begin{gather}
 \cW_{x,r_x}\cap \bigcup_{y\in \cF_x} \B {2\cdot(\rho r_x/60)}{y}=\emptyset \, , \quad \cB_{x}\subseteq \B {\rho r_x}{W_x}\, .
\end{gather}
Thus, the balls in $\cF_x$ will have a uniform energy drop, in particular we have that for all $y\in \cF_x$ and $z\in \B {2\cdot(\rho r_x/60)}{y}=\B {2 r_y}{y}$, 
\begin{gather}
 \theta(z,3r_y)< E-\delta\, .
\end{gather}

Moreover, the number of balls in $\cB_{x}$ is well controlled. Indeed, since $\cB_{x}\subseteq \B {\rho r_x}{W_x}$, $\cur{\B {\rho r_x/120}{y}}_{y\in \cB_x}$ are pairwise disjoint and $W_x$ is a $k$-dimensional subspace, then 
\begin{gather}
 \#\cur{\cF_x}\leq c(m) \rho^{-m}\, , \quad \#\cur{\cB_x}\leq c(m) \rho^{1-k}\, .
\end{gather}

Set $\cB_{p} = \cup_{x\in \cD} \cB_x$ and $\cF_p = \cup_{x\in \cD} \cF_x$. We have
\begin{gather}
 \sum_{z\in \cR_p \cup \cF_p} r_z^k \leq c(m) \rho^{-m+k}\sum_{x\in \cD} r_x^k \leq c(m) \rho^{-m+k} C_{II}(m)R^k \, ,  \\
 \sum_{z\in \cB_p} r_z^k \leq c(m) \rho^1\sum_{x\in \cD} r_x^k \leq c(m) \rho C_{II}(m)R^k\, .
\end{gather}
We choose $\rho=\rho(m)\leq 1/100$ sufficiently small so that
\begin{gather}
 c(m) \rho C_{II}(m)\leq 1/7\, .
\end{gather}
In this way, we have the estimates 
\begin{gather}
 \sum_{z\in \cR_p \cup \cF_p} r_z^k \leq C_{III}(m)R^k \, \quad \sum_{z\in \cB_p} r_z^k \leq 7^{-1}R^k\, .
\end{gather}

\subsubsection{Finishing the proof}
With the induction step, the proof follows easily. For $i=1$, apply the induction step to $\B 1 0$ and we obtain \eqref{eq_covcov} for $i=1$ with \eqref{eq_cov5}.

For generic $i$, we have by induction
\begin{gather}
 \cS\cap \B 1 0 \subseteq \bigcup_{x\in \cR_i} \B {r}{x}  \cup  \bigcup_{x\in \cF_i} \B {r_x}{x}\cup  \bigcup_{x\in \cB_i} \B {r_x}{x}\, .
\end{gather}
Apply the induction step on all the balls $\cur{\B {r_x}{x}}_{x\in \cB_i}$ separately, and define
\begin{gather}
 \cR_{i+1}= \cR_i \cup \bigcup_{x\in \cB_i} \cR_{x} \, , \quad \cF_{i+1}= \cF_i \cup \bigcup_{x\in \cB_i} \cF_{x}\, , \quad \cB_{i+1} = \bigcup_{x\in \cB_i} \cB_{x}\, .
\end{gather}
By construction, we have the estimates
\begin{gather}\label{eq_ind_est}
  \sum_{z\in \cR_{i+1} \cup \cF_{i+1}} r_z^k \leq C_{III}(m) \sum_{s=0}^{i} 7^{-s} \, \quad \sum_{z\in \cB_{i+1}} r_z^k \leq 7^{-i-1}\, .
\end{gather}
Note that at the step $i=A$ all the balls in our covering will either have energy drop (if they are in $\cF_A$) or have radius $=r$ (if they are in $\cR_A$). Equation \eqref{eq_ind_est} for $i=A$ gives the desired bound on the final covering.

\end{proof}

\vspace{5mm}

Now we turn our attention to the proof of proposition \ref{prop_cover_sec}, which is the last one needed to complete the main theorem.

\subsection{Proof of proposition \ref{prop_cover_sec}}
For convenience, we assume $p=0$ and $R=1$. Fix $\e, \rho > 0$, and let $A$ be such that $\rho^{A}\leq r<\rho^{A-1}$. The proof is based on an inductive covering by balls, where the discrete Reifenberg is applied in order to control the number of these balls.

\subsubsection{Construction of the covering} We split the inductive covering in two parts: at first we simply construct the covering inductively, and then we prove the packing bounds using Reifenberg's theorem. Specifically, we start by looking for an inductive (for $i=0,1,\cdots,A$) covering of the form 
\begin{gather}
\cS  \subseteq \bigcup_{x\in \cB_i} \B {r_x} x \cup \bigcup_{x\in\cG_i} \B {r_x} x  \, ,
\end{gather}
where the elements of $\cB_i$ are the centers of the \textit{bad balls} in our covering, and $\cG_i$ are the centers of the \textit{good balls}. In particular, we want :
\begin{enumerate}
 \item for all $i$ and $x\in \cB_i$, $r_x\geq \rho^i$ and there exists a $(k-1)$-dimensional subspace $W_x$ such that 
 \begin{gather}\label{eq_cWxr}
  \cW_{x,r_x}\equiv \cur{y\in \cS \cap \B {2r_x}{x} \ \ \text{such that}\ \ \theta(y,\rho r_x/20)\geq E-\delta}\subseteq \B {\rho r_x/10}{W_x}\,,
 \end{gather}
 where $\delta > 0$ is fixed, to be determined later;
 \item for all $i=1,\cdots,A$ and $x\in \cG_i$, $r_x=\rho^i$ and the set $\cW_{x,r_x}$ defined above $(\rho r_x/20)$-effectively spans some $k$-dimensional affine subspace $V_x$;
 \item for $i=A$, we have the bound
 \begin{gather}
  \sum_{x\in \cB_A\cup \cG_A} r_x^{k}\leq C_{II}(m)\, .
 \end{gather}

\end{enumerate}
Moreover, we request some extra properties of the centers of the covering in order to apply the discrete-Reifenberg theorem:
\begin{enumerate}\setcounter{enumi}{3}
 \item for all $i$, the balls in the collection $\cur{\B {r_x/10}{x}}_{x\in \cG_i\cup \cB_i}$ are pairwise disjoint;
 \item for all $i\geq 1$ and $x\in \cG_i$, we have the energy bound 
 \begin{gather}\label{eq_E-eta}
  \theta(x,r_x)\geq E-\eta\, \quad \mbox{for some $\eta > 0$} \,;
 \end{gather}
 \item there exists a constant $c(m)$ such that for all $i$, the balls in the collection $\cur{\B s {x}}_{x\in \cG_i, \   s\in [r_x,1]}$ are not $(k+1,\epsilon/c(m))$-symmetric.
\end{enumerate}
At each induction step, we will refine our covering on the good balls, while leaving the bad balls untouched.

For $i=0$, consider the set $\cW_{0,1}$. If this set does NOT $\rho/20$-effectively span something $k$-dimensional, then we call $B_{1}(0)$ a bad ball, set $\cG_i=\emptyset$ for all $i$ and $\cur{0}=\cB_0=\cB_A$ with $r_0=1$. This covering immediately satisfies all the properties of proposition \ref{prop_cover_sec}. 

In the other case, set $\cG_0=\cur{0}$ with $r_0=1$. 

\paragraph{Induction step}
Assuming by induction that all the properties listed above are valid up to the index $i$, we want to produce the covering for $i+1$. In order to do so, we want to refine our covering on good balls, and leave the previous bad balls intact.

Fix an arbitrary $x\in \cG_i$, and consider the set $\cW_{x,r_x}$. Since $\B {\rho^i}{x}$ is a good ball, by definition this set $[\rho^{i+1}/20]$-effectively spans a $k$-dimensional affine subspace $V_x$. By applying theorem \ref{th:quantitative_eps_reg} to the ball $\B {4\rho^i}{x}$, we find that there exists a $\delta(m,\Lambda,\epsilon,\rho)$ sufficiently small so that 
\begin{gather}\label{eq_cov2}
 \cS^k_{\e,\delta r} (u)\cap \B{2\rho^i}{x} \subset \B {\rho^{i+1}/10}{V_x}\, 
\end{gather}

Consider the set
\begin{gather}
 K =\bigcup_{x \in \cG_i} \ton{\B {\rho^i}{x} \cap V_x} \setminus \bigcup_{x\in \cB_i}\B {r_x}{x}\, .
\end{gather}
Given the inclusion \eqref{eq_cov2}, and since we have chosen $\rho\leq 1/100$, we have
\begin{gather}
 \cS \setminus \bigcup_{x\in \cB_i}\B {r_x}{x} \subseteq \B{\rho^{i+1}/5 }{K}\, .
\end{gather}
Let $\cD_K\subseteq K$ be a maximal subset of points at least $\rho^{i+1}/5$ apart, so that the balls $\cur{\B {\rho^{i+1}/10}{x}}_{x\in \cD_K}$ are pairwise disjoint. Note that these balls are also disjoint from $\cur{\B {r_x/3}{x}}_{x\in \cB_i}$ by construction. Moreover, by maximality of the subset
\begin{gather}
 \cS \setminus \bigcup_{x\in \cB_i}\B {r_x}{x} \subseteq \bigcup_{x\in \cD_K} \B{2 \rho^{i+1}/5 }{x}
\end{gather}
We can discard from this collection all the balls $\B{2 \rho^{i+1}/5 }{x}$ that have empty intersection with $\cS$. Now consider the collection
\begin{gather}
 \cur{\B{ \rho^{i+1} }{x}}_{x\in \cD_K}\, ,
\end{gather}
and classify these points into good and bad balls according to whether or not \eqref{eq_cWxr} is satisfied. In particular, if $\cW_{x,\rho^{i+1}}$ $\rho^{i+2}/20$-effectively spans a $k$-dimensional subspace $V_x$, then we say that $x\in \tilde \cG_{i+1}$, and $x\in \tilde \cB_{i+1}$ otherwise. We set
\begin{gather}
 \cB_{i+1}=\cB_i \cup \tilde \cB_{i+1}\, , \quad \cG_{i+1}=\tilde \cG_{i+1}\, .
\end{gather}

This takes care of properties $1$ and $2$ in the induction. 

Now fix any $x\in \cD_K$. By construction, there exists an $x'\in \cG_i$ such that $x\in V_{x'}\cap \B {r_{x'}}{x'}$. Hence, we can apply proposition \ref{prop_unipinch}, and prove that for all $\eta>0$ there exists a $\delta(m,\N,Q,\Lambda,\rho,\eta)$ sufficiently small so that 
\begin{gather}\label{eq_db}
 \theta\ton{x,\rho^{i+1}/40}\geq E-\eta\, .
\end{gather}
Moreover, there also exists some $x'\in \cS\cap \B {2\rho^{i+1}/5}{x}$. By definition of $\cS$, this implies that for every $(k+1)$-dimensional subspace $T=T_{x'}$:
\begin{gather}\label{eq_est_k+1}
 c(m) \rho^{(2-m)(i+1)} \int_{\B {\rho^{i+1}}{x}}\abs{D_T u}^2 \geq \ton{2\rho^{i+1}/5}^{2-m} \int_{\B {2\rho^{i+1}/5}{x'}} \abs{D_T u}^2 \geq \epsilon\, .
\end{gather}
In other words, $\B {\rho^{i+1}}{x}$ is not $(k+1,\epsilon/c(m))$-symmetric. Thus all the properties of our inductive covering are satisfied.

\paragraph{i=A} For $i=A$, one can use the same construction as above, but with radius $r$ instead of radius $\rho^A$. At this stage, one also does not need to make any distinction between good and bad balls.

At this stage, we also set
\begin{gather}
 \cD = \cB_A \cup \cG_A\, .
\end{gather}
We are left to prove the packing estimates \eqref{eq_packing_II}. 

\subsubsection{Volume estimates}\label{sec_packing_II}
We will apply the discrete Reifenberg theorem to the measure 
\begin{gather}
 \mu_{\cD} = \sum_{x\in \cD} r_x^k \delta_{x}\, .
\end{gather}
In order to do so, we need to check that \eqref{eq_reif_ass} is satisfied for this $\mu$, and we exploit the best approximation theorem \ref{th_best_pinch}.

However, as it will be evident later on, we cannot apply this theorem directly. Instead, we will prove the volume estimate with an upwards induction.

\subsubsection{Inductive statement}
For convenience, we define the one-parameter family of measures $\mu_t$ by setting
\begin{gather}
 \cD_t = \cD\cap \cur{r_x\leq t}\, ,\quad  \mu_t =\mu\mres \cD_t \leq \mu\, .
\end{gather}

Let $T$ be such that $2^{T-1} r < 1/70 \leq 2^{T} r$. We will prove by induction on $j=0, 1,\cdots, T$ that there exists a constant $C_{I}(m)$ such that for all $x\in \B 1 0$ and $s=2^j r$:
\begin{gather}
 \mu_s \ton{\B s x}=\sum_{y\in \cD\cap \B {s}{x} \ s.t. \ r_y\leq s} r_y^k  \leq C_{I}(m) s^k\, .
\end{gather}
Once this has been proved, with a simple covering argument we can turn the estimates for $j=T$ into the estimates \eqref{eq_packing_II}, replacing $C_{I}(m)$ with $C_{II}(m)=c(m)C_{I}(m)$ if necessary.

\paragraph{Base step in the induction, $j=0$ .} The first step of the induction is easy. Since by construction $r_x\geq r$ for all $x\in \cD$, and since the balls $\cur{\B{r_x/10}{x}}_{x\in \cD}$ are pairwise disjoint, a standard covering argument shows that for all $x\in \B 1 0$, 
\begin{gather}\label{eq_Ciii}
 \mu_r \ton{\B r x}\leq C_{0}(m) r^k\, .
\end{gather}

\subsubsection{Induction step}
The induction step is divided into two parts: first we are going to prove a weak packing bound for balls of radius $2^{j+1}r$. With this estimate, we will be able to apply the discrete Reifenberg theorem, which gives us a uniform scale invariant upper bound for the measure that lets us complete the induction.

\paragraph{\emph{Coarse bounds}}  Assuming that the induction step $j$ is proved, we can easily obtain a rough bound for $j+1$. Indeed, let $x\in \B 1 0$ be arbitrary, and consider the ball $\B {2^{j+1} r}{x}$. By covering this ball with $c(m)$ balls of half the radius, and using the induction hypothesis, we can estimate
\begin{gather}
 \mu_{2^j r} \ton{\B {2^{j+1} r}{x}}\leq c(m) C_{I}(m) (2^{j+1}r)^k\, .
\end{gather}
With a similar covering argument, we can estimate the ``new contributions'' in $\mu_{2^{j+1} r}$. To be precise, since $\cur{\B{r_x/10}{x}}_{x\in \cD}$ are all pairwise disjoint, we have
\begin{gather}
 \bar \cD= \cur{x\in \cD\cap \B {2^{j+1}r}{x}  \ \ \text{with} \  r_x\in (2^{j}r,2^{j+1}r] } \quad \quad \sum_{x\in \bar \cD} r_x^k \leq C_{0}(m) (2^{j+1}r)^k\, .
\end{gather}
Thus, choosing $C_{I}(m)\geq C_{0}(m)$, we have
\begin{gather}
 \mu_{2^{j+1} r} \ton{\B {2^{j+1} r}{x}}\leq c(m) C_{I}(m) (2^{j+1}r)^k\, .
\end{gather}

\paragraph{\emph{Refined estimate}}
In order to refine this last estimate, we need to apply the discrete Reifenberg \ref{th_disc_reif}. An essential tool is given by the estimates in corollary \ref{cor_best_pinch}. Fix any $\B {2^{j+1}r}{x}$ for $x\in \cD$. For convenience, hereafter we will denote
\begin{gather}\label{eq_mu_restricted}
 \mu_{2^{j+1}r}\mres \B {2^{j+1}r}{x} \equiv \mu\, .
\end{gather}
Set also for $y\in \cD$:
\begin{equation}
 \Pinch_{\cD}(y,s) = \begin{cases}
              \theta(y,4s)-\theta(y,2s) & \text{ for } s\geq r_y/10\, ,  \\
              0&\text{ for } s< r_y/10\, .
               \end{cases}
\end{equation}

By construction, and in particular by the estimate in \eqref{eq_est_k+1} and \eqref{eq_E-eta}, for $\eta$ sufficiently small we can apply Corollary \ref{cor_best_pinch} to $\mu$ and any ball $\B s x$ with $x\in \cD$ and $s\in [r_x,1]$, and obtain 
\begin{gather}
 D_{\mu}^k(x,s)  \leq C_1 s^{-k}\int_{\B {s}{x}} \Pinch_{\cD}(y,s)\,\mathrm{d}\mu(y)\, .
\end{gather}
As a corollary of this and \eqref{eq_beta_rough}, we can extend this relation for all $s\in [r_x/10,1]$, up to enlarging $C_1$ by $c(m)$:
\begin{gather}\label{eq_best_applied}
 D_{\mu}^k(x,s)  \leq c(m) C_1 s^{-k}\int_{\B {10 s}{x}} \Pinch_{\cD}(y,10s)\,\mathrm{d}\mu(y)\, .
\end{gather}
Note that this relation is trivially true also for $s\leq r_x/10$, because in this case the support of the measure $\mu$ inside the ball $\B {r_x/10}{x}$ is an isolated point.

We can use this estimate to prove \eqref{eq_reif_ass} for the measure $\mu$. Indeed, fix any $y\in \B {2^{j+2}r}{x}$, $t\in (0,2^{j+1}r]$, and in turn choose any $s\in [0,t]$. For these parameters, we can bound:
\begin{gather}
 \int_{\B t y} D^k_{\mu}(z,s) \, \mathrm{d}\mu(z)\stackrel{\eqref{eq_best_applied}}{\leq} C_1 s^{-k}\int_{\B t y} \qua{\int_{\B {10 s}{z}} \Pinch_{\cD}(p,10s)\, \mathrm{d}\mu(p)} \mathrm{d}\mu(z)\, .
\end{gather}
Considering that
\begin{gather}
 \cur{(p,z) \ s.t. \ \ \abs{z-y}\leq t \ \ \text{and} \ \ \abs{p-z}\leq 10 s }\subset \cur{(p,z) \ s.t. \ \ \abs{p-y}\leq t+10s \ \ \text{and} \ \ \abs{p-z}\leq 10 s }\, ,
\end{gather}
we can exchange the variables of integration and estimate
\begin{equation}
\begin{split}
 \int_{\B t y} D^k_{\mu}(z,s) \, \mathrm{d}\mu(z)&\leq C_1 \int_{B_{11t}(y)} \frac{\mu(\B {10s} p)}{s^k} \Pinch_{\cD}(p,10s) \ \mathrm{d}\mu(p) \\ &\leq c(m) C_1 C_{II}\int_{B_{11t}(y)} \Pinch_{\cD}(p,10s) \ \mathrm{d}\mu(p)\, .
 \end{split}
\end{equation}
Recall that by \eqref{eq_mu_restricted}, $\mu(A)= \mu(A\cap \B {2r^{j+1}r}{x}$. Note that the induction hypothesis and the coarse estimates have been used to obtain the last inequality. 

By integrating this inequality on $\int_0^t \frac{\mathrm{d}s}{s}$, we get
\begin{gather}
  \int_{\B t y}\ton{\int_0^t D^k_{\mu}(z,s)\,{\frac{\mathrm{d}s}{s}}}\, \mathrm{d}\mu(z) \leq c(m) C_1 C_{II} \int_{\B {11t} y} \qua{\int_0^{t} \Pinch_{\cD}(z,10s) \frac{\mathrm{d}s}{s}}\mathrm{d}\mu(z)\, .
\end{gather}
Note that for all $x\in \cD$, $\theta(0,1)-\theta(0,r_x)\leq \eta$. Thus for $t\leq 1/70$ we have
\begin{gather}
 \int_0^{t} \Pinch_{\cD}(x,10s) \frac{\mathrm{d}s}{s}=\int_{r_x}^{t} [\theta(x,40s)-\theta(x,20s)] \frac{\mathrm{d}s}{s}\\
 =\int_{t/2}^{t} \theta(x,40s)\frac{\mathrm{d}s}{s} + \underbrace{\int_{r_x}^{t/2} \theta(x,40s)\frac{\mathrm{d}s}{s}-\int_{2r_x}^{t/2} \theta(x,20s)\frac{\mathrm{d}s}{s}}_{=0} -\int_{r_x}^{2r_x} \theta(x,20s)\frac{\mathrm{d}s}{s}\\
 =\int_{t/2}^{t} \qua{\theta(x,40s)-\theta\ton{x,40 \frac{r_x}{t} s}}\frac{\mathrm{d}s}{s}\leq c\eta\, .
\end{gather}
This in turn implies
\begin{gather}
 \int_{\B t y}\ton{\int_0^t D^k_{\mu}(z,s)\,{\frac{\mathrm{d}s}{s}}}\, \mathrm{d}\mu(z) \leq c(m) C_1(m,\epsilon) C_{II} \eta t^k\, .
\end{gather}

By picking $\eta$ sufficiently small (in turn: by picking $\delta(m,\N,Q,\Lambda,\epsilon,\eta)$ sufficiently small), we can apply the discrete Reifenberg theorem to $\mu$ and prove that
\begin{gather}
  \mu_{2^{j+1} r} \ton{\B {2^{j+1} r}{x}}\leq C_R(m) (2^{j+1}r)^k\, .
\end{gather}
By picking $C_{II}(m) = \max\cur{C_{0}(m),C_R(m)}$, we complete the induction step, and in turn the proof of this proposition.

\begin{flushright}
 $\square$
\end{flushright}

\section{Continuity in non-positively curved spaces}\label{sec_nonpositive}

This section is devoted to the proof of the following result.
\begin{theorem}\label{thm.nonpositive=continuous}
Let $\N$ be a complete, simply connected manifold all of whose sectional curvatures are non-positive. Then, every minimizing harmonic map $u \in W^{1,2}(\Omega, \Iq(\N))$ satisfies \[\sing_{H}(u)= \emptyset.\]
\end{theorem}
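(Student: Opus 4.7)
The plan is to argue by contradiction and reduce, via a tangent-map analysis, to the classical fact that harmonic maps from closed positive-Ricci manifolds into Cartan--Hadamard targets are constant. The case $m=2$ is immediate: \cite[Theorem 0.1]{Hir16} gives $\dim_{\Ha}\sing_{H}(u)\le m-3<0$, so $\sing_{H}(u)=\emptyset$. Assume then $m\ge 3$ and, for contradiction, $x_{0}\in\sing_{H}(u)$; by the $\e$-regularity Theorem~\ref{Q_eps_reg} we have $\Theta_{u}(x_{0})>0$. A standard Federer--Almgren dimension-reduction argument (monotonicity of $\theta$, the Compactness Theorem~\ref{HM_Q_compactness}, and iterated extraction of tangent maps to quotient out the translational spine $S(\phi)$) produces a $0$-homogeneous, Dirichlet-minimizing tangent map $\phi\colon\R^{k}\to\Iq(\N)$ with $\Theta_{\phi}(0)=\Theta_{u}(x_{0})>0$ and $\sing_{H}(\phi)=\{0\}$; the bound $\dim_{\Ha}\sing_{H}(\phi)\le k-3$ then forces $k\ge 3$.

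The core step is to show that the H\"older continuous $Q$-valued harmonic map $\psi:=\phi|_{S^{k-1}}\colon S^{k-1}\to\Iq(\N)$ is constant. Using simple connectedness of both $\N$ and $S^{k-1}$ ($k\ge 3$), one would establish a global single-valued decomposition $\psi=\sum_{\ell=1}^{Q}\a{\phi_{\ell}}$ with $\phi_{\ell}\in C^{0,\alpha}(S^{k-1},\N)$. Each $\phi_{\ell}$ is then a single-valued Dirichlet-minimizing harmonic map into the Cartan--Hadamard target $\N$, hence smooth by classical Schoen--Uhlenbeck/Eells--Sampson regularity. The Bochner identity
\[
\tfrac{1}{2}\Delta|d\phi_{\ell}|^{2}=|\nabla d\phi_{\ell}|^{2}+\mathrm{Ric}_{S^{k-1}}(d\phi_{\ell},d\phi_{\ell})-\langle R^{\N}(d\phi_{\ell},d\phi_{\ell})d\phi_{\ell},d\phi_{\ell}\rangle,
\]
combined with $\mathrm{Ric}_{S^{k-1}}=(k-2)g>0$ and $\sec_{\N}\le 0$, makes $|d\phi_{\ell}|^{2}$ nonnegative and subharmonic on the closed manifold $S^{k-1}$; the maximum principle and the strict positivity of $\mathrm{Ric}_{S^{k-1}}$ force $d\phi_{\ell}\equiv 0$, so each $\phi_{\ell}$ is constant. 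Consequently $\psi$ is a constant $Q$-point, $\phi$ is constant on $\R^{k}$, and $\Theta_{\phi}(0)=0$, the desired contradiction.

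The principal obstacle I foresee is the global single-valued decomposition of $\psi$. The obstruction lies in the monodromy of local continuous selections around the \emph{branching locus} $B_{\psi}\subset S^{k-1}$, i.e.\ the set where at least two of the $Q$ values of $\psi$ coincide. Simple connectedness of $\N$ kills the ``geometric'' monodromy coming from loops in the ordered configuration space of $\N$, whereas simple connectedness of $S^{k-1}$ kills the ``topological'' $S_{Q}$-monodromy provided $B_{\psi}$ has codimension at least $2$ in $S^{k-1}$. Establishing this codimension bound---plausible from an adaptation of the Almgren/De Lellis--Spadaro theory of branch points of $Q$-valued minimizers to the manifold-valued setting---is the key technical step, and is precisely where the simply connected assumption on $\N$ enters in an essential way; it is the same step that must fail in the counterexample promised in Section~\ref{sec_nonpositive}.
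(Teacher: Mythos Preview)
Your approach has a genuine gap that you yourself flag: the global single-valued decomposition of $\psi$. This is not a mere technicality. Even granting that the branching locus $B_{\psi}$ has Hausdorff codimension at least $2$ in $S^{k-1}$ (which is what the Almgren--De Lellis--Spadaro theory yields for $\R^{N}$-valued minimizers and which one might hope to transfer), the complement $S^{k-1}\setminus B_{\psi}$ need \emph{not} be simply connected: removing a codimension-$2$ submanifold typically creates fundamental group (e.g.\ $\R^{3}$ minus a line). Hence the $\mathcal{P}_{Q}$-monodromy obstruction survives and no global selection follows. You would need codimension at least $3$, and no such bound is available. A second gap: even if a continuous selection $\psi=\sum_{\ell}\a{\phi_{\ell}}$ existed, the passage from ``$\phi$ is $Q$-valued minimizing'' to ``each $\phi_{\ell}$ is single-valued minimizing (or even smooth harmonic)'' is not justified at points where sheets touch, so the Bochner argument cannot be invoked without further work.

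The paper's proof takes a completely different route that bypasses decomposition altogether. The key observation (Lemma~\ref{lem.convex=subharmonic}) is that for any $C^{2}$ function $f\colon\N\to\R$ with $\nabla^{2}f\ge 0$, the scalar function $x\mapsto\sum_{\ell=1}^{Q} f(u_{\ell}(x))$ is well defined independently of any selection and is distributionally subharmonic for every $Q$-valued minimizer $u$; this comes directly from the outer variation formula~\eqref{outer_variation_formula}. For a $0$-homogeneous tangent map, subharmonicity plus homogeneity force each such sum to be constant. Since $\N$ is Cartan--Hadamard, $q\mapsto d_{p}(q)^{2}$ is smooth and convex for every $p\in\N$; applying the lemma to $f=d_{p}^{2k}$ for all $p$ and all $k>1$, an elementary power-sum argument (Lemma~\ref{lem.algebraic identity}) shows that the multiset $\{\dist_{\N}(u_{\ell}(x),p)\}_{\ell}$ is independent of $x$, hence $u(x)$ is constant. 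This avoids both the selection problem and any appeal to higher regularity of individual sheets; simple connectedness of $\N$ enters only to guarantee that $d_{p}^{2}$ is globally smooth and convex.
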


The proof will be split into two parts. In the first part of the argument we will show a general lemma, Lemma \ref{lem.convex=subharmonic}. Then, in subsection \ref{subsec.simply connected situation} we will show how the lemma implies the theorem.

Observe that in the single-valued case $Q=1$ the hypothesis that $\pi_{1}(\N) = \{0\}$ is not necessary: indeed, in subsection \ref{subsec.general situation} we will show how the same result holds when $Q=1$ under the weaker assumption that $\N$ is connected. The proof will follow from the simply connected situation by means of lifting of Lipschitz-continuous functions into covering spaces. The hypothesis that $\N$ is simply connected, instead, is indispensable when $Q > 1$: in subsection \ref{subsec:Q-valued counterexample} we will provide an example of a \emph{singular} $Q$-valued minimizing harmonic map in a flat target manifold $\N$.

\begin{lemma}\label{lem.convex=subharmonic}
Let $f: \N \to \R$ be a $C^2$-regular function such that $\nabla^2f \ge 0$ on $T_p\N$ for all $p \in N$. Then 
\[ f\circ u = \sum_{\ell=1}^Q f(u_\ell) = {\rm const}. \]
for any $0$-homogeneous Dirichlet minimizer $u: \R^m \to \Iq(\N)$.
\end{lemma}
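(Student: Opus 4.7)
The plan is to show first that $v := f \circ u = \sum_{\ell=1}^{Q} f(u_\ell)$ is weakly subharmonic on $\R^{m}$, and then to exploit the $0$-homogeneity together with the compactness of the unit sphere to conclude that $v$ must be constant.

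For the subharmonicity, I would fix a non-negative test function $\phi \in C^{\infty}_{c}(\R^{m})$, a $C^{2}$ extension $\bar{f}\colon \R^{N} \to \R$ of $f$ to a neighborhood of $\N$ (for instance $\bar{f} := f \circ \pi$ where $\pi$ is a smooth nearest-point projection onto $\N$), and plug the ambient vector field $Y(y,p) := \phi(y)\, \nabla \bar{f}(p)$ into the outer variation formula \eqref{outer_variation_formula}. A direct computation gives
\[
\langle D_{i}u_\ell, D_{i}(Y(y,u_\ell))\rangle = D_{i}\phi \cdot \langle D_{i}u_{\ell}, \nabla\bar{f}(u_{\ell})\rangle + \phi \cdot \nabla^{2}\bar{f}(u_{\ell})(D_{i}u_{\ell}, D_{i}u_{\ell}),
\]
and, since $D_{i}u_{\ell}$ is tangent to $\N$ at $u_{\ell}$, the first inner product equals $D_{i}(f \circ u_{\ell})$. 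The standard identity for a submanifold $\N \hookrightarrow \R^{N}$,
\[
\nabla^{2}_{\N} f(X,X) = \nabla^{2}\bar{f}(X,X) + \langle A(X,X), \nabla \bar{f}\rangle \quad \text{for all } X \in T_{p}\N,
\]
then lets the ambient Hessian term combine with the second fundamental form term already appearing in \eqref{outer_variation_formula}, and after summing over $i,\ell$ the outer variation formula reduces to
\[
\int \langle D\phi, Dv\rangle \, dy = -\int \phi \sum_{\ell, i} \nabla^{2}_{\N} f(u_\ell)(D_{i}u_{\ell}, D_{i}u_{\ell}) \, dy \leq 0,
\]
where the inequality uses the hypothesis $\nabla^{2}_{\N} f \geq 0$. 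This is precisely weak subharmonicity of $v$ on $\R^{m}$.

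Next I would exploit the $0$-homogeneity. Since $u$ is $0$-homogeneous and $\N$ is compact, $v(x) = g(x/|x|)$ for some $g \in W^{1,2}(\Sf^{m-1}) \cap L^{\infty}(\Sf^{m-1})$. Rewriting the inequality $\int \langle D\phi, Dv\rangle \, dy \leq 0$ in polar coordinates and testing against functions of the product form $\phi(r\omega) = \chi(r)\psi(\omega)$, with $\chi,\psi \geq 0$ smooth and $\chi$ supported away from $0$, the radial derivative of $v$ drops out and one arrives at
\[
\int_{\Sf^{m-1}} \langle \nabla_{\Sf^{m-1}} \psi, \nabla_{\Sf^{m-1}} g\rangle \, d\omega \leq 0 \quad \text{for every } \psi \geq 0,
\]
which is exactly weak subharmonicity of $g$ on the closed manifold $\Sf^{m-1}$. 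Testing now against the non-negative function $g - \essinf_{\Sf^{m-1}} g \in W^{1,2}(\Sf^{m-1})$ gives $\int_{\Sf^{m-1}}|\nabla g|^{2} \leq 0$, so $g$, and therefore $v$, must be constant.

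The main technical obstacle is carrying out the first step cleanly: one has to pick a global $C^{1}$ extension field $Y$ on $\R^{m}\times\R^{N}$ compactly supported in $y$, and verify that the ambient Hessian plus the second fundamental form contribution really collapses to the intrinsic Hessian $\nabla^{2}_{\N} f$. Both points are standard once $\bar{f}$ is chosen with care, the behaviour of $\bar{f}$ off $\N$ being irrelevant since $u_{\ell}(y) \in \N$ almost everywhere. A minor additional point is that the origin is a single point of Lebesgue measure zero, so whether $v$ is defined there is immaterial for the weak formulation in the second step.
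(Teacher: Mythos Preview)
Your proposal is correct and follows essentially the same two-step strategy as the paper: first use the outer variation formula with $Y(y,p)=\phi(y)\nabla\bar f(p)$ to obtain weak subharmonicity of $f\circ u$, then use $0$-homogeneity to force constancy. The only cosmetic differences are that the paper chooses the extension so that $\nabla\hat f$ is already tangential (making the $\langle A,Y\rangle$ term vanish outright rather than combining with the ambient Hessian via the Gauss identity as you do), and for the second step the paper tests directly in $\R^m$ with $\varphi(x)=\eta(|x|^2)(h+a)^+$ instead of first reducing to weak subharmonicity of $g$ on $\Sf^{m-1}$; both routes yield $\int|\nabla g|^2\le 0$.
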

\begin{proof}
We will split the proof of the lemma into two steps:
\begin{itemize}
\item[claim 1:] for any Dirichlet minimizer $u: \Omega \to \Iq(\N)$, $\Omega \subset \R^m$ open, we have that $f\circ u \colon \Omega \to \R$ is subharmonic in the sense of distributions i.e.
\begin{equation}\label{eq.subharmonic}
\Delta (f\circ u) \ge 0\,;
\end{equation}
\item[claim 2:] any $0$-homogeneous subharmonic function is constant. 
\end{itemize}
The lemma is an immediate consequence of claim 1 and claim 2.

\emph{Proof of claim 1:} Let $\hat{f}$ be any extension of $f$ to $\R^N$ such that $\hat{f}$ is $C^2$ (for instance, take $\hat{f}(p) := \phi(p) f(\Pi(p))$, where $\Pi(p): \mathbf{U}_{\delta}(\N) \to \N$ is the nearest point projection from a $\delta$-tubular neighborhood $\mathbf{U}_{\delta}(\N)$ and $\phi$ is a non-negative smooth bump function supported in $\mathbf{U}_{\delta}(\N)$ and constantly equal to $1$ in a small neighborhood of $\N$). Observe that for every $p \in \N$ we have $\nabla^2 f = (D (D\hat{f})^{T_p} )^{T_p} $, where $v^{T_p}$ denotes the orthogonal projection of $v$ onto $T_p\N$. In order to deduce the claim, let $\varphi = \varphi(x) \in C^1_{c}(\Omega)$ non-negative be given and define the vector field 
\[ Y(x,p):= \varphi(x) \nabla \hat{f} (p)= \varphi(x) (D \hat{f}(p))^{T_{\Pi(p)}}\,. \]
The outer variation formula \eqref{outer_variation_formula} provides now
\begin{align*} 0& = \int_\Omega \sum_{i=1}^{m} \sum_{\ell=1}^Q \left( \langle D_iu_\ell , \nabla \hat{f}(u_\ell) \rangle D_i \varphi + \langle D_i u_\ell, D \nabla \hat{f} \cdot D_i u_\ell \rangle \varphi \right) \\
&= \int_\Omega \sum_{i=1}^{m} \left( D_i (f \circ u) D_i \varphi + \sum_{\ell=1}^Q \nabla^2f(u_\ell)(D_iu_\ell, D_iu_\ell) \varphi \right).
\end{align*}
In the last line we have used that $D_iu_\ell \in T_{u_\ell}\N$ and so $\langle D_i u_\ell, D \nabla \hat{f} \cdot D_i u_\ell \rangle = \nabla^2f(u_\ell)(D_iu_\ell, D_iu_\ell)$. Since the last term is non-negative we deduce the claim: 
\[ \int_{\Omega} \langle D (f\circ u),  D \varphi \rangle  \le 0 \quad \text{ for all } \varphi \in C^1_c(\Omega), \varphi \ge 0.\]

\emph{Proof of claim 2:}
Let $h \in W^{1,2}(\R^m)$ be $0$-homogeneous and subharmonic in the sense of distributions i.e.
\begin{equation}\label{eq:claimsubharmonic}
\int \langle Dh, D \varphi \rangle \le 0 \quad \text{ for all } \varphi \in C^1_c(\R^m), \varphi \ge 0.
\end{equation}
Suppose $h$ is not constant. Then there exists $a >0$ such that $h$ is not constant on the super-level set $\{ x \colon h(x) \ge - a\}$, which in turn implies $(h+a)^+$ is not constant. Take any non-negative $\eta(t), \eta(t)=0$ for $t>R$ (possibly a smooth approximation of $(R-t)^+$), and consider the test function $\varphi(x) = \eta(\abs{x}^2) (h+a)^+$ in \eqref{eq:claimsubharmonic}. Observe that $D_i \varphi = \eta(\abs{x}^2) D_i (h+a)^+  + \eta'(\abs{x}^2) 2 x^i (h+a)^+$. But $\sum_{i} D_ih(x) x^i= 0$ for a.e. $x$ in $\R^m$ because $h$ is homogeneous. Hence we deduce 
\[ 0 \ge \int \abs{D(h+a)^+}^2 \eta(\abs{x}^2).\]
But this contradicts the assumption that $(h+a)^+$ is not constant.
\end{proof}

\subsection{Proof of Theorem \ref{thm.nonpositive=continuous}}\label{subsec.simply connected situation}
In this subsection we conclude the proof of Theorem \ref{thm.nonpositive=continuous}. Recall that the hypotheses on $\N$ imply by the Cartan-Hadamard Theorem that $\exp_p: T_p\N \to \N$ is a covering map for every $p \in \N$. Furthermore, since $\N$ is assumed to be smooth we have $\dist_{\N}(q,p) = \abs{\exp_p^{-1}(q)}$. As a further consequence we deduce that for each $p$ the map $q \mapsto d^2_p(q):=\dist_{\N}(q,p)^2$ is smooth. By the second variation formula for length we deduce that $\nabla^2 d^2_p \ge 0$.\\ 
\begin{proof}[Proof of theorem \ref{thm.nonpositive=continuous}]
Again we split the proof in two parts:
\begin{enumerate}
\item[claim 1:] every $0$-homogeneous and locally minimizing $u: \R^m \to \Iq(\N)$ is constant;
\item[claim 2:] claim 1 implies that every locally minimizing map $u \in W^{1,2}(\Omega, \A_{Q}(\N))$ is continuous. 
\end{enumerate}
Obviously claim 2 is equivalent to the theorem. Let us first show how claim 2 follows from claim 1:\\
\emph{Proof of claim 2:} Let $u \in W^{1,2}(\Omega, \A_{Q}(\N))$ be locally energy minimizing, and suppose by contradiction that $\sing_{H}(u)\neq \emptyset $. Due to the characterization of the H\"older regular set by means of the tangent maps \cite[Lemma 6.1]{Hir16}, there is $y \in \sing_{H}(u)$ with a non-constant tangent map $T^u_y$ at $y$. But every tangent map is $0$-homogeneous and locally minimizing, and thus constant by claim 1. This is the required contradiction.\\ 
\emph{Proof of claim 1:} Let $u \in W^{1,2}(\R^m, \A_{Q}(\N))$ be any $0$-homogeneous locally minimizing map. As a consequence of the previous discussion, for every $k > 1$ and $p\in \N$ the function $q \in \N \mapsto f(q):=(d_p(q)^2)^k$ is $C^2$ regular and satisfying $\nabla^2 f \ge 0$ on $T_q\N$ since $ t \mapsto t^k$ is convex. Hence, we can apply lemma \ref{lem.convex=subharmonic} and deduce that for all $p \in \N$, $k >1$
\begin{equation}\label{eq:constant distance} d_p^{2k}\circ u = \sum_{\ell=1}^Q d_p^{2k}(u_\ell) \end{equation}
is constant. To conclude we need the following small algebraic fact, whose proof we postpone and first show the end of the argument.
\begin{lemma}\label{lem.algebraic identity}
Let $\{a_\ell\}_{\ell=1}^Q , \{b_\ell\}_{\ell=1}^Q$ be two families of non-negative real numbers. Suppose that for some sequence $k_i \to \infty$ we have 
\[ \sum_{\ell=1}^Q a_\ell^{k_i} = \sum_{\ell=1}^Q b_\ell^{k_i}\,. \]
Then, $\{a_\ell\}_{\ell=1}^Q = \{b_\ell\}_{\ell=1}^Q$.
\end{lemma}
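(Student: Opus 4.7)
The plan is to argue by induction on $Q$, using the standard fact that the $L^{k}$ norm of a finite collection of non-negative numbers converges, as $k \to \infty$, to the $L^{\infty}$ norm, and to refine this information by extracting also the multiplicity of the maximum.

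First I would reorder both families decreasingly, so that $a_{1} \geq a_{2} \geq \dots \geq a_{Q} \geq 0$ and $b_{1} \geq b_{2} \geq \dots \geq b_{Q} \geq 0$. The key observation is that for any finite family of non-negative reals $\{c_{\ell}\}_{\ell=1}^{Q}$ one has
\[
\max_{\ell} c_{\ell}^{k} \;\leq\; \sum_{\ell=1}^{Q} c_{\ell}^{k} \;\leq\; Q \, \max_{\ell} c_{\ell}^{k},
\]
so that taking the $k$-th root and letting $k = k_{i} \to \infty$ yields
\[
\lim_{i \to \infty} \Bigl( \sum_{\ell=1}^{Q} c_{\ell}^{k_{i}} \Bigr)^{1/k_{i}} \;=\; \max_{\ell} c_{\ell}.
\]
Applying this to both sides of the assumed equality gives $a_{1} = b_{1} =: c$. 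If $c = 0$, all the $a_{\ell}$ and $b_{\ell}$ vanish and we are done; otherwise I would divide the identity through by $c^{k_{i}}$, obtaining $\sum_{\ell} (a_{\ell}/c)^{k_{i}} = \sum_{\ell}(b_{\ell}/c)^{k_{i}}$.

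In this rescaled form every summand is bounded by $1$, and the summands corresponding to indices with $a_{\ell} < c$ (respectively $b_{\ell} < c$) tend to $0$ as $k_{i} \to \infty$, while those with $a_{\ell} = c$ contribute $1$. Passing to the limit therefore shows that the number of indices $\ell$ with $a_{\ell} = c$ equals the number with $b_{\ell} = c$; call this common multiplicity $m$. Then the terms $a_{1}^{k_{i}} = \dots = a_{m}^{k_{i}} = c^{k_{i}} = b_{1}^{k_{i}} = \dots = b_{m}^{k_{i}}$ cancel from both sides of the original identity, and we are left with
\[
\sum_{\ell=m+1}^{Q} a_{\ell}^{k_{i}} \;=\; \sum_{\ell=m+1}^{Q} b_{\ell}^{k_{i}}
\]
for the same sequence $k_{i} \to \infty$, which is the same hypothesis on two shorter families. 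The conclusion follows by the inductive hypothesis, the base $Q = 1$ being immediate (take any single $k_{i}$-th root). No genuine obstacle is expected; the only subtlety is to treat correctly the degenerate case $c = 0$, which as noted kills all remaining terms at once.
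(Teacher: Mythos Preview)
Your proposal is correct and follows essentially the same approach as the paper: induction on $Q$, ordering the families, extracting the common maximum and its multiplicity via the limit $k_i \to \infty$ after dividing by the top value, then cancelling and recursing. The only cosmetic difference is that you first invoke the $L^{k} \to L^{\infty}$ fact to identify $a_1 = b_1$, whereas the paper divides by $a_1^{k_i}$ directly and rules out $b_1 \neq a_1$ by a blow-up/vanishing dichotomy; the substance is the same.
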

In order to conclude the proof, fix any $x,y \in \R^m$ and let $u(x)=\sum_{\ell=1}^Q \a{p_\ell}, u(y)=\sum_{\ell=1}^Q \a{q_\ell}$. For a fixed $p_j$ we have by \eqref{eq:constant distance} that for all $k>1$
\[ \sum_{\ell=1}^Q \dist_\N(p_\ell,p_j)^{2k} =  \sum_{\ell=1}^Q \dist_\N(q_\ell,p_j)^{2k}. \]
But so the lemma \ref{lem.algebraic identity} implies that the number of zeros of the left- and right-hand side are the same. So we conclude that $\#\{ \ell : p_\ell = p_j\} = \#\{\ell \colon q_\ell = p_j\}$. Since $p_j$ was arbitrary we have $u(x)=u(y)$, that is $u$ is constant. \\
It remains to give the proof of the lemma.
\begin{proof}[Proof of lemma \ref{lem.algebraic identity}]
This lemma follows by induction on $Q$. For $Q=1$ the claim is obvious.\\
Suppose the claim is proven for $Q'<Q$. We may assume that the families are ordered, i.e. $a_1\ge a_2 \ge \dotsb \ge a_Q$ and $b_1 \ge b_2 \ge \dotsb b_Q$. If $a_1 =0 $ the claim follows. Hence we may assume $a_1>0$. The hypothesis implies that for all $k_i$
\[ \sum_{\ell=1}^Q \left(\frac{a_\ell}{a_1}\right)^{k_i} = \sum_{\ell=1}^Q \left(\frac{b_\ell}{a_1}\right)^{k_i}.\]
If we consider the limits for $k_i \to \infty$ we deduce that the LHS converges to $\#\{ \ell \colon a_\ell = a_1 \}$. If $b_1 >a_1$, the RHS converges to $+ \infty$. If $b_1< a_1$, on the other hand, the RHS converges to $0$. Hence, $b_1 = a_1$. Furthermore the RHS converges therefore to $\# \{ \ell \colon b_\ell = b_1 = a_1 \}$ which must be the same number as for the family $\{a_\ell\}_{\ell=1}^Q$. Hence we conclude that the assumption can now be written as 
\[ \sum_{\ell \colon a_\ell =a_1} a_1^{k_i} + \sum_{ \ell \colon a_\ell \neq a_1} a_\ell^{k_i} = \sum_{\ell \colon b_\ell =a_1} b_1^{k_i} + \sum_{ \ell \colon b_\ell \neq a_1} b_\ell^{k_i}. \]
As we have just shown the first sum on the left agrees with the first sum on the right, hence we deduce equality for the second sums for all $k_i$. The lemma follows now by induction hypothesis.  
\end{proof}
\end{proof}

\subsection{The improved result when $Q=1$}\label{subsec.general situation}
Although it is a known result we want to give a short proof of how the previous implies the following theorem. The important fact to remark is that for the single valued case the topology of the target does not play a role. 
 
\begin{theorem}\label{thm.nonpositive=continuous.Q=1}
Let $\N$ be a complete, connected manifold all of whose sectional curvatures are non-positive. Then, every locally energy minimizing map $u \in W^{1,2}(\Omega, \N)$ is smooth.
\end{theorem}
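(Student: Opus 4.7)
The plan is to reduce Theorem \ref{thm.nonpositive=continuous.Q=1} to the simply-connected case treated in Theorem \ref{thm.nonpositive=continuous} by passing to the universal Riemannian cover. Let $\pi\colon \tilde\N\to \N$ be the universal cover equipped with the pullback metric: then $\tilde\N$ is complete, simply connected, and all its sectional curvatures are non-positive, since $\pi$ is a local isometry. Consequently Theorem \ref{thm.nonpositive=continuous} applies to locally Dirichlet minimizing single-valued maps with values in $\tilde\N$, and because $\pi$ is a local isometry it preserves the Dirichlet energy and sets up a local bijection between competitors, so that any continuous lift $\tilde u=\pi^{-1}\circ u$ defined on a simply-connected subdomain is itself a local minimizer into $\tilde\N$.

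Fix an arbitrary $x_0 \in \Omega$ and a small ball $B=B_r(x_0)\Subset \Omega$. The goal is to produce a lift of $u|_B$ to $\tilde\N$ that lies in $W^{1,2}(B,\tilde\N)$. The $Q=1$ case of the partial regularity result (equivalently, the classical Schoen--Uhlenbeck theorem \cite{SU}) gives $\dim_{\mathcal H}(\sing_H(u))\le m-3$; for $m\le 2$ this already forces $\sing_H(u)=\emptyset$, so from now on I assume $m\ge 3$. Under this hypothesis $\sing_H(u)\cap B$ is closed of Hausdorff codimension at least three inside the simply-connected ball $B$, hence $B\setminus\sing_H(u)$ is open, connected, and simply connected. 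On this open set $u$ is smooth, so the standard covering-space lifting criterion produces a smooth $\tilde u\colon B\setminus\sing_H(u)\to\tilde\N$, unique up to a deck transformation, satisfying $|D\tilde u|=|Du|$ pointwise.

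The delicate step, and the main obstacle in the plan, is to extend $\tilde u$ to a $W^{1,2}(B,\tilde\N)$ map. The Dirichlet bound $\int_{B\setminus\sing_H(u)}|D\tilde u|^2=\int_B|Du|^2<\infty$ is automatic, but one still has to prevent $\tilde u$ from escaping to infinity in $\tilde\N$ along sequences converging to $\sing_H(u)$. Here I would invoke Lemma \ref{lem.convex=subharmonic} in the simply-connected target $\tilde\N$: for any $\tilde p\in\tilde\N$ the function $f=d_{\tilde p}^2$ is smooth and convex on $\tilde\N$ by Cartan--Hadamard, and the proof of claim~1 in that lemma shows that $f\circ\tilde u$ is smooth and subharmonic on $B\setminus\sing_H(u)$. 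By choosing $r$ small enough that $u(\partial B)$ is contained in an evenly covered small ball in $\N$ so that the boundary lift has bounded image, the maximum principle yields an a priori $L^\infty$ bound for $\tilde u$ on $B\setminus\sing_H(u)$; since $\sing_H(u)$ has vanishing $2$-capacity (being of Hausdorff codimension at least three), the standard $W^{1,2}$-removable-singularity theorem for Sobolev maps into bounded regions of a complete target then provides the extension $\tilde u\in W^{1,2}(B,\tilde\N)$, and its minimality is inherited from that of $u$ via the local correspondence between competitors under $\pi$. Applying Theorem \ref{thm.nonpositive=continuous} to $\tilde u$ gives $\sing_H(\tilde u)=\emptyset$, hence $\tilde u$ is continuous, and then smooth by interior regularity for continuous harmonic maps between smooth manifolds; finally $u=\pi\circ\tilde u$ is smooth at $x_0$, contradicting $x_0 \in \sing_H(u)$ unless $\sing_H(u) = \emptyset$. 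The hardest part is the orchestration of the three ingredients above — the maximum-principle bound for the lift, the removable-singularity extension, and the preservation of minimality — none of which is individually difficult but whose combination requires some care.
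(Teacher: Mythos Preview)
Your approach of lifting $u|_B$ across the singular set to the universal cover differs from the paper's, which works instead at the level of tangent maps: it argues by induction on $m$ that every $0$-homogeneous minimizer $\R^m\to\N$ is constant, by first establishing (via the induction hypothesis applied to tangent maps with an extra line of symmetry) that its restriction to $\Sf^{m-1}$ is continuous, and only then lifting this continuous sphere map to $\tilde\N$. The lift is then automatically bounded by compactness of $\Sf^{m-1}$, its $0$-homogeneous extension is minimizing into $\tilde\N$, and the simply connected case finishes the job.

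Your direct-lifting argument has a genuine gap in the maximum-principle step. First, the assertion that ``choosing $r$ small enough makes $u(\partial B)$ lie in an evenly covered small ball'' is false at a singular point: $u$ is not continuous at $x_0$, and $u(\partial B_r(x_0))$ need not shrink with $r$ (for a $0$-homogeneous tangent map it is independent of $r$). What is true is that for generic $r$ one has $\partial B_r\cap\sing_H(u)=\emptyset$, so $u|_{\partial B_r}$ is smooth and, $\partial B_r$ being simply connected, the lift $\tilde u|_{\partial B_r}$ exists and is bounded by compactness. But this alone is not enough: $d_{\tilde p}^2\circ\tilde u$ is subharmonic only on $B\setminus\sing_H(u)$, and the maximum principle on that domain sees $\sing_H(u)$ as an additional boundary component where the function could a priori blow up. To break the circularity you would have to show first that $d_{\tilde p}\circ\tilde u\in W^{1,2}_{\rm loc}(B)$ (e.g.\ via an ACL argument using that almost every line misses the codimension-three singular set and that $|D(d_{\tilde p}\circ\tilde u)|\le|Du|\in L^2(B)$), then extend the distributional subharmonicity across the zero-capacity set, and only then conclude local boundedness by the mean value inequality. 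This is repairable but is not the one-line maximum principle you invoke; the paper's tangent-map route sidesteps the issue entirely by lifting only after continuity on the sphere has been secured.
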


\begin{proof}
It is classical that every continuous harmonic map is smooth, hence it is sufficient to prove the continuity of the harmonic map.
We will show it by induction on the dimension $m$ of the base space $\Omega \subset \R^m$.
In fact, we will proceed similarly to the simply connected situation:
\begin{itemize}
\item[claim 1:] every $0$-homogeneous locally energy minimizer $u: \R^m \to \N$ is constant; 
\item[claim 2:] every locally energy minimizing map $u \in W^{1,2}(\Omega, \N)$ is continuous. 
\end{itemize}
\emph{Proof of claim 1:} Assume claim 1 is proven for $m'<m$. In a first step we want to show that the map $\left. u \right|_{\Sf^{m-1}}$ is continuous. For $m\le 3$ this holds true since $\mathcal{H}^{m-2}(\sing(u)) = 0$, \cite[Lemma 1 section 2.10]{Simon96}.
Now let $u: \R^m \to \N$ be $0$-homogeneous and energy minimizing, but suppose by contradiction that when restricted to the sphere $\Sf^{m-1}$ $u$ is not continuous, i.e. $\sing(u)\cap \Sf^{m-1} \neq \emptyset$. Hence we can find $y \in \sing(u)\cap \Sf^{m-1}$ at which there is a tangent map $T$ with at least one line of symmetry, i.e. such that for some $z \in \R^m$ one has $T(x+\lambda z)= T(x)$ for all $\lambda \in \R$, for all $x$. But this implies that $T$ is a locally energy minimizing $0$-homogeneous map from $\R^{m-1}$ to $\N$. By induction hypothesis $T$ must be constant. Hence $\sing(u)\cap \Sf^{m-1} = \emptyset$. \\
We have thus concluded that $v:=\left.u\right|_{\Sf^{m-1}} :\Sf^{m-1} \to \N$ is continuous and so smooth. Let $P: \tilde{\N} \to \N$ be an isometric covering map e.g. we can take $P=\exp_p: T_p\N \to \N$ by the Cartan-Hadamard Theorem. Since $\Sf^{m-1}$ is simply connected we have that $u_*(\pi_1(\Sf^{m-1})) \subset P_*(\pi_1(\R^n))$ and hence there exists a lift $\tilde{v}: \Sf^{m-1} \to \tilde{\N}$ of $v$, that is with $P \circ \tilde{v} = v$, compare \cite[Proposition 1.33]{Hatcher02}. The $0$-homogeneous extension $\tilde{u}(x):=\tilde{v}(\frac{x}{\abs{x}})$ must be locally energy minimizing since $P$ is isometric (indeed, if $\tilde{w}$ is a local competitor for $\tilde{u}$ then $w:=P\circ \tilde{w}$ is a local competitor for $u$, and $\int_{\Omega} \abs{Dw}^2 = \int_{\Omega} \abs{D\tilde{w}}^2$; hence, $\tilde{u}$ must be locally minimizing if $u$ is). But as proven in the simply connected situation every $0$-homogeneous locally energy minimizing map $\tilde{u}: \R^m \to \tilde{\N}$ is constant, compare claim 1 in subsection \ref{subsec.simply connected situation} with $Q=1$. This shows the claim.\\
\emph{Proof of claim 2:} Assume $\sing(u) \neq \emptyset$. Hence we can find $y \in \sing(u)$ at which there is a non-trivial tangent map $T$. But the existence of $T$ is ruled out by claim 1.  
\end{proof}

\subsection{\texorpdfstring{$Q$}{Q}-valued counterexample}\label{subsec:Q-valued counterexample}
In this subsection we want to present an example that the continuity fails for $Q$-valued functions if the target is not simply connected. Due to the results in subsection \ref{subsec.simply connected situation} we already know that the reason must be of topological nature.

\begin{proposition}\label{prop:counterexample to continuity into nonpositive curved spaces}
There is a $2$-valued Dirichlet minimizing map $u$ from $B^3 \subset \R^3$ into the flat torus $\mathbb{T}^2=\C/\Z^2$ with the property that $u|_{\Sf^2}$ is Lipschitz continuous, $\sing_{H}(u) \Subset B_3$ and $\sing_{H}(u) \neq \emptyset$.
\end{proposition}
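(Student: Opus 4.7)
The plan is to construct the required minimizer by prescribing a Lipschitz continuous boundary datum $v\colon \Sf^2 \to \A_2(\mathbb{T}^2)$ that is topologically non-trivial, then take $u$ to be a minimizer of the Dirichlet energy among all $W^{1,2}$-extensions of $v$ to $B^3$, and finally derive $\sing_H(u)\neq\emptyset$ from a topological obstruction argument.

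For the construction of the boundary datum, I would exploit the fact that $\A_2(\mathbb{T}^2)$ can be described as pairs of points in $\mathbb{T}^2$ up to permutation, and the unique non-trivial double cover of the twice-punctured sphere $\Sf^2\setminus\{N,S\}$ (whose $\pi_1 \cong \Z$). Concretely, fix the generating geodesic $\gamma(s) = (s,0) \bmod \Z^2$ of $\mathbb{T}^2$ and near the equator define $v(\theta,\phi) := \llbracket \gamma(\phi/(4\pi))\rrbracket + \llbracket \gamma(\phi/(4\pi)+1/2)\rrbracket$, so that going once around the equator the two sheets undergo a swap while the total image traces one generator of $\pi_1(\mathbb{T}^2)$. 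I would then interpolate $v$ Lipschitz-continuously to the two poles so that the two sheets collide precisely at $N$ and $S$, producing two branch points on $\Sf^2$. Equivalently, $v = \sigma_*\tilde v$ for a double branched cover $\sigma\colon \tilde{\Sf}^2 \to \Sf^2$ ramified at $\{N,S\}$ together with a Lipschitz $\tilde v\colon \tilde{\Sf}^2 \to \mathbb{T}^2$ whose image realizes a non-trivial class of $\pi_1(\mathbb{T}^2)$.

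Given such $v$, existence of a minimizer $u\in W^{1,2}(B^3,\A_Q(\mathbb{T}^2))$ with trace $v$ follows by the standard direct method using the lower semicontinuity and compactness of the Dirichlet energy in the $Q$-valued setting. Because $v$ is Lipschitz, the boundary version of Theorem \ref{Q_eps_reg} gives Hölder continuity of $u$ in a one-sided neighborhood of $\Sf^2$, so $\sing_H(u) \Subset B^3$. Thus the only point left is to show $\sing_H(u)\neq \emptyset$, for which I argue by contradiction: if $u$ were everywhere H\"older continuous, then $u$ would be a continuous $2$-valued extension of $v$ across $B^3$. Away from the branch locus of $u$ (which, by Theorem \ref{th_appetizer}, is $0$-dimensional, hence totally disconnected and therefore cannot separate $B^3$), one could locally lift $u$ to $\A_2(\R^2)$ using the universal cover $\R^2 \to \mathbb{T}^2$, and concatenation along paths would produce a well-defined global monodromy representation of $\pi_1(B^3\setminus\sing_H(u)) = 0$ into $\Z^2 \wr \Sigma_2$. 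The trace of this trivial monodromy on $\Sf^2$ must agree with the monodromy of $v$ around the equator, but the latter is non-trivial — it swaps the two sheets and picks up the lattice shift $(1,0)\in \Z^2$ — giving the desired contradiction.

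The main obstacle is making the topological obstruction rigorous for $W^{1,2}$ (not smooth) $2$-valued maps: one must control the branch locus of a hypothetically continuous minimizer sufficiently to lift $u$ locally to $\A_2(\R^2)$ and propagate the lift along paths, which requires knowing that the branch set has codimension at least $2$ and is topologically tame enough that $B^3\setminus\sing_H(u)$ is simply connected. Here is where the structure theorem of the paper enters crucially: since $\sing_H(u)$ is discrete by Theorem \ref{th_appetizer} applied in $m=3$, $B^3\setminus \sing_H(u)$ is indeed simply connected, so the monodromy argument goes through and produces the required contradiction.
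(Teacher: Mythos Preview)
The proposal has a genuine gap in the construction of the boundary datum. A degree-two branched cover of $\Sf^2$ with only two branch points is again a sphere (Riemann--Hurwitz gives $\chi(\tilde{\Sf}^2)=2\cdot 2-2=2$), and any continuous map $\Sf^2\to\mathbb{T}^2$ is null-homotopic; so your $\tilde v$ cannot ``realize a non-trivial class of $\pi_1(\mathbb{T}^2)$'' as you require. Equivalently, your equatorial loop $\phi\mapsto\llbracket\gamma(\phi/4\pi)\rrbracket+\llbracket\gamma(\phi/4\pi+1/2)\rrbracket$ is non-trivial in $\pi_1(\A_2(\mathbb{T}^2))\cong\Z^2$: under the sum map $\mathrm{Sym}^2(\mathbb{T}^2)\to\mathbb{T}^2$ it goes to $\phi\mapsto(\phi/2\pi+1/2,0)$, representing $(1,0)\neq 0$, so it cannot be filled by a continuous $v$ over either hemisphere. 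With two branch points the boundary datum you describe simply does not exist, and any $v$ that does arise from a two-fold cover with two branch points extends continuously over $B^3$, leaving no topological obstruction. The paper's construction uses \emph{four} branch points precisely so that the covering surface $\mathcal V$ is a torus; the diffeomorphism $\Phi:\mathcal V\to\mathbb T^2$ then has degree one, and the obstruction becomes the non-vanishing of the push-forward current $v_\sharp\llbracket\Sf^2\rrbracket=\Phi_\sharp\llbracket\mathcal V\rrbracket\neq 0$.

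There is a second gap in the obstruction step. You invoke Theorem~\ref{th_appetizer} to assert that the \emph{branch locus} of a hypothetically continuous minimizer is $0$-dimensional, but that theorem bounds $\sing_H(u)$, which is empty under your contradiction hypothesis; it says nothing about the coincidence set $\{x:\#\spt(u(x))<2\}$, which for a continuous $2$-valued map into a flat target can certainly be $2$-dimensional. Moreover the map $\A_2(\R^2)\to\A_2(\mathbb T^2)$ you propose to lift along is not a covering (it is branched over the diagonal), so the ``monodromy in $\Z^2\wr\Sigma_2$'' is not obtained by standard covering-space theory. The paper sidesteps both issues by arguing with integral currents: if $u$ were continuous, one approximates it by a Lipschitz map constant near the origin, and then $\partial(u_\sharp\llbracket B^3\rrbracket)=u_\sharp\llbracket\Sf^2\rrbracket$ together with the constancy theorem forces $v_\sharp\llbracket\Sf^2\rrbracket=0$, contradicting the degree-one construction.
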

\begin{proof}
The construction of the example proceeds as follows:
\begin{enumerate}
\item we present an explicit example of a branched covering $\pi: \mathcal{V} \to \Sf^2$, where $\mathcal{V}$ is a torus. $\mathcal{V}$ is constructed as a complex variety in $\hat{\C}\times \hat{\C}$;
\item using $\pi$ we construct a $2$-valued, Lipschitz continuous map $v$ from $\Sf^2$ into the flat torus $\mathbb{T}^2=\C/ \Z^2$ with finite energy;
\item let $u$ be a minimizer of the Dirichlet energy with respect to $g(x):= v(\frac{x}{\abs{x}})$. We will show that $u$ cannot be continuous.
\end{enumerate}
Let us now present the details to the outlined steps:\\

\emph{step 1:} Let $\hat{\C}$ be the Riemann sphere. We fix two non zero, unequal complex numbers $a,b$ and define the meromorphic function $m(z):= z \frac{z-a}{z-b}$. Consider the complex variety \[ \mathcal{V}:=\left\lbrace (w,z) \in \hat{\C} \times \hat{\C} \colon w^2 = z \frac{z-a}{z-b} \right\rbrace .\]
Consider the projection $\pi: \hat{\C}\times \hat{\C} \to \hat{\C}$ onto the second component. Restricted to $\mathcal{V}$, we obtain a ramified covering map
\[ \pi: \mathcal{V} \to \hat{\C}. \]
The map $\pi$ by definition is a two valued covering with ramification points in $P_1=(0,0)$, $P_2=(0,a)$, $P_3=(\infty, b)$ and $P_4=(\infty, \infty)$.
We claim that $\pi$ takes the form $\pi(\zeta)=\zeta^2$ at each of the ramifications points $P_i$. Furthermore this implies that $\mathcal{V}$ is smoothly embedded, i.e. does not have any singular points.\\
Set $p_1=0=p'_4\,, p_2=a\,, p'_3=\frac{1}{b}$ ($p_3=b=\frac{1}{p'_3}\,, p_4=+\infty= \frac{1}{p_4'}$).\\
At $P_1, P_2$ we have $m(z) = (z-p_i) h_i(z-p_i)$ with $h_i$ holomorphic in a neighborhood $U_i$ of $0$ and $h_i(0) \neq 0$. We deduce that $\varphi_i(z):=(z-p_i) h_i(z-p_i)=m(z)$ is locally a holomorphic diffeomorphism between $p_i+U_i$ and a neighborhood $V_i\subset \C$ of $0$. Now it is straightforward to check that
\[\Phi_i: \zeta \in V_i \mapsto (\zeta, \varphi_i^{-1}(\zeta^2)), \]
is a local  parametrization of $\mathcal{V}$ around $P_i$, i.e. $\varphi_i\circ \pi \circ \Phi_i(\zeta) = \zeta^2$. Changing $U_i$ we may assume that $V_i= \mathbb{D}_{r_i}$ for each $i=1,2$, where $\mathbb{D}_{r}$ is the disc centered at $0 \in \C$ with radius $r$. Furthermore since $\Phi_i$ is a smooth regular map $P_i$ is not a singular point of $\mathcal{V}$.
 
To analyze the ramification points $P_3,P_4$ we use the inversion $I: \hat{\C} \to \hat{\C}$ with $I(z)=\frac{1}{z}$. Observe that $(w,z) \in \mathcal{V}$ if and only if $(w'=I(w), z'=I(z))$ is a solution of $(w')^2 = m'(z')$ with $m'(z)= I\circ m \circ I = \frac{b}{a} \,z'\frac{z'-\frac{1}{b}}{z'-\frac{1}{a}}$ or
 \[ I(\mathcal{V})=\left\lbrace (w',z')  \in \hat{\C} \times \hat{\C} \colon w'^2 =\frac{b}{a} \, z' \frac{z'-\frac{1}{b}}{z'-\frac{1}{a}}\right\rbrace. \]
Now we can argue for $P_3, P_4$ as for $P_1, P_2$ interchanging $p_1, p_2$ with $p'_4$ and $p'_3$ (and denote with $U_i'$, $i=3,4$ the related neighborhoods of $0$).  As a conclusion we can apply the Riemann-Hurwitz formula, 
and obtain 
\[ \chi(\mathcal{V}) = - 4 + \sum_{i=1}^4 (2-1) = 0.\]
Hence $\mathcal{V}$ is a torus.  \\

\emph{step 2:}
In the following we equip $\mathcal{V}$ with the pullback metric ${\bf g}:=\iota^*\delta$ of its immersion $\iota: \mathcal{V} \hookrightarrow \hat{\C} \times \hat{\C}$. Observe that the metric ${\bf g}$ is compatible with the conformal structure considered in step 1. \\
The construction of $v$ will be done in two steps. First, since $\pi: \mathcal{V} \to \hat{\C}$ is a branched conformal covering of degree two there is a natural way to define $2$-valued maps with finite energy. These maps are not Lipschitz continuous, in fact only $C^{0,\frac12}$, but we are able to find a Lipschitz continuous map with similar properties nearby.

Let $f: \mathcal{V} \to \N$ be any smooth function from the Riemann surface $\mathcal{V}$ into a manifold $\N$. We define a two valued map $u=u_f \colon \hat{\C} \to \A_{2}(\N)$ using the branched covering map $\pi: \mathcal{V} \to \hat{\C}$ as follows 
\[ u(z) := \sum_{ P \in \pi^{-1}(z)} \a{f(P)}, \]
counting multiplicities i.e. $u(p_i)=2\a{f(P_i)}$ for $i=1,\cdots, 4$.\\ We claim that $u \in W^{1,2}(\Sf^2,\I{2}(\N))$ with 
\begin{equation}\label{eq:Dirichlet energy of a branched covering} \int_{\Sf^2} \abs{\nabla u}^2 = \int_{\mathcal{V}} \abs{\nabla f}^2. \end{equation}
Let $\gamma$ be a smooth path connecting $p_1,p_2,p_3,p_4$. We obtain a simply connected domain $\Omega\subset \C$ setting
\[\Omega := \hat{\C} \setminus \left( \bigcup_{i=1,2} (p_i+U_i) \cup \bigcup_{i=3,4} I( p_i' + U'_i) \cup \gamma\right)\,. \]
Hence there exist two holomorphic maps $\psi_i: \Omega \to \pi^{-1}(\Omega)$ with $\psi_1(\Omega)\cup \psi_2(\Omega)= \pi^{-1}(\Omega)$ such that 
\[ u(z) = \a{f\circ \psi_1} + \a{f\circ \psi_2} \quad \mbox{for every } z \in \Omega\,. \]
Since the Dirichlet energy is conformally invariant (cf. \cite[Lemma 3.12]{DLS11a}), we have
\[ \int_{\Omega} \abs{\nabla u}^2 = \int_{\pi^{-1}(\Omega)} \abs{\nabla f}^2. \]
Now we consider a ramification point, for instance $P_1$ and the related neighborhood $p_1 + U_1$. Using the previously introduced parametrization $\Phi_1$ we have 
\[ u\circ \varphi_1^{-1}(\zeta) = \a{ f\circ\Phi_1(\zeta^{\frac12})} + \a{ f\circ\Phi_1(-\zeta^{\frac12})}. \]
The maps $\zeta\in \mathbb{D}_{r_1^2} \mapsto \pm \zeta^{\frac12}$ both together parametrize $\mathbb{D}_{r_1}$. Hence, as before, due to the conformal invariance of Dirichlet energy we obtain 
\[ \int_{\varphi_1^{-1}(\mathbb{D}_{r_1^2})} \abs{\nabla u}^2 = \int_{\Phi_1(\mathbb{D}_{r_1})} \abs{\nabla f}^2. \]
Summing up all the pieces and using that $\Ha^2(\gamma)=0$ we obtain \eqref{eq:Dirichlet energy of a branched covering}.\\
By step 1 $\mathcal{V}$ is a smoothly embedded torus in $\hat{\C}\times \hat{\C}$; hence, there exists a smooth diffeomorphism $\Phi: \mathcal{V} \to \mathbb{T}^2$. Apply the above construction with the specific choice $f=\Phi$ to obtain
\[ \tilde{v}(z):= \sum_{P \in \pi^{-1}(z) } \a{ \Phi(P) } \in W^{1,2}(\hat{\C}, \I{2}(\mathbb{T}^2)). \]
It remains to show that there is $v \in \Lip(\hat{\C}, \I{2}(\mathbb{T}^2))$ nearby. This will be a consequence of the following approximation lemma:

\begin{lemma}\label{lem.approximationbylipschitzfunctions}
Given $w \in W^{1,2}(\Omega, \Iq(\N))\cap C^0(\Omega, \Iq(\N))$, for every $\Omega' \Subset \Omega$ there exists $w_j \in W^{1,2}(\Omega, \Iq(\N))\cap C^0(\Omega, \Iq(\N))$ with
\begin{align*} w_j \in \Lip(\Omega', \Iq(\N)); &\quad w_j = w \text{ in a neighborhood of } \partial \Omega\\
\norm{\G(w_j,w)}_{L^\infty(\Omega')} \to 0; &\quad \int_{\Omega'} \abs{D w_j}^2 \to \int_{\Omega'} \abs{D w}^2 \text{ as } j \to \infty.\end{align*}
\end{lemma}

Before coming to the proof of this lemma let us present how to conclude. 
Apply the lemma to the $0$-homogeneous extension of $\tilde{v}$ in the annulus $\Omega := B^{3}_{2}(0) \setminus B^{3}_{\frac{1}{4}}(0)$ to obtain an approximating sequence $v_j \in W^{1,2}(B^{3}_2(0) \setminus B^{3}_{\frac{1}{4}}(0), \A_{2}(\mathbb{T}^2))\cap \Lip(B^{3}_{\frac32}(0) \setminus B^{3}_{\frac{1}{2}}(0), \A_{2}(\mathbb{T}^2))$. 
Choosing $j$ sufficiently large we can guarantee that for every $p \in \mathbb{T}^2\setminus \bigcup_{i=1}^4 B_{2^{-2017}}(\Phi(P_i))$ there is precisely one $z \in \hat{\C} \simeq \partial B^{3}_{1}(0)$ with $p \in \spt(v_j(z))$.  Now fix such $j$ sufficiently large and set $v := \left. v_{j} \right|_{\hat{C}}$. The $0$-homogeneous extension of $v$ i.e. $g(x):=v(\frac{x}{\abs{x}})$ for $x \in B_1\subset \R^3$ is an element of $W^{1,2}(B_1, \A_{2}(\mathbb{T}^2))$ and Lipschitz continuous outside of $0$. Now we may apply the direct method to obtain a Dirichlet minimizing map $u: B_1 \to \A_{2}(\mathbb{T}^2)$ with $u|_{\Sf^2}=g|_{\Sf^2}$, compare \cite[Theorem 0.8]{DLS11a}.\\

\begin{proof}[Proof of Lemma \ref{lem.approximationbylipschitzfunctions}] Since $\N \hookrightarrow \R^N$ smooth isometrically there exists a smooth nearest point projection $\Pi: \mathbf{U}_{\delta}(\N) \to \N$ for some $\delta>0$. Let $\pmb{\xi}_{BW} : \Iq(\R^N) \to \R^M$ be the locally isometric "improved" Almgren/B. White embedding of $\A_{Q}(\R^N)$, cf. \cite[Section 2]{DLS11a}. We will denote with $\pmb{\rho}_{BW}: \R^M \to \Iq(\R^N)$ the related Lipschitz retraction, satisfying $\pmb{\rho}_{BW}\circ \pmb{\xi}_{BW} = \mathrm{id}$ on $\Iq(\R^N)$,  \cite[Corollary 2.2]{DLS11a}.\\
Since $w$ is assumed to be continuous, there exists $\tilde{w}_j$ with $\tilde{w}_j \to \pmb{\xi}_{BW} \circ w$ in $L^\infty(\Omega, \R^M)\cap W^{1,2}(\Omega, \R^M)$, $\tilde{w}_j \in \Lip(\Omega', \R^M)$ for every $\Omega' \Subset \Omega$, and $\tilde{w}_j= \pmb{\xi}_{BW} \circ w$ in a neighborhood of $\partial \Omega$. For instance, one may take $\tilde{w}_j = (1-\theta)\; \pmb{\xi}_{BW} \circ w + \theta\; \eta_{\e_j} \star (\pmb{\xi}_{BW} \circ w)$, for an appropriate cut-of-function $\theta$ and a sequence of mollifiers $\eta_{\e_j}$.\\
Since $\pmb{\rho}_{BW}$ is a Lipschitz-retraction and $\pmb{\xi}_{BW}$ is a local isometry we conclude that the sequence
\[ \hat{w}_j:= \pmb{\rho}_{BW} \circ \tilde{w}_j : \Omega \to \Iq(\R^N) \]
has the claimed properties up to the fact that $\hat{w}_j$ does not necessarily take values in $\N$. But for sufficient large $j$ we have $\G(\hat{w}_j(x), w(x)) < \frac12 \delta $ for all $x \in \Omega$ hence 
\[ w_j(x):=\Pi\circ \hat{w}_j(x)= \sum_{\ell=1}^Q \left\llbracket \Pi((\hat{w}_j(x))_{\ell}) \right\rrbracket \]
is well-defined and has all the claimed properties. It is clearly Lipschitz continuous on $\Omega'$ since $\Pi$ is smooth and Lipschitz. The sequence $w_j$ converges uniformly to $w$ since $\Pi$ is the identity on $\N$ and finally 
\[ \int_{\Omega} \abs{\nabla w}^2 \le \liminf_{j \to \infty } \int_{\Omega} \abs{\nabla w_j}^2 \le \liminf_{j  \to \infty} \int_{\Omega} \frac{\abs{\nabla \hat{w}_j}^2}{(1 - \dist(\hat{w}_j(x), \N)C)^2}  = \int_{\Omega} \abs{\nabla w}^2.\]
In the first inequality we used the lower-semicontinuity of the Dirichlet energy, in the second an estimate on the derivative of the nearest point projection $\Pi$, compare \cite[Remark 2.1 (iv)]{Hir16}.
\end{proof}

\emph{step 3:}
That $\sing_{H}(u) \Subset B^3$ follows from the fact that  $u|_{\Sf^2}$ is Lipschitz continuous and a boundary regularity result for $Q$-valued locally energy minimizing maps, which can be obtained from the analogous result of \cite{Hir16b} for ``classical'' $\R^N$-valued $\Dir$-minimizers modulo slight modifications of the arguments: precisely, this is how to proceed in order to obtain the boundary regularity result \cite[Theorem 0.1]{Hir16b} in the manifold valued setting for $s=1$. Only in the proof of Proposition 3.3, one replaces the application of Lemma B.2. to obtain the interpolation $\varphi(k')$ by the application of the $Q$-valued Luckhaus lemma, \cite[Lemma 3.1]{Hir16} to obtain $\varphi(k')$. Due to the $L^\infty$-bound in the Luckhaus lemma one can apply the nearest point projection $\Pi: \mathbf{U}_{\delta}(\N) \to \N$ and obtain an interpolation function $\Pi\circ \varphi(k')$ that satisfies the same bounds.\\

To show that $\sing_{H}(u) \neq \emptyset$ the idea is to use the "degree" of $u|_{\Sf^2}$ to show that $u$ cannot be continuous. We will use the notion of "degree" suggested by the theory of Cartesian currents.
We will need the following fact about push-forwards of integral currents by $Q$-valued proper Lipschitz continuous functions (see, for instance, \cite[Section 1]{DLS13a} or \cite[Section 2]{SS17a}): let $\Omega \subset \R^m$ be open (non necessarily connected) with smooth boundary $\partial \Omega$, $\Sigma \subset \Omega$ any smooth $k$-dimensional surface, and $f \colon \Omega \to \A_{Q}(\N)$ Lipschitz and proper. Then, the following holds:
\begin{itemize}
\item $T:=f_\sharp \a{\Omega}$ is an $m$-dimensional integer rectifiable current in $\N$, $S:=f_\sharp \a{\Sigma}$ is a $k$-dimensional integer rectifiable current in $\N$;
\item it holds $\partial T = f_\sharp \a{\partial \Omega}$.
\end{itemize}
In case $\Omega$ is $3$-dimensional, $\Sigma$ and $\N$ are $2$-dimensional without boundary, the constancy theorem for integral currents implies that
\begin{itemize}
\item[(i)] $T=f_\sharp \a{\Omega} = 0$ since $T$ is a $3$-dimensional current supported in a $2$-dimensional manifold;
\item[(i)] $S=f_\sharp \a{\Sigma}=\theta_\Sigma \a{\N}$ for some $\theta_\Sigma \in \Z$ since $S$ is a $2$-dimensional integer rectifiable current without boundary supported in a $2$-dimensional manifold;
\item[(iii)] the following identity holds true 
\begin{equation}\label{eq:boundarycurrentidentiy}
0 = \partial T = f_\sharp \a{\partial\Omega} = \sum_{j=1}^J \theta_{\Sigma_j} \a{\N}  
\end{equation}
where $\Sigma_j$ are the different components of $\partial \Omega$ i.e. $\partial \Omega = \bigcup_{j=1}^J \Sigma_j$.
\end{itemize}

Now we can conclude \emph{step 3}. Assume by contradiction that $u$ is continuous. First extend $u$ to $B_2$ setting $u(x)=u(\frac{x}{\abs{x}})$ for $\abs{x}>1$. Apply the approximation lemma \ref{lem.approximationbylipschitzfunctions} to $u$ with $\Omega = B_{\frac{3}{2}}$ and $\Omega' = B_1$ to obtain a sequence $u_j \in W^{1,2}(B_{\frac32}, \I{2}(\mathbb{T}^2))$ with $u_j|_{\partial B_{\frac32}}= u|_{\partial B_{\frac32}}$ for all $j$. Since $u$ is Lipschitz continuous on $B_2\setminus \overline{B_1}$ we have that $u_j \in \Lip(B_{\frac32}, \I{2}(\mathbb{T}^2))$. Modifying $u_j$ slightly we can assume that $u_j$ is constant in a small ball $B_r(0)$. This can be achieved for instance by composing $u_j$ with a Lipschitz function of the form
\[ \psi(x) := \begin{cases} x &\text{ for } \abs{x} \ge 2r\\
\frac{ \abs{x} - r}{r} x & \text{ for } r \le \abs{x} < 2r\\
0 & \text{ for } \abs{x}<r. \end{cases} \]
Now consider the set $\Omega=B_{\frac32} \setminus B_{\frac{r}{2}}$ with smooth boundary components $\Sigma_1, \Sigma_2$ given by $\a{\Sigma_1}= \a{\partial B_{\frac32}}$ and $\a{\Sigma_2} = - \a{\partial B_{\frac{r}{2}}}$ in the sense of currents. Since $u_j$ is constant on $B_r$ we have $({u_j})_\sharp\a{\Sigma_2}=0$ by the very definition of push-forward. The identity \eqref{eq:boundarycurrentidentiy} implies that
\[ 0 = ({u_j})_\sharp \a{\Sigma_1} = u_\sharp \a{\partial B_{\frac32}} = u_\sharp \a{\partial B_1}. \]
We used that $u_j=u$ on $\partial B_{\frac32}$ for all $j$ and $u$ is $0$-homogeneous on $B_2\setminus B_1$. But this is a contradiction since $u_\sharp \a{\partial B_1}\neq 0$ by the way $u$ was constructed (compare the choice of the boundary datum in the approximation above). 
\end{proof}

\subsection{Example of a "non-classical" tangent map}\label{subsec:tangentmap}
In this section we want to observe that tangent maps of $Q$-valued locally Dirichlet minimizing maps may have different structures than "classical" one-valued tangent maps.\\
Following the classical scheme we make the following definition:

\begin{definition}\label{def.regular-singular-point}
Let $u \in W^{1,2}(\Omega, \A_{Q}(\N))$ be energy minimizing. A point $x \in \sing_{H}(u)$ is called a regular-singular point if for every tangent map $T$ at $x$ there are classical one-valued tangent maps $T_\ell: \R^m \to \N$, i.e. $0$-homogeneous locally energy minimizing maps, such that 
\[ T = \sum_{\ell=1}^Q \a{T_\ell}. \]
\end{definition}
It is worth noting that every continuity point of a locally energy minimizing map has the property above, by the identification of regular points by the existence of a constant tangent map, \cite[Lemma 6.1 (iii)]{Hir16}.

We will show the following
\begin{proposition}\label{prop.example of tangent map}
Let $u: B_{1}(0) \subset \R^3 \to \I{2}(\mathbb{T}^2)$ be the Dirichlet minimizing map constructed in the previous section. Then, $\sing_{H}(u)$ does not contain any regular-singular point.
\end{proposition}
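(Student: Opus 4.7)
The plan is to argue by contradiction, combining the definition of regular-singular point with the single-valued result already available from Theorem \ref{thm.nonpositive=continuous.Q=1}. Specifically, I would suppose that there exists a regular-singular point $x_0 \in \sing_{H}(u)$, and derive that any tangent map at $x_0$ must have zero density, yielding a contradiction with the $\e$-regularity theorem.

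Concretely, the first step is to pick any tangent map $T$ to $u$ at $x_0$. By Definition \ref{def.regular-singular-point}, $T$ decomposes as
\[
T = \a{T_1} + \a{T_2},
\]
where each $T_\ell \colon \R^3 \to \mathbb{T}^2$ is a classical (single-valued) $0$-homogeneous locally energy minimizing harmonic map. The second step is the key invocation: since $\mathbb{T}^2 = \C / \Z^2$ is a complete, connected Riemannian manifold with identically vanishing (hence non-positive) sectional curvature, Theorem \ref{thm.nonpositive=continuous.Q=1} applies to maps taking values in $\mathbb{T}^2$. More precisely, claim $1$ in its proof asserts that every $0$-homogeneous locally energy minimizing single-valued map from $\R^m$ into such an $\N$ must be constant. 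Applying this to each $T_\ell$ shows that both $T_1$ and $T_2$ are constants, and consequently $T$ itself is a constant $Q$-valued map.

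The third step closes the argument. Since $T$ is constant, $\E(T, B_\rho(0)) = 0$ for every $\rho > 0$, so $\Theta_{T}(0) = 0$. But, as recalled in the paper, any tangent map at $x_0$ satisfies $\Theta_T(0) = \Theta_u(x_0)$; therefore $\Theta_u(x_0) = 0$. By monotonicity of $r \mapsto \E(u, B_r(x_0))$, this means that $\E(u, B_r(x_0)) < \e_0$ for all sufficiently small $r$, where $\e_0$ is the threshold from the $Q$-valued $\e$-regularity Theorem \ref{Q_eps_reg}. That theorem then forces $u$ to be H\"older continuous in a neighborhood of $x_0$, i.e., $x_0 \in \reg_{H}(u)$, contradicting $x_0 \in \sing_{H}(u)$.

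There is essentially no serious obstacle here, as the entire content is packaged in the already-established claim 1 of Theorem \ref{thm.nonpositive=continuous.Q=1}. The conceptual punchline is that the singular points constructed in Proposition \ref{prop:counterexample to continuity into nonpositive curved spaces} must necessarily be of a genuinely multi-valued, ``branched'' nature: their tangent maps cannot split into one-valued pieces, since any such piece would be a $0$-homogeneous single-valued minimizer into $\mathbb{T}^2$ and would therefore be trivial, forcing the singular point to be regular.
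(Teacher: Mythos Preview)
Your proof is correct and follows essentially the same route as the paper: both argue by contradiction, decompose a tangent map at a putative regular-singular point into single-valued pieces, and invoke claim~1 from the proof of Theorem~\ref{thm.nonpositive=continuous.Q=1} to force each piece (hence the whole tangent map) to be constant. The only cosmetic difference is that the paper concludes directly from ``tangent maps at singular points are non-constant,'' whereas you spell this out via $\Theta_T(0)=\Theta_u(x_0)=0$ and the $\e$-regularity theorem; these are equivalent.
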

\begin{proof}
It was shown in \emph{step 3} of the previous section that $\sing_{H}(u) \neq \emptyset$ and $\sing_H(u) \Subset B^3$, hence at every point $x \in \sing_H(u)$ a tangent map exists.  Let $T: \R^3 \to \I{2}(\mathbb{T}^2)$ be an arbitrary tangent map at some some $y \in \sing_{H}(u)$. Assume by contradiction that there are "classical" tangent maps $T_1, T_2: \R^3 \to \mathbb{T}^2$ such that 
\[ T = \a{T_1} + \a{T_2}. \]
Each $T_i$ is $0$-homogeneous and locally energy minimizing. Since $\mathbb{T}^2$ is flat each $T_i$ satisfies the assumptions of claim 1 in the proof of Theorem \ref{thm.nonpositive=continuous.Q=1}, hence $T_i$ must be constant. But this contradicts that $T$ is a non-constant tangent map and concludes the proof of the proposition.
\end{proof}

\nocite{*}
\bibliographystyle{aomalpha}
\bibliography{Biblio}

\Addresses

\end{document}